\documentclass[graybox]{SNmult}

\usepackage{algorithm}
\usepackage{algpseudocode}
\usepackage{mathtools}
\usepackage{amsfonts}
\usepackage{amsmath}
\usepackage{amssymb}
\usepackage{dsfont}
\usepackage{setspace}
\usepackage{mathrsfs}
\usepackage{yfonts}
\usepackage[T1]{fontenc}
\usepackage{caption}
\usepackage{subcaption}
\usepackage{tikz}
\usepackage{pgfplots}
\usepackage{placeins}
\usepackage{type1cm}        
\usepackage{makeidx}         
\usepackage{graphicx}        
\usepackage{multicol}        
\usepackage[bottom]{footmisc}

\usepackage{newtxtext}       %
\usepackage[varvw]{newtxmath}       

\definecolor{darkgray}{RGB}{105,105,105}  
\definecolor{skyblue}{RGB}{86,180,233}
\definecolor{bluishgreen}{RGB}{0,158,115}
\definecolor{vermillion}{RGB}{213,94,0}
\definecolor{reddishpurple}{RGB}{204,121,167}
\definecolor{orange}{RGB}{230,159,0}
\definecolor{blue}{RGB}{0,114,178}

\pgfplotscreateplotcyclelist{myCycleList}{
  {darkgray,     mark=*,         solid, thick},
  {skyblue,      mark=square*,   solid, thick},
  {vermillion,   mark=triangle*, solid, thick},
  {bluishgreen,  mark=diamond*,  solid, thick},
  {reddishpurple,mark=otimes*,   solid, thick},
  {blue,         mark=triangle*, solid, thick},
  {orange,       mark=star,      solid, thick},
}

\pgfplotscreateplotcyclelist{myCycleList2}{
  {skyblue,      mark=square*,   solid, thick},
  {vermillion,   mark=triangle*, solid, thick},
  {bluishgreen,  mark=diamond*,  solid, thick},
  {reddishpurple,mark=otimes*,   solid, thick},
  {blue,         mark=triangle*, solid, thick},
  {orange,       mark=star,      solid, thick},
}

\usepackage[backend=biber,style=alphabetic]{biblatex}
\makeindex             

\begin{document}
\title*{Asymptotic and pre-asymptotic convergence of sparse grids for anisotropic kernel interpolation}
\titlerunning{Anisotropic kernel interpolation}
\author{Elliot J. Addy\orcidID{0009-0003-2667-8493} and\\ Aretha L. Teckentrup\orcidID{0000-0001-5089-0476}}
\institute{Elliot J. Addy \at  School of Mathematics and Maxwell Institute for Mathematical Sciences, University of
Edinburgh, King’s Buildings, Edinburgh EH9 3FD, UK \& Biomathematics and Statistics Scotland  \email{e.j.addy@sms.ed.ac.uk}
\and Aretha L. Teckentrup \at School of Mathematics and Maxwell Institute for Mathematical Sciences, University of
Edinburgh, King’s Buildings, Edinburgh EH9 3FD, UK \email{a.teckentrup@ed.ac.uk}}
%
%
\maketitle
\abstract*{Sparse grids are popular tools for high-dimensional function approximation. In this work, we study the use of sparse grids for interpolation using separable Mat\'ern kernels \(\Phi_{\boldsymbol{\nu},\boldsymbol{\lambda}}(\mathbf{x},\mathbf{x}')=\prod_{j=1}^d\phi_{\nu_j,\lambda_j}(x_j,x_j')\), with a particular focus on the anisotropic setting where the regularity $\nu_j$ and the lengthscale $\lambda_j$ vary with dimension $j$. We combine the construction of anisotropic sparse grids, which exploit anisotropic $\nu_j$ to improve convergence rates in smooth dimensions, with the construction of lengthscale-informed sparse grids, which diminish the error contribution of less varying dimensions using anisotropic $\lambda_j$. We provide theory and numerical experiments to showcase the benefits on asymptotic and pre-asymptotic error behaviour of sparse grid kernel interpolation. 
\keywords{high-dimensional approximation $\cdot$ sparse grids $\cdot$ kernel methods}}

\abstract{Sparse grids are popular tools for high-dimensional function approximation. In this work, we study the use of sparse grids for interpolation using separable Mat\'ern kernels \(\Phi_{\boldsymbol{\nu},\boldsymbol{\lambda}}(\mathbf{x},\mathbf{x}')=\prod_{j=1}^d\phi_{\nu_j,\lambda_j}(x_j,x_j')\), with a particular focus on the anisotropic setting where the regularity $\nu_j$ and the lengthscale $\lambda_j$ vary with dimension $j$. We combine the construction of anisotropic sparse grids, which exploit anisotropic $\nu_j$ to improve convergence rates in smooth dimensions, with the construction of lengthscale-informed sparse grids, which diminish the error contribution of less varying dimensions using anisotropic $\lambda_j$. We provide theory and numerical experiments to showcase the benefits on asymptotic and pre-asymptotic error behaviour of sparse grid kernel interpolation. 
\keywords{high-dimensional approximation $\cdot$ sparse grids $\cdot$ kernel methods}}

\section{Introduction}\label{sec: introduction}

Many tasks in modern applied mathematics, including the construction of surrogates for large-scale mathematical models in uncertainty quantification, machine learning, and data-based scientific computing, require efficient approximation of high-dimensional functions. To mitigate the {\em curse of dimensionality}, where the computational effort to achieve a given accuracy increases exponentially with the dimension $d$, structural assumptions have to be imposed on the function $f$ as well as on the methodology used to approximate it. The focus of this work is on the setting where $f$ exhibits an anisotropic structure, manifested by higher regularity and/or less variation in some of the dimensions, and the construction of sparse grid kernel interpolants in this setting.

We will focus on the frequently used separable Mat\'ern kernels \(\Phi_{\boldsymbol{\nu},\boldsymbol{\lambda}}(\mathbf{x},\mathbf{x}')=\prod_{j=1}^d\phi_{\nu_j,\lambda_j}(x_j,x_j')\) as basis functions. Through the regularity parameters $\nu_j$ and lengthscale parameters $\lambda_j$, this easily allows for defining anisotropic structures: higher values of $\nu_j$ represent smoother dependence, and higher values of $\lambda_j$ indicate less variation. Anisotropic sparse grids (ASGs), as studied in  \cite{Nobile2008} for polynomial basis functions, exploit anisotropic $\nu_j$ and hence $j$-dependent convergence rates to slow down the growth rate of number of points placed in the smoother dimensions compared to its isotropic counterpart. Lengthscale-informed sparse grids (LISGs) \cite{addy2025lengthscaleinformedsparsegridskernel,addy2026thesis} in contrast use anisotropic $\lambda_j$ to delay the onset of the growth of number of points placed in the dimensions with less variation compared to its isotropic counterpart. Thus, although both constructions alleviate dimension dependence by placing fewer points in the dimensions that are easier to approximate, the mechanics are fundamentally different.

The main contribution of this work is to combine the construction, and hence the benefits, of anisotropic and lengthscale-informed sparse grids into {\em doubly anisotropic sparse grids} (DASGs). Through Theorem~\ref{thm: doubly anisotropic error}, we show how this construction allows for improved convergence rates in both pre-asymptotic and asymptotic regimes. More precisely, since ASGs slow down the growth rate of the number of points placed in certain dimensions of the sparse grid, this construction is expected to provide faster (and under suitable assumptions dimension independent) asymptotic convergence rates in the number of points $N$ compared to isotropic sparse grids. 
LISGs on the other hand delay the onset of the growth of number of points placed in certain dimensions but asymptotically have the same growth rate as their isotropic counterpart, and hence improve only on the pre-asymptotic behaviour. DASGs inherit the benefits of both. For completeness, we mention here that the pre-asymptotic regime in practice often appears to cover the range of values of $N$ of interest (see e.g \cite{addy2026thesis,addy2025lengthscaleinformedsparsegridskernel,Zech2020Smolyak}). 

Finally, we note that a fast implementation of DASGs is possible adapting the algorithms in \cite{Plumlee2014,addy2025lengthscaleinformedsparsegridskernel}, as presented in section \ref{sec:dasg implementation}.
We further emphasise that DASGs also help mitigate the notoriously ill-conditioned Gram matrices that arise in kernel interpolation, especially when employing large lengthscales or regularities.


\section{Mat\'ern kernel interpolation}\label{sec:matern}

We first introduce the background theory underpinning Mat\'ern kernel interpolation on uniform sparse grid designs. {Without loss of generality}, we consider target functions \(f:\Gamma^d\subset\mathbb{R}^d\rightarrow\mathbb{R}\) restricted to the \(d\)-dimensional unit cube; \(\Gamma=(-1/2,1/2)\).

\begin{definition}[ \cite{Lord_Powell_Shardlow_2014}]\label{def: 1D Matern}
\begin{enumerate}
    \item[(a)]
    For \(\nu,\lambda,\sigma\in\mathbb{R}_{>0}\), the one-dimensional \emph{Mat\'ern kernel}, \(\phi_{\nu,\lambda}:\Gamma\times\Gamma\rightarrow\mathbb{R}_{>0}\), is
    \begin{align}\label{eq:1d_matern}
        \phi_{\nu,\lambda}({x},{x}')\coloneqq\sigma^2\frac{2^{1-\nu}}{{\Gamma(\nu)}}\left(\sqrt{2\nu}\frac{|\,{x}-{x}'|}{\lambda}\right)^{\nu}K_{\nu}\left(\sqrt{2\nu}\frac{|\,{x}-{x}'|}{\lambda}\right),
    \end{align}
    for all \({x},{x}'\in\Gamma\), where \(K_\nu\) is the modified Bessel function of the second kind.\footnote{{In contrast to the notation used in \cite{addy2025lengthscaleinformedsparsegridskernel}, here we define these kernels over the unit cube.}} 
    \item[(b)] For \(d\in\mathbb{N}\) and \(\boldsymbol{\nu}\coloneqq(\nu_1,\dots,\nu_d)\), \(\boldsymbol{\lambda}\coloneqq(\lambda_1,\dots,\lambda_d)\) and \(\boldsymbol{\sigma}\coloneqq(\sigma_1,\dots,\sigma_d)\), we define the \emph{separable Mat\'ern kernel}, \(\Phi_{\boldsymbol{\nu},\boldsymbol{\lambda}}:\Gamma^d\times\Gamma^d\rightarrow\mathbb{R}_{>0}\), as the product of the \(d\)-many one-dimensional kernels, \(\Phi_{\boldsymbol{\nu},\boldsymbol{\lambda}}(\mathbf{x},\mathbf{x}')=\prod_{j=1}^d\phi_{\nu_j,\lambda_j}(x_j,x_j')\), for all \(\mathbf{x},\mathbf{x}'\in\Gamma^d\).
\end{enumerate}
\end{definition}
The parameters \(\nu\), \(\lambda\) and \(\sigma\) represent the regularity, lengthscale and scaling of the kernel, respectively.\footnote{We omit \(\sigma\) from the notation as the corresponding interpolants are independent of it.} Simplified expressions exist for half-integer valued \(\nu\), including the exponential kernel \(\phi_{1/2,\lambda}(x,x')=\sigma^2e^{-|\,x-x'|/\lambda}\), and, in the limit \(\nu\rightarrow\infty\), the squared exponential (or Gaussian) kernel \(\phi_{\infty,\lambda}(x,x')=\sigma^2e^{-|\,x-x'|^2/2\lambda^2}\). Mat\'ern kernels are particularly valuable tools for function approximation as their \emph{native spaces} -- a.~k.~a. \emph{reproducing kernel Hilbert spaces} -- coincide with Sobolev spaces , see e.g. \cite{wendland_2004}.

\begin{proposition}[Corollary 10.13, \cite{wendland_2004} and Theorem 1, Section 8, \cite{Aronszajn1950}]\label{prop: matern native spaces}
\begin{enumerate}
    \item[(a)]
    The native space associated with the Mat\'ern kernel \(\phi_{{\nu},{\lambda}}\), denoted \(\mathcal{N}_{\phi_{\nu,\lambda}}(\Gamma)\), is isomorphic to the standard Sobolev space \(H^{\nu+1/2}(\Gamma)\) ,
    \item[(b)]
    The native space associated with the separable Mat\'ern kernel \(\Phi_{\boldsymbol{\nu},\boldsymbol{\lambda}}\), denoted \(\mathcal{N}_{\Phi_{\boldsymbol{\nu},\boldsymbol{\lambda}}}(\Gamma^d)\), is isomorphic to the Sobolev space of dominating mixed smoothness, \(H_{\textnormal{mix}}^{\boldsymbol{\nu}+\mathbf{1}/2}(\Gamma^d)\coloneqq\widehat{\bigotimes}_{j=1}^d H^{\nu_j+1/2}(\Gamma)\).
\end{enumerate}
\end{proposition}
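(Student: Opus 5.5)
For part (a), I will work through the Fourier transform of the kernel on the whole real line and then restrict to \(\Gamma\). The Matérn kernel is translation invariant, \(\phi_{\nu,\lambda}(x,x')=\Phi(x-x')\). A standard computation (Lord--Powell--Shardlow, or directly from the integral representation of \(K_\nu\)) gives
\[
\widehat{\Phi}(\omega)=c_{\nu,\lambda,\sigma}\Bigl(\tfrac{2\nu}{\lambda^2}+\omega^2\Bigr)^{-(\nu+1/2)}.
\]
Hence there are constants \(0<c_1\le c_2\), depending only on \(\nu,\lambda,\sigma\), such that
\[
c_1(1+\omega^2)^{-(\nu+1/2)}\le\widehat{\Phi}(\omega)\le c_2(1+\omega^2)^{-(\nu+1/2)}.
\]
By Wendland's Theorem 10.12, the native space on \(\mathbb{R}\) consists of the \(f\in L^2\) with \(\widehat f/\sqrt{\widehat\Phi}\in L^2\). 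The two-sided bound therefore shows that \(\mathcal{N}_{\phi_{\nu,\lambda}}(\mathbb{R})=H^{\nu+1/2}(\mathbb{R})\), with equivalent norms.

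The passage to \(\Gamma\) is the step I expect to need the most care. The native space on a subdomain is the space of restrictions, normed by the minimal-norm extension (Wendland, Theorem 10.46). Since \(\Gamma\) is an interval, it is a Lipschitz domain. It therefore admits a bounded Stein extension operator \(H^{s}(\Gamma)\to H^{s}(\mathbb{R})\) for every real \(s\ge0\), including the fractional \(s=\nu+1/2\). Restriction is trivially bounded in the other direction. Combining the two maps gives \(\mathcal{N}_{\phi_{\nu,\lambda}}(\Gamma)\cong H^{\nu+1/2}(\Gamma)\). This is exactly Wendland's Corollary 10.13, so I will simply cite it once I have checked the spectral bound.

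For part (b), I will apply Aronszajn's theorem on products of reproducing kernels. If \(K_1\) on \(E_1\) and \(K_2\) on \(E_2\) are reproducing kernels, then \(K_1(x_1,y_1)K_2(x_2,y_2)\) on \(E_1\times E_2\) is the reproducing kernel of the Hilbert tensor product \(\mathcal H_{K_1}\widehat\otimes\mathcal H_{K_2}\), realised as a space of functions on \(E_1\times E_2\). Inducting over \(j=1,\dots,d\) gives
\[
\mathcal{N}_{\Phi_{\boldsymbol{\nu},\boldsymbol{\lambda}}}(\Gamma^d)=\widehat{\bigotimes}_{j=1}^d\mathcal{N}_{\phi_{\nu_j,\lambda_j}}(\Gamma)
\]
isometrically.

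It remains to transfer the one-dimensional isomorphisms \(T_j:\mathcal N_{\phi_{\nu_j,\lambda_j}}(\Gamma)\to H^{\nu_j+1/2}(\Gamma)\) from part (a) to the tensor products. Each \(T_j\) is the identity map on functions with equivalent norms. The Hilbert tensor product of bounded operators is bounded, with \(\|\otimes_j T_j\|=\prod_j\|T_j\|\), and the same holds for the inverses. So \(\otimes_j T_j\) is an isomorphism onto \(\widehat{\bigotimes}_j H^{\nu_j+1/2}(\Gamma)=H_{\mathrm{mix}}^{\boldsymbol\nu+\mathbf 1/2}(\Gamma^d)\), with constants given by the products of the one-dimensional ones.

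The one subtlety is to check that both tensor products are realised as the same space of functions on \(\Gamma^d\). They are, because each arises as the completion of the algebraic tensor product, and the norms are equivalent on that dense subspace.
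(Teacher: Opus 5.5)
Your proposal is correct and follows the route the paper indicates. The paper gives no proof beyond its two citations: Wendland's Fourier-decay characterisation of native spaces for (a), and Aronszajn's product-kernel theorem for (b). Your argument fills in what those citations leave implicit: the restriction/extension step that passes from the whole-line statement of Corollary 10.13 to \(\Gamma\), and the identification of the two Hilbert tensor products as the same space of functions on \(\Gamma^d\), which is cleanest to justify by noting that point evaluations are continuous in both reproducing kernel Hilbert spaces, so limits of the common dense algebraic tensor product coincide pointwise.
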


To define our kernel interpolants in one dimension, we restrict the interpolation points to uniformly-spaced designs as given in Definition \ref{def:chi_l^p} below and illustrated in Figure~\ref{fig: point sets}. Note that the penalty $p$ can be interpreted as a delayed onset of the growth of points in the point set \(\mathcal{X}_\ell = \mathcal{X}_\ell^0\), see \cite{addy2026thesis} for further details.
\begin{definition}[\cite{addy2025lengthscaleinformedsparsegridskernel}]\label{def:chi_l^p}
    Let \(\ell\in\mathbb{Z}\) and \(p\in\mathbb{N}_0\) be given. Denote by \(\mathcal{X}_\ell\) the set of \(2^{\ell+1}-1\) uniformly spaced points in \(\Gamma\); \(\mathcal{X}_\ell \coloneqq\left\{n/2^{\ell+1}\in\Gamma\,:\,n\in\mathbb{Z}\right\}\). The \(p\)-\emph{penalised point-set}, \(\mathcal{X}_{\ell}^p\subset\Gamma\), is then given by 
    \begin{align}
        \mathcal{X}_{\ell}^p\coloneqq\begin{cases}
            \mathcal{X}_{\ell-p}\quad&\textrm{ for }l\geq p+1,\\
            \mathcal{X}_{0}=\{0\}&\textrm{ for }0\leq \ell\leq p, \textrm{and}\\
            \emptyset&\textrm{ for }\ell<0.\end{cases}\nonumber
    \end{align}
\end{definition}
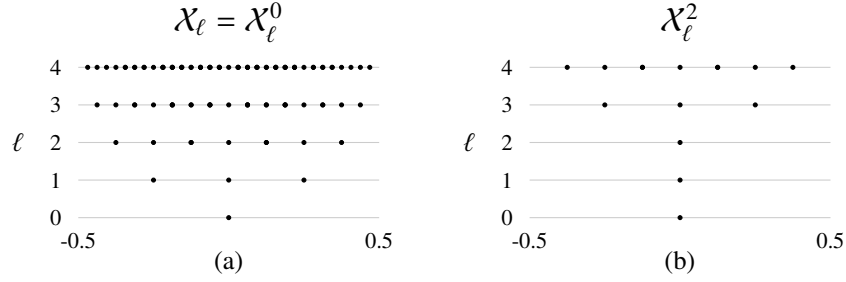
\begin{figure}[h]
    \centering
    \begin{tikzpicture}
        \node[black] at (-2-3,-0.3) {\small-0.5};
        \node[black] at (2-3,-0.3) {\small0.5};
        \node[black] at (-2.8-3,1) {\(\ell\)};
        \node[black] at (-2.3-3,-0) {\footnotesize0};
        \node[black] at (-2.3-3,0.5) {\footnotesize1};
        \node[black] at (-2.3-3,1) {\footnotesize2};
        \node[black] at (-2.3-3,1.5) {\footnotesize3};
        \node[black] at (-2.3-3,2) {\footnotesize4};
        \node[black] at (0-3,0-0.6) {(a)};
        \node[black] at (0-3,0+2.6) {\large\(\mathcal{X}_\ell=\mathcal{X}_\ell^0\)};
        \draw[gray!40] (-2-3,0) -- (2-3,0);
        \draw[gray!40] (-2-3,0.5) -- (2-3,0.5);
        \draw[gray!40] (-2-3,1) -- (2-3,1);
        \draw[gray!40] (-2-3,1.5) -- (2-3,1.5);
        \draw[gray!40] (-2-3,2) -- (2-3,2);
        
        \filldraw[black] (0-3,0) circle (3/4pt);
        \filldraw[black] (0-3,0+0.5) circle (3/4pt);
        \filldraw[black] (0-1-3,0+0.5)  circle (3/4pt);
        \filldraw[black] (0+1-3,0+0.5) circle (3/4pt);
        \filldraw[black] (0-3,0+1) circle (3/4pt);
        \filldraw[black] (0-1-3,0+1) circle (3/4pt);
        \filldraw[black] (0+1-3,0+1) circle (3/4pt);
        \filldraw[black] (0+0.5-3,0+1) circle (3/4pt);
        \filldraw[black] (0-1+0.5-3,0+1) circle (3/4pt);
        \filldraw[black] (0+1+0.5-3,0+1) circle (3/4pt);
        \filldraw[black] (0-0.5-3,0+1) circle (3/4pt);
        \filldraw[black] (0-1-0.5-3,0+1) circle (3/4pt);
        \filldraw[black] (0+1-0.5-3,0+1) circle (3/4pt);
        \filldraw[black] (0-3,0+1.5) circle (3/4pt);
        \filldraw[black] (0-1-3,0+1.5) circle (3/4pt);
        \filldraw[black] (0+1-3,0+1.5) circle (3/4pt);
        \filldraw[black] (0+0.5-3,0+1.5) circle (3/4pt);
        \filldraw[black] (0-1+0.5-3,0+1.5) circle (3/4pt);
        \filldraw[black] (0+1+0.5-3,0+1.5) circle (3/4pt);
        \filldraw[black] (0-0.5-3,0+1.5) circle (3/4pt);
        \filldraw[black] (0-1-0.5-3,0+1.5) circle (3/4pt);
        \filldraw[black] (0+1-0.5-3,0+1.5) circle (3/4pt);
        \filldraw[black] (0-0.25-3,0+1.5) circle (3/4pt);
        \filldraw[black] (0-1-0.25-3,0+1.5) circle (3/4pt);
        \filldraw[black] (0+1-0.25-3,0+1.5) circle (3/4pt);
        \filldraw[black] (0+0.5-0.25-3,0+1.5) circle (3/4pt);
        \filldraw[black] (0-1+0.5-0.25-3,0+1.5) circle (3/4pt);
        \filldraw[black] (0+1+0.5-0.25-3,0+1.5) circle (3/4pt);
        \filldraw[black] (0-0.5-0.25-3,0+1.5) circle (3/4pt);
        \filldraw[black] (0-1-0.5-0.25-3,0+1.5) circle (3/4pt);
        \filldraw[black] (0+1-0.5-0.25-3,0+1.5) circle (3/4pt);
        \filldraw[black] (0+0.25-3,0+1.5) circle (3/4pt);
        \filldraw[black] (0-1+0.25-3,0+1.5) circle (3/4pt);
        \filldraw[black] (0+1+0.25-3,0+1.5) circle (3/4pt);
        \filldraw[black] (0+0.5+0.25-3,0+1.5) circle (3/4pt);
        \filldraw[black] (0-1+0.5+0.25-3,0+1.5) circle (3/4pt);
        \filldraw[black] (0+1+0.5+0.25-3,0+1.5) circle (3/4pt);
        \filldraw[black] (0-0.5+0.25-3,0+1.5) circle (3/4pt);
        \filldraw[black] (0-1-0.5+0.25-3,0+1.5) circle (3/4pt);
        \filldraw[black] (0+1-0.5+0.25-3,0+1.5) circle (3/4pt);
        \filldraw[black] (0-3,0+2) circle (3/4pt);
        \filldraw[black] (0-1-3,0+2) circle (3/4pt);
        \filldraw[black] (0+1-3,0+2) circle (3/4pt);
        \filldraw[black] (0+0.5-3,0+2) circle (3/4pt);
        \filldraw[black] (0-1+0.5-3,0+2) circle (3/4pt);
        \filldraw[black] (0+1+0.5-3,0+2) circle (3/4pt);
        \filldraw[black] (0-0.5-3,0+2) circle (3/4pt);
        \filldraw[black] (0-1-0.5-3,0+2) circle (3/4pt);
        \filldraw[black] (0+1-0.5-3,0+2) circle (3/4pt);
        \filldraw[black] (0-0.25-3,0+2) circle (3/4pt);
        \filldraw[black] (0-1-0.25-3,0+2) circle (3/4pt);
        \filldraw[black] (0+1-0.25-3,0+2) circle (3/4pt);
        \filldraw[black] (0+0.5-0.25-3,0+2) circle (3/4pt);
        \filldraw[black] (0-1+0.5-0.25-3,0+2) circle (3/4pt);
        \filldraw[black] (0+1+0.5-0.25-3,0+2) circle (3/4pt);
        \filldraw[black] (0-0.5-0.25-3,0+2) circle (3/4pt);
        \filldraw[black] (0-1-0.5-0.25-3,0+2) circle (3/4pt);
        \filldraw[black] (0+1-0.5-0.25-3,0+2) circle (3/4pt);
        \filldraw[black] (0+0.25-3,2) circle (3/4pt);
        \filldraw[black] (0-1+0.25-3,2) circle (3/4pt);
        \filldraw[black] (0+1+0.25-3,2) circle (3/4pt);
        \filldraw[black] (0+0.5+0.25-3,2) circle (3/4pt);
        \filldraw[black] (0-1+0.5+0.25-3,0+2) circle (3/4pt);
        \filldraw[black] (0+1+0.5+0.25-3,0+2) circle (3/4pt);
        \filldraw[black] (0-0.5+0.25-3,0+2) circle (3/4pt);
        \filldraw[black] (0-1-0.5+0.25-3,0+2) circle (3/4pt);
        \filldraw[black] (0+1-0.5+0.25-3,0+2) circle (3/4pt);
        \filldraw[black] (0-0.25-0.125-3,0+2) circle (3/4pt);
        \filldraw[black] (0-1-0.25-0.125-3,0+2) circle (3/4pt);
        \filldraw[black] (0+1-0.25-0.125-3,0+2) circle (3/4pt);
        \filldraw[black] (0+0.5-0.25-0.125-3,0+2) circle (3/4pt);
        \filldraw[black] (0-1+0.5-0.25-0.125-3,0+2) circle (3/4pt);
        \filldraw[black] (0+1+0.5-0.25-0.125-3,0+2) circle (3/4pt);
        \filldraw[black] (0-0.5-0.25-0.125-3,0+2) circle (3/4pt);
        \filldraw[black] (0-1-0.5-0.25-0.125-3,0+2) circle (3/4pt);
        \filldraw[black] (0+1-0.5-0.25-0.125-3,0+2) circle (3/4pt);
        \filldraw[black] (0+0.25-0.125-3,0+2) circle (3/4pt);
        \filldraw[black] (0-1+0.25-0.125-3,0+2) circle (3/4pt);
        \filldraw[black] (0+1+0.25-0.125-3,0+2) circle (3/4pt);
        \filldraw[black] (0+0.5+0.25-0.125-3,0+2) circle (3/4pt);
        \filldraw[black] (0-1+0.5+0.25-0.125-3,0+2) circle (3/4pt);
        \filldraw[black] (0+1+0.5+0.25-0.125-3,0+2) circle (3/4pt);
        \filldraw[black] (0-0.5+0.25-0.125-3,0+2) circle (3/4pt);
        \filldraw[black] (0-1-0.5+0.25-0.125-3,0+2) circle (3/4pt);
        \filldraw[black] (0+1-0.5+0.25-0.125-3,0+2) circle (3/4pt);
        \filldraw[black] (0-0.25+0.125-3,0+2) circle (3/4pt);
        \filldraw[black] (0-1-0.25+0.125-3,0+2) circle (3/4pt);
        \filldraw[black] (0+1-0.25+0.125-3,0+2) circle (3/4pt);
        \filldraw[black] (0+0.5-0.25+0.125-3,0+2) circle (3/4pt);
        \filldraw[black] (0-1+0.5-0.25+0.125-3,0+2) circle (3/4pt);
        \filldraw[black] (0+1+0.5-0.25+0.125-3,0+2) circle (3/4pt);
        \filldraw[black] (0-0.5-0.25+0.125-3,0+2) circle (3/4pt);
        \filldraw[black] (0-1-0.5-0.25+0.125-3,0+2) circle (3/4pt);
        \filldraw[black] (0+1-0.5-0.25+0.125-3,0+2) circle (3/4pt);
        \filldraw[black] (0+0.25+0.125-3,0+2) circle (3/4pt);
        \filldraw[black] (0-1+0.25+0.125-3,0+2) circle (3/4pt);
        \filldraw[black] (0+1+0.25+0.125-3,0+2) circle (3/4pt);
        \filldraw[black] (0+0.5+0.25+0.125-3,0+2) circle (3/4pt);
        \filldraw[black] (0-1+0.5+0.25+0.125-3,0+2) circle (3/4pt);
        \filldraw[black] (0+1+0.5+0.25+0.125-3,0+2) circle (3/4pt);
        \filldraw[black] (0-0.5+0.25+0.125-3,0+2) circle (3/4pt);
        \filldraw[black] (0-1-0.5+0.25+0.125-3,0+2) circle (3/4pt);
        \filldraw[black] (0+1-0.5+0.25+0.125-3,0+2) circle (3/4pt);
        
        \node[black] at (-2+3,-0.3) {\small-0.5};
        \node[black] at (2+3,-0.3) {\small0.5};
        \node[black] at (-2.8+3,1) {\(\ell\)};
        \node[black] at (-2.3+3,-0) {\footnotesize0};
        \node[black] at (-2.3+3,0.5) {\footnotesize1};
        \node[black] at (-2.3+3,1) {\footnotesize2};
        \node[black] at (-2.3+3,1.5) {\footnotesize3};
        \node[black] at (-2.3+3,2) {\footnotesize4};
        \node[black] at (0+3,0-0.6) {(b)};
        \node[black] at (0+3,0+2.6) {\large\(\mathcal{X}_\ell^2\)};
        \draw[gray!40] (-2+3,0) -- (2+3,0);
        \draw[gray!40] (-2+3,0.5) -- (2+3,0.5);
        \draw[gray!40] (-2+3,1) -- (2+3,1);
        \draw[gray!40] (-2+3,1.5) -- (2+3,1.5);
        \draw[gray!40] (-2+3,2) -- (2+3,2);
        
        \filldraw[black] (0+3,0) circle (3/4pt);
        \filldraw[black] (0+3,0.5) circle (3/4pt);
        \filldraw[black] (0+3,1) circle (3/4pt);
        \filldraw[black] (0+3,0+1.5) circle (3/4pt);

        \filldraw[black] (0-1+3,0+1.5) circle (3/4pt);
        \filldraw[black] (0+1+3,0+1.5) circle (3/4pt);
        \filldraw[black] (0+3,0+2) circle (3/4pt);
        \filldraw[black] (0-1+3,0+2) circle (3/4pt);
        \filldraw[black] (0+1+3,0+2) circle (3/4pt);
        \filldraw[black] (0+0.5+3,0+2) circle (3/4pt);
        \filldraw[black] (0-1+0.5+3,0+2) circle (3/4pt);
        \filldraw[black] (0+1+0.5+3,0+2) circle (3/4pt);
        \filldraw[black] (0-0.5+3,0+2) circle (3/4pt);
        \filldraw[black] (0-1-0.5+3,0+2) circle (3/4pt);
        \filldraw[black] (0+1-0.5+3,0+2) circle (3/4pt);

    \end{tikzpicture}
    \caption{The nested point-sets \(\mathcal{X}_\ell\), (a), and \(\mathcal{X}_\ell^p\) with penalty \(p=2\), (b), for different levels, \(l\in\mathbb{N}_0\) (This is Figure 2, \cite{addy2025lengthscaleinformedsparsegridskernel}). 
    }
    \label{fig: point sets}
\end{figure}
\begin{definition}\label{def:restricted kernel interpolant}
    Let \(\mathcal{X}_\ell^p=\{{x}_1,\dots,{x}_N\}\subset\Gamma\) and define a linear sampling operator by \(T_{\mathcal{X}_\ell}(f)=f(\mathcal{X}_\ell)\coloneqq[f({x}_1),\dots,f({x}_N)]\). The one-dimensional \emph{Mat\'ern kernel interpolant} of a function \(f:\Gamma\rightarrow\mathbb{R}\), with respect to the point-set \(\mathcal{X}_\ell^p\), is defined by
   \begin{align}
       s_{\mathcal{X}_\ell^p,\phi_{\nu,\lambda}}(f)\coloneqq\underset{\substack{g\in\mathcal{N}_{\phi_{\nu,\lambda}}(\Gamma)\\T_{\mathcal{X}_\ell^p}(g)=T_{\mathcal{X}_\ell^p}(f)}}{\arg\min}\|g\|_{\mathcal{N}_{\phi_{\nu,\lambda}}(\Gamma)}.\nonumber
   \end{align}
\end{definition}
The value of the Mat\'ern kernel interpolant at \(x\in\Gamma\) is given by \(s_{\mathcal{X}_\ell^p,\phi_{\nu,\lambda}}(f)(x)=\mathbf{w}^\top\phi_{\nu,\lambda}(x, \mathcal{X}_{\ell}^p)\), where the coefficient vector \(\mathbf{w}\in\mathbb{R}^N\) is the solution of the linear system \(\phi_{\nu,\lambda}(\mathcal{X}_{\ell}^p,\mathcal{X}_{\ell}^p)\mathbf{w}=f(\mathcal{X}_{\ell}^p)\). Here, by fixing an ordering \(\mathcal{X}_\ell^p=\{x_1,\dots,x_N\}\), we define \(\phi_{\nu,\lambda}(x, \mathcal{X}_{\ell}^p)_i=\phi_{\nu,\lambda}(x, x_i)\) and \(\phi_{\nu,\lambda}(\mathcal{X}_{\ell}^p,\mathcal{X}_{\ell}^p)_{i,j}=\phi_{\nu,\lambda}(x_i,x_j)\). {We note that such systems can become notoriously ill-conditioned as \(N\), \(\nu\) and \(\lambda\) grow (see e.g. \cite{wendland_2004}).}

{Although presented for interpolation in one dimension, Definition \ref{def:restricted kernel interpolant} extends straightforwardly to higher dimensions. Without structural assumptions on \(f\), however, a manifestation of the \emph{curse of dimensionality} results in convergence rates deteriorating with $\frac{1}{d}$ and rapidly renders the methods computationally prohibitive in high dimensions. 


\section{Sparse grid constructions}\label{sec:sparse}
If \(f\) belongs to a Sobolev space of dominating mixed smoothness, one can construct interpolants whose worst-case errors exhibit a significantly weaker dependence on the  dimension \(d\). In particular, in this work we employ sparse grid approximations.
\begin{definition}\label{def: sparse grid operator}
Let \(\mathbf{p}\in\mathbb{N}_0^d\) and \(\boldsymbol{\nu},\boldsymbol{\lambda},\boldsymbol{\sigma}\in\mathbb{R}_0^d\), and let \(\mathcal{I}\subset\mathbb{N}_0^d\) be a finite collection of \emph{multi-indices} \(\boldsymbol{\ell}\in\mathcal{I}\). The corresponding \emph{Mat\'ern kernel sparse grid approximation operator}, based on the uniformly spaced point-sets \(\mathcal{X}_\ell^p\), is given by
\begin{align}
S_{\mathcal{I},\mathbf{p}}^{(\boldsymbol{\nu},\boldsymbol{\lambda})}\coloneqq\sum_{\boldsymbol{\ell}\in\mathcal{I}}\bigotimes_{j=1}^d\left(s_{\mathcal{X}_{\ell_j}^{p_j},\phi_{\nu_j,\lambda_j}}-s_{\mathcal{X}_{\ell_j-1}^{p_j},\phi_{\nu_j,\lambda_j}}\right).\nonumber
\end{align}
\end{definition}
Since the one-dimensional point-sets are nested, i.e. \(\mathcal{X}_\ell^p\subset\mathcal{X}_{\ell-1}^p\), by additionally enforcing that the multi-index set \(\mathcal{I}\) is \emph{downward closed}; that if \(\boldsymbol{\ell}=(l_1,\dots,l_d)\in\mathcal{I}\), and \(\ell_j\geq1\) for some \(1\leq j\leq d\), then \((\ell_1,\dots,\ell_j-1,\dots,\ell_d)\in\mathcal{I}\), we have \cite{Nobile2018}:
\begin{itemize}
    \item The sparse grid approximation is a kernel interpolant with respect to the kernel \(\Phi_{\boldsymbol{\nu},\boldsymbol{\lambda}}\), defined on a sparse grid design given by
    \(
    \mathcal{X}_{\mathcal{I}}^\otimes\coloneqq\sum_{\boldsymbol{\ell}\in\mathcal{I}}\mathcal{X}_{\ell_1}^{p_1}\otimes\cdots\otimes\mathcal{X}_{\ell_d}^{p_d}.
    \)
    \item The approximation error \(I-S_{\mathcal{I},\mathbf{p}}^{(\boldsymbol{\nu},\boldsymbol{\lambda})}\) in the mixed Sobolev norm may be characterised by a sum over the components corresponding to the excluded multi-indices \(\mathbb{N}_0^d\setminus\mathcal{I}\).
\end{itemize}
For the rest of this work, we identify different sparse grid interpolation methodologies with different choices of the multi-index set \(\mathcal{I}\subset\mathbb{N}_0^d\), the penalty vector \(\mathbf{p}\in\mathbb{N}_0^d\), and the kernel hyperparameters \(\boldsymbol{\nu},\boldsymbol{\lambda}\in\mathbb{R}_{>0}^d\).
As the simplest construction considered, we define \emph{isotropic sparse grids} (ISGs) given by \(\mathbf{p}=\mathbf{0}\), \(\boldsymbol{\lambda}=\mathbf{1}\), and, for some \(L \in \mathbb N_0\), 
\begin{align}
\mathcal{I}_{L}\coloneqq\{\boldsymbol{\ell}\in\mathbb{N}_0^d\,:\,|\boldsymbol{\ell}|_1\leq L\}.\label{eq: isotropic index set}
\end{align}
Thus, the level parameter \(L\) determines the number of interpolation points, and all directions \(1\leq j\leq d\) are associated with the same lengthscale and penalty value. An example ISG \(\mathcal{X}^\otimes_{\mathcal{I}_{L}}\) is depicted in two dimensions in Figure~\ref{fig: ISG and LISG}. 
For ISGs, we have the following asymptotic bound on the approximation error.

\begin{theorem}[Theorem 1, \cite{Nobile2018}, as written in Theorem 3.11, \cite{Teckentrup2020}]\label{thm: nobile result}
    Let \(0\leq\alpha_j\leq\nu_j\) and  \(\nu_j\geq1/2\) for all \(1\leq j\leq d\). For large enough \(L\in\mathbb{N}_0\), there exists a constant \(C_{\textnormal{T}\ref{thm: nobile result}}\), independent of \(L\), such that
    \begin{align}
        \left\lVert\, I-S_{\mathcal{I}_L,\mathbf{0}}^{(\boldsymbol{\nu},\mathbf{1})}\,\right\rVert_{H^{\boldsymbol{\nu}+1/2}_{\textnormal{mix}}(\Gamma^d)\rightarrow H^{\boldsymbol{\alpha}+1/2}_{\textnormal{mix}}(\Gamma^d)}&\leq \epsilon^{(d)}_{\boldsymbol{\nu},\boldsymbol{\alpha}}(L)\coloneqq C_{\textnormal{L}\ref{lem: supersets double}}^{(\boldsymbol{\nu},\boldsymbol{\alpha})}\sum_{\boldsymbol{\ell}\in\mathbb{N}_0^k\setminus\mathcal{I}_{L}}2^{-c|\boldsymbol{\ell}|_1}\nonumber,\\
        &\leq C_{\textnormal{T}\ref{thm: nobile result}}N_{L,d}^{-b_{\min}}(\log N_{L,d})^{(1+b_{\min})(d-1)},
    \end{align}
    where \(N_{L, d}\coloneqq|\mathcal{X}^{\otimes}_{\mathcal{I}_L}|\), \(b_{\min}=\min_{1\leq j\leq d}\nu_j-\alpha_j\),  and the constant \(C_{\textnormal{L}\ref{lem: supersets double}}^{(\boldsymbol{\nu},\boldsymbol{\alpha})}\) is given explicitly in Lemma~\ref{lem: supersets double}.
\end{theorem}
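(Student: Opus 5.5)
The plan has two stages. First, bound the operator norm of the error by a sum over the excluded multi-indices. Second, count points and convert the resulting level-dependent bound into a rate in \(N_{L,d}\).

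\textbf{Stage 1: reduction to excluded indices.} Write \(\Delta_{\ell}^{(j)}\coloneqq s_{\mathcal{X}_{\ell},\phi_{\nu_j,1}}-s_{\mathcal{X}_{\ell-1},\phi_{\nu_j,1}}\).
\begin{itemize}
\item Since the one-dimensional interpolants converge to the identity on \(H^{\nu_j+1/2}(\Gamma)\), we have \(I=\sum_{\ell\geq0}\Delta^{(j)}_\ell\) in each direction.
\item Tensorising, \(I=\sum_{\boldsymbol{\ell}\in\mathbb{N}_0^d}\bigotimes_j\Delta^{(j)}_{\ell_j}\) on \(H^{\boldsymbol{\nu}+1/2}_{\textnormal{mix}}(\Gamma^d)\). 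This uses Proposition~\ref{prop: matern native spaces}(b) and the cross-norm property of the Hilbert tensor product.
\item Downward closedness of \(\mathcal{I}_L\) then gives \(I-S_{\mathcal{I}_L,\mathbf{0}}^{(\boldsymbol{\nu},\mathbf{1})}=\sum_{\boldsymbol{\ell}\notin\mathcal{I}_L}\bigotimes_j\Delta^{(j)}_{\ell_j}\).
\item By the triangle inequality, the operator norm is at most \(\sum_{\boldsymbol{\ell}\notin\mathcal{I}_L}\prod_j\|\Delta^{(j)}_{\ell_j}\|_{H^{\nu_j+1/2}\to H^{\alpha_j+1/2}}\), since norms of tensor products of operators between Hilbert tensor products factorise.
\end{itemize}

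\textbf{One-dimensional estimate.} The key input is \(\|\Delta_\ell^{(j)}\|\le C_j2^{-(\nu_j-\alpha_j)\ell}\). To obtain it:
\begin{itemize}
\item Use the sampling inequality / Sobolev error estimate for Mat\'ern interpolation on quasi-uniform points with fill distance \(h_\ell\sim2^{-\ell}\), namely \(\|f-s_{\mathcal{X}_\ell}f\|_{H^{\alpha+1/2}}\le Ch_\ell^{\nu-\alpha}\|f\|_{H^{\nu+1/2}}\) (Wendland, Narcowich–Ward–Wendland).
\item Apply it at levels \(\ell\) and \(\ell-1\) and use the triangle inequality.
\item For \(\ell=0\), use stability of the interpolant instead.
\end{itemize}
With \(c=b_{\min}\) (or per-direction exponents bounded below by \(b_{\min}\)), this yields the first inequality with \(C^{(\boldsymbol{\nu},\boldsymbol{\alpha})}=\prod_jC_j\), matching Lemma~\ref{lem: supersets double}.

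\textbf{Stage 2: sum and count.}
\begin{itemize}
\item Group the excluded indices by \(|\boldsymbol{\ell}|_1=m>L\). There are \(\binom{m+d-1}{d-1}\) of them, so \(\sum_{m>L}\binom{m+d-1}{d-1}2^{-cm}\lesssim 2^{-cL}L^{d-1}\) for large \(L\), since the ratio of consecutive terms tends to \(2^{-c}<1\).
\item The number of points is \(N_{L,d}\le\sum_{|\boldsymbol{\ell}|_1\le L}\prod_j2^{\ell_j+1}\lesssim 2^LL^{d-1}\). Conversely \(N_{L,d}\gtrsim2^L\), so \(L\lesssim\log N_{L,d}\).
\item From \(N\lesssim 2^LL^{d-1}\) we get \(2^{-L}\lesssim N^{-1}L^{d-1}\lesssim N^{-1}(\log N)^{d-1}\).
\item Hence \(2^{-cL}L^{d-1}\lesssim N^{-c}(\log N)^{c(d-1)}(\log N)^{d-1}=N^{-b_{\min}}(\log N)^{(1+b_{\min})(d-1)}\), the stated rate. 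Here ``large enough \(L\)'' absorbs the asymptotic comparisons into the constant.
\end{itemize}

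\textbf{Main obstacle.} I expect the main difficulty to be the uniform one-dimensional difference bound when \(\alpha_j\) is close to \(\nu_j\), i.e.\ the limiting case \(\alpha_j=\nu_j\). There the rate is \(0\), and we need uniform boundedness of \(s_{\mathcal{X}_\ell}\) in \(H^{\nu+1/2}\), which holds by native-space minimality. The delicate step is the case of intermediate \(\alpha\). For this I would first try interpolating between the \(L^2\)-type estimate and native-space stability using the Sobolev sampling inequalities on the bounded interval \(\Gamma\).
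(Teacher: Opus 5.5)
Your argument is correct and follows the standard route behind the cited result. The paper imports this theorem from Nobile et al.\ and Teckentrup without proof, but its Lemma~\ref{lem: tensor hilbert} with $\mathbf{p}=\mathbf{0}$ is precisely your Stage~1 bound (per-direction surpluses $C2^{-(\nu_j-\alpha_j)\ell_j}$ multiplied over the excluded indices), and your Stage~2 count $2^L\lesssim N_{L,d}\lesssim 2^LL^{d-1}$ correctly yields the stated rate. One correction to your closing remark: if $\alpha_j=\nu_j$ for some $j$, then $b_{\min}=0$ and the excluded-index sum (hence the statement's middle quantity) diverges, so uniform boundedness of the one-dimensional interpolants does not rescue that case; the theorem must be read with $b_{\min}>0$, which is all your proof actually uses.
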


The convergence rate of the approximation error now depends on the dimension \(d\) only through a logarithmic factor, and is instead primarily governed by the smallest relative smoothness \(\nu_j-\alpha_j\) across all directions \(j\). Practically, this significantly expands the applicability of the methodology to higher-dimensional problems, {with our numerical experiments indicating suitability up to \(d\approx10\).

{Imposing further structural assumptions on the target function  \(f\) allows for alternative sparse grid constructions whose errors are even less sensitive to the dimension. In particular, a widely exploited structure in the high-dimensional approximation literature is \emph{anisotropy}, whereby \(f\) exhibits greater sensitivity to some parameters than others. As discussed in Section~\ref{sec: introduction}, there exist two main types of anisotropy studied:}
\begin{enumerate}
\item[(a)] Anisotropic regularity, where the number of derivatives of the function, and therefore the asymptotic convergence rate of the error, varies between coordinate directions.
\item[(b)] Weighted spaces, where variation along certain directions is penalised more than others, often through a weighted norm.
\end{enumerate}

In the context of interpolation with Mat\'ern kernels, these two types of anisotropy may be encoded in the hyperparameters \(\boldsymbol{\nu}\) and \(\boldsymbol{\lambda}\), respectively. Along with the use of anisotropic kernels, we may design more efficient sparse grid constructions that reflect this structure, allocating resolution preferentially to directions associated with either (a) lower smoothness or (b) greater variation. In principle, for a perfectly adapted method, it is the degree of the anisotropy present---not the  dimension \(d\)---that determines the computational complexity of the problem. {Additionally, by distributing interpolation points in this manner, such designs mitigate the increased ill-conditioning that would otherwise be induced by using large values of \(\nu\) or \(\lambda\).}

\subsection{Anisotropic sparse grids}\label{subsec: anisotropic sparse grids}

{Anisotropic regularity has been studied extensively for sparse grids in different contexts \cite{bungartz2004sparse,Nobile2008,Rieger2019}}. In Theorem~\ref{thm: nobile result}, we observed that the convergence rate of the error for ISGs is determined by 
the \textit{least smooth} direction. ASGs aim to improve on this by redistributing points toward the less smooth directions, effectively balancing the asymptotic convergence rates across the dimensions. This rebalancing, known as the \textit{anisotropic Smolyak} construction, is achieved by replacing the 1-norm in the construction of the isotropic multi-index set \(\mathcal{I}_{L}\) in \eqref{eq: isotropic index set} with a weighted sum. We define the anisotropic index set associated to the weight vector \(\boldsymbol{\omega}\in\mathbb{R}_{>0}^d\) by
\begin{align}
\mathcal{A}_{L,\boldsymbol{\omega}}\coloneqq\{\boldsymbol{\ell}\in\mathbb{N}_0^d\,:\,\boldsymbol{\omega}\cdot\boldsymbol{\ell}\leq L\}.\label{eq: anisotropic index set}
\end{align}
In Figure~\ref{fig: ASG and DASG}, an example ASG is illustrated in two dimensions with \(\boldsymbol{\omega}=(2,1)\). 

{In \cite{Nobile2008}, ASG approximations employing polynomial basis functions were shown to yield \(L^{\infty}\)-error convergence rates in \(N\) that are independent of the  dimension, \(d\), under suitable assumptions of increasing regularity with respect to the coordinate index \(j\)}. To the best of the authors' knowledge, an analogous asymptotic analysis in \(N\) has not yet been established for kernel interpolation. However, it is expected that the convergence rate will be similarly improved over that of Theorem~\ref{thm: nobile result}. A proof of such a result would require deriving suitable bounds for the operator norm 
\begin{align}
    \left\lVert\, I-S_{\mathcal{A}_{L,\boldsymbol{\omega}},\mathbf{0}}^{(\boldsymbol{\nu},\boldsymbol{\lambda})}\,\right\rVert_{H_{\textrm{mix}}^{\boldsymbol{\nu}}(\Gamma^d)\rightarrow H_{\textrm{mix}}^{\boldsymbol{\alpha}}(\Gamma^d)}&\leq \varepsilon^{(d)}_{\boldsymbol{\nu},\boldsymbol{\alpha},\boldsymbol{\omega}}(L)\coloneqq C_{\textnormal{L}\ref{lem: supersets double}}^{(\boldsymbol{\nu},\boldsymbol{\alpha})}\sum_{\boldsymbol{l}\in\mathbb{N}_0^d\setminus\mathcal{A}_{L,\boldsymbol{\omega}}}2^{-(\boldsymbol{\nu}-\boldsymbol{\alpha})\cdot\boldsymbol{l}},\label{eq: anisotropic bound}
\end{align}
where we define \(\varepsilon^{(d)}_{\boldsymbol{\nu},\boldsymbol{\alpha},\boldsymbol{\omega}}(L)\coloneqq0\) for \(L\leq0\). This formulation was derived in the proof of Theorem 4, \cite{addy2026thesis}. 
Empirical tests in \cite{addy2026thesis} suggest the weighting \(\omega_j=\nu_j-\alpha_j+1\), which we adopt in the numerical experiments in Section~\ref{sec: double experiments}.

\subsection{Lengthscale-informed sparse grids}\label{subsec: LISGs}

\begin{figure}
    \centering
    \begin{tikzpicture}[scale=0.8]


    \draw[gray, thick] (-5,5.5+-2) -- (-5,5.5+2);
    \draw[gray, thick] (-1,5.5+-2) -- (-1,5.5+2);
    \draw[gray, thick] (-5,5.5+2) -- (-1,5.5+2);
    \draw[gray, thick] (-5,5.5+-2) -- (-1,5.5+-2);
    \filldraw[gray, fill opacity = 0.2] (-5,5.5+2) -- (-1,5.5+2) -- (-1,5.5+-2) -- (-5,5.5+-2) -- (-5,5.5+2);
    \filldraw[white] (-4,5.5+0.5) -- (-2,5.5+0.5) -- (-2,5.5+-0.5) -- (-4,5.5+-0.5) -- (-4,5.5+0.5);

    \draw[gray, thick] (5,5.5+-2) -- (5,5.5+2);
    \draw[gray, thick] (1,5.5+-2) -- (1,5.5+2);
    \draw[gray, thick] (5,5.5+2) -- (1,5.5+2);
    \draw[gray, thick] (5,5.5+-2) -- (1,5.5+-2);

    \draw[gray, thick] (-4,5.5+-0.5) -- (-4,5.5+0.5);
    \draw[gray, thick] (-2,5.5+-0.5) -- (-2,5.5+0.5);
    \draw[gray, thick] (-4,5.5+0.5) -- (-2,5.5+0.5);
    \draw[gray, thick] (-4,5.5+-0.5) -- (-2,5.5+-0.5);

    \draw[gray, thick] (-4,5.5+-0.5) -- (1,5.5+-2);
    \draw[gray, thick] (-2,5.5+-0.5) -- (1,5.5+-0.5-0.64285714285);
    \draw[gray, thick, dotted] (1,5.5+-0.5-0.64285714285) -- (5,5.5+-2);
    \draw[gray, thick] (-4,5.5+0.5) -- (1,5.5+2);
    \draw[gray, thick] (-2,5.5+0.5) -- (1,5.5+0.5+0.64285714285);
    \draw[gray, thick,dotted] (1,5.5+0.5+0.64285714285) -- (5,5.5+2);

    
    \filldraw[black] (-3,5.5+0) circle (3/4pt);
    
    \filldraw[gray] (-2,5.5+0) circle (3/4pt);
    \filldraw[gray] (-4,5.5+0) circle (3/4pt);
    \filldraw[black] (-2.5,5.5+0) circle (3/4pt);
    \filldraw[black] (-3.5,5.5+0) circle (3/4pt);
    \filldraw[gray] (-1.5,5.5+0) circle (3/4pt);
    \filldraw[gray] (-4.5,5.5+0) circle (3/4pt);
    \filldraw[black] (-2.25,5.5+0) circle (3/4pt);
    \filldraw[black] (-3.25,5.5+0) circle (3/4pt);
    \filldraw[gray] (-1.25,5.5+0) circle (3/4pt);
    \filldraw[gray] (-4.25,5.5+0) circle (3/4pt);
    \filldraw[black] (-2.75,5.5+0) circle (3/4pt);
    \filldraw[black] (-3.75,5.5+0) circle (3/4pt);
    \filldraw[gray] (-1.75,5.5+0) circle (3/4pt);
    \filldraw[gray] (-4.75,5.5+0) circle (3/4pt);
    \filldraw[black] (-2.25-0.125,5.5+0) circle (3/4pt);
    \filldraw[black] (-3.25-0.125,5.5+0) circle (3/4pt);
    \filldraw[gray] (-1.25-0.125,5.5+0) circle (3/4pt);
    \filldraw[gray] (-4.25-0.125,5.5+0) circle (3/4pt);
    \filldraw[black] (-2.75-0.125,5.5+0) circle (3/4pt);
    \filldraw[black] (-3.75-0.125,5.5+0) circle (3/4pt);
    \filldraw[gray] (-1.75-0.125,5.5+0) circle (3/4pt);
    \filldraw[gray] (-4.75-0.125,5.5+0) circle (3/4pt);
    
    \filldraw[black] (-2.25+0.125,5.5+0) circle (3/4pt);
    \filldraw[black] (-3.25+0.125,5.5+0) circle (3/4pt);
    \filldraw[gray] (-1.25+0.125,5.5+0) circle (3/4pt);
    \filldraw[gray] (-4.25+0.125,5.5+0) circle (3/4pt);
    \filldraw[black] (-2.75+0.125,5.5+0) circle (3/4pt);
    \filldraw[black] (-3.75+0.125,5.5+0) circle (3/4pt);
    \filldraw[gray] (-1.75+0.125,5.5+0) circle (3/4pt);
    \filldraw[gray] (-4.75+0.125,5.5+0) circle (3/4pt);
    \filldraw[black] (-2.25-0.125-0.0625,5.5+0) circle (3/4pt);
    \filldraw[black] (-2.25-0.125+0.0625,5.5+0) circle (3/4pt);
    \filldraw[black] (-3.25-0.125-0.0625,5.5+0) circle (3/4pt);
    \filldraw[black] (-3.25-0.125+0.0625,5.5+0) circle (3/4pt);
    \filldraw[gray] (-1.25-0.125-0.0625,5.5+0) circle (3/4pt);
    \filldraw[gray] (-1.25-0.125+0.0625,5.5+0) circle (3/4pt);
    \filldraw[gray] (-4.25-0.125-0.0625,5.5+0) circle (3/4pt);
    \filldraw[gray] (-4.25-0.125+0.0625,5.5+0) circle (3/4pt);
    \filldraw[black] (-2.75-0.125-0.0625,5.5+0) circle (3/4pt);
    \filldraw[black] (-2.75-0.125+0.0625,5.5+0) circle (3/4pt);
    \filldraw[black] (-3.75-0.125-0.0625,5.5+0) circle (3/4pt);
    \filldraw[black] (-3.75-0.125+0.0625,5.5+0) circle (3/4pt);
    \filldraw[gray] (-1.75-0.125-0.0625,5.5+0) circle (3/4pt);
    \filldraw[gray] (-1.75-0.125+0.0625,5.5+0) circle (3/4pt);
    \filldraw[gray] (-4.75-0.125-0.0625,5.5+0) circle (3/4pt);
    \filldraw[gray] (-4.75-0.125+0.0625,5.5+0) circle (3/4pt);
    
    \filldraw[black] (-2.25+0.125-0.0625,5.5+0) circle (3/4pt);
    \filldraw[black] (-2.25+0.125+0.0625,5.5+0) circle (3/4pt);
    \filldraw[black] (-3.25+0.125-0.0625,5.5+0) circle (3/4pt);
    \filldraw[black] (-3.25+0.125+0.0625,5.5+0) circle (3/4pt);
    \filldraw[gray] (-1.25+0.125-0.0625,5.5+0) circle (3/4pt);
    \filldraw[gray] (-1.25+0.125+0.0625,5.5+0) circle (3/4pt);
    \filldraw[gray] (-4.25+0.125-0.0625,5.5+0) circle (3/4pt);
    \filldraw[gray] (-4.25+0.125+0.0625,5.5+0) circle (3/4pt);
    \filldraw[black] (-2.75+0.125-0.0625,5.5+0) circle (3/4pt);
    \filldraw[black] (-2.75+0.125+0.0625,5.5+0) circle (3/4pt);
    \filldraw[black] (-3.75+0.125-0.0625,5.5+0) circle (3/4pt);
    \filldraw[black] (-3.75+0.125+0.0625,5.5+0) circle (3/4pt);
    \filldraw[gray] (-1.75+0.125-0.0625,5.5+0) circle (3/4pt);
    \filldraw[gray] (-1.75+0.125+0.0625,5.5+0) circle (3/4pt);
    \filldraw[gray] (-4.75+0.125-0.0625,5.5+0) circle (3/4pt);
    \filldraw[gray] (-4.75+0.125+0.0625,5.5+0) circle (3/4pt);

    \filldraw[gray] (-3,5.5+1) circle (3/4pt);
    \filldraw[gray] (-3,5.5+-1) circle (3/4pt);
    \filldraw[gray] (-3,5.5+-0.5) circle (3/4pt);
    \filldraw[gray] (-3,5.5+-1.5) circle (3/4pt);
    \filldraw[gray] (-3,5.5+0.5) circle (3/4pt);
    \filldraw[gray] (-3,5.5+1.5) circle (3/4pt);
    \filldraw[black] (-3,5.5+-0.25) circle (3/4pt);
    \filldraw[black] (-3,5.5+0.25) circle (3/4pt);
    \filldraw[gray] (-3,5.5+1.25) circle (3/4pt);
    \filldraw[gray] (-3,5.5+-1.25) circle (3/4pt);
    \filldraw[gray] (-3,5.5+0.75) circle (3/4pt);
    \filldraw[gray] (-3,5.5+-0.75) circle (3/4pt);
    \filldraw[gray] (-3,5.5+-1.75) circle (3/4pt);
    \filldraw[gray] (-3,5.5+1.75) circle (3/4pt);
    \filldraw[black] (-3,5.5+-0.25-0.125) circle (3/4pt);
    \filldraw[black] (-3,5.5+0.25-0.125) circle (3/4pt);
    \filldraw[gray] (-3,5.5+1.25-0.125) circle (3/4pt);
    \filldraw[gray] (-3,5.5+-1.25-0.125) circle (3/4pt);
    \filldraw[gray] (-3,5.5+0.75-0.125) circle (3/4pt);
    \filldraw[gray] (-3,5.5+-0.75-0.125) circle (3/4pt);
    \filldraw[gray] (-3,5.5+-1.75-0.125) circle (3/4pt);
    \filldraw[gray] (-3,5.5+1.75-0.125) circle (3/4pt);
    
    \filldraw[black] (-3,5.5+-0.25+0.125) circle (3/4pt);
    \filldraw[black] (-3,5.5+0.25+0.125) circle (3/4pt);
    \filldraw[gray] (-3,5.5+1.25+0.125) circle (3/4pt);
    \filldraw[gray] (-3,5.5+-1.25+0.125) circle (3/4pt);
    \filldraw[gray] (-3,5.5+0.75+0.125) circle (3/4pt);
    \filldraw[gray] (-3,5.5+-0.75+0.125) circle (3/4pt);
    \filldraw[gray] (-3,5.5+-1.75+0.125) circle (3/4pt);
    \filldraw[gray] (-3,5.5+1.75+0.125) circle (3/4pt);
    \filldraw[black] (-3,5.5+-0.25-0.125-0.0625) circle (3/4pt);
    \filldraw[black] (-3,5.5+-0.25-0.125+0.0625) circle (3/4pt);
    \filldraw[black] (-3,5.5+0.25-0.125-0.0625) circle (3/4pt);
    \filldraw[black] (-3,5.5+0.25-0.125+0.0625) circle (3/4pt);
    \filldraw[gray] (-3,5.5+1.25-0.125-0.0625) circle (3/4pt);
    \filldraw[gray] (-3,5.5+1.25-0.125+0.0625) circle (3/4pt);
    \filldraw[gray] (-3,5.5+-1.25-0.125-0.0625) circle (3/4pt);
    \filldraw[gray] (-3,5.5+-1.25-0.125+0.0625) circle (3/4pt);
    \filldraw[gray] (-3,5.5+0.75-0.125-0.0625) circle (3/4pt);
    \filldraw[gray] (-3,5.5+0.75-0.125+0.0625) circle (3/4pt);
    \filldraw[gray] (-3,5.5+-0.75-0.125-0.0625) circle (3/4pt);
    \filldraw[gray] (-3,5.5+-0.75-0.125+0.0625) circle (3/4pt);
    \filldraw[gray] (-3,5.5+-1.75-0.125-0.0625) circle (3/4pt);
    \filldraw[gray] (-3,5.5+-1.75-0.125+0.0625) circle (3/4pt);
    \filldraw[gray] (-3,5.5+1.75-0.125-0.0625) circle (3/4pt);
    \filldraw[gray] (-3,5.5+1.75-0.125+0.0625) circle (3/4pt);
    
    \filldraw[black] (-3,5.5+-0.25+0.125-0.0625) circle (3/4pt);
    \filldraw[black] (-3,5.5+-0.25+0.125+0.0625) circle (3/4pt);
    \filldraw[black] (-3,5.5+0.25+0.125-0.0625) circle (3/4pt);
    \filldraw[black] (-3,5.5+0.25+0.125+0.0625) circle (3/4pt);
    \filldraw[gray] (-3,5.5+1.25+0.125-0.0625) circle (3/4pt);
    \filldraw[gray] (-3,5.5+1.25+0.125+0.0625) circle (3/4pt);
    \filldraw[gray] (-3,5.5+-1.25+0.125-0.0625) circle (3/4pt);
    \filldraw[gray] (-3,5.5+-1.25+0.125+0.0625) circle (3/4pt);
    \filldraw[gray] (-3,5.5+0.75+0.125-0.0625) circle (3/4pt);
    \filldraw[gray] (-3,5.5+0.75+0.125+0.0625) circle (3/4pt);
    \filldraw[gray] (-3,5.5+-0.75+0.125-0.0625) circle (3/4pt);
    \filldraw[gray] (-3,5.5+-0.75+0.125+0.0625) circle (3/4pt);
    \filldraw[gray] (-3,5.5+-1.75+0.125-0.0625) circle (3/4pt);
    \filldraw[gray] (-3,5.5+-1.75+0.125+0.0625) circle (3/4pt);
    \filldraw[gray] (-3,5.5+1.75+0.125-0.0625) circle (3/4pt);
    \filldraw[gray] (-3,5.5+1.75+0.125+0.0625) circle (3/4pt);

    \filldraw[gray] (-4,5.5+1) circle (3/4pt);
    \filldraw[gray] (-2,5.5+1) circle (3/4pt);
    \filldraw[gray] (-2,5.5+-1) circle (3/4pt);
    \filldraw[gray] (-4,5.5+-1) circle (3/4pt);

    \filldraw[gray] (1/2 + -4,5.5+0 + 1) circle (3/4pt);
    \filldraw[gray] (-1/2 + -4,5.5+0 + 1) circle (3/4pt);
    \filldraw[gray] (-0.5/2 + -4,5.5+0 + 1) circle (3/4pt);
    \filldraw[gray] (-1.5/2 + -4,5.5+0 + 1) circle (3/4pt);
    \filldraw[gray] (1.5/2 + -4,5.5+0 + 1) circle (3/4pt);
    \filldraw[gray] (0.5/2 + -4,5.5+0 + 1) circle (3/4pt);
    \filldraw[gray] (-0.5/2-0.5/4 + -4,5.5+0 + 1) circle (3/4pt);
    \filldraw[gray] (-1.5/2-0.5/4 + -4,5.5+0 + 1) circle (3/4pt);
    \filldraw[gray] (1.5/2-0.5/4 + -4,5.5+0 + 1) circle (3/4pt);
    \filldraw[gray] (0.5/2-0.5/4 + -4,5.5+0 + 1) circle (3/4pt);

    \filldraw[gray] (-0.5/2+0.5/4 + -4,5.5+0 + 1) circle (3/4pt);
    \filldraw[gray] (-1.5/2+0.5/4 + -4,5.5+0 + 1) circle (3/4pt);
    \filldraw[gray] (1.5/2+0.5/4 + -4,5.5+0 + 1) circle (3/4pt);
    \filldraw[gray] (0.5/2+0.5/4 + -4,5.5+0 + 1) circle (3/4pt);
    \filldraw[gray] (0 + -4,5.5+1/2 + 1) circle (3/4pt);
    \filldraw[gray] (0 + -4,5.5+-1/2 + 1) circle (3/4pt);
    \filldraw[gray] (0 + -4,5.5+-0.5/2 + 1) circle (3/4pt);
    \filldraw[gray] (0 + -4,5.5+-1.5/2 + 1) circle (3/4pt);
    \filldraw[gray] (0 + -4,5.5+1.5/2 + 1) circle (3/4pt);
    \filldraw[gray] (0 + -4,5.5+0.5/2 + 1) circle (3/4pt);
    \filldraw[gray] (0 + -4,5.5+-0.5/2-0.5/4 + 1) circle (3/4pt);
    \filldraw[gray] (0 + -4,5.5+-1.5/2-0.5/4 + 1) circle (3/4pt);
    \filldraw[gray] (0 + -4,5.5+1.5/2-0.5/4 + 1) circle (3/4pt);
    \filldraw[gray] (0 + -4,5.5+0.5/2-0.5/4 + 1) circle (3/4pt);
    
    \filldraw[gray] (0 + -4,5.5+-0.5/2+0.5/4 + 1) circle (3/4pt);
    \filldraw[gray] (0 + -4,5.5+-1.5/2+0.5/4 + 1) circle (3/4pt);
    \filldraw[gray] (0 + -4,5.5+1.5/2+0.5/4 + 1) circle (3/4pt);
    \filldraw[gray] (0 + -4,5.5+0.5/2+0.5/4 + 1) circle (3/4pt);

    \filldraw[gray] (1/2 + -2,5.5+0 + 1) circle (3/4pt);
    \filldraw[gray] (-1/2 + -2,5.5+0 + 1) circle (3/4pt);
    \filldraw[gray] (-0.5/2 + -2,5.5+0 + 1) circle (3/4pt);
    \filldraw[gray] (-1.5/2 + -2,5.5+0 + 1) circle (3/4pt);
    \filldraw[gray] (1.5/2 + -2,5.5+0 + 1) circle (3/4pt);
    \filldraw[gray] (0.5/2 + -2,5.5+0 + 1) circle (3/4pt);
    \filldraw[gray] (-0.5/2+0.5/4 + -2,5.5+0 + 1) circle (3/4pt);
    \filldraw[gray] (-1.5/2+0.5/4 + -2,5.5+0 + 1) circle (3/4pt);
    \filldraw[gray] (1.5/2+0.5/4 + -2,5.5+0 + 1) circle (3/4pt);
    \filldraw[gray] (0.5/2+0.5/4 + -2,5.5+0 + 1) circle (3/4pt);

    \filldraw[gray] (-0.5/2-0.5/4 + -2,5.5+0 + 1) circle (3/4pt);
    \filldraw[gray] (-1.5/2-0.5/4 + -2,5.5+0 + 1) circle (3/4pt);
    \filldraw[gray] (1.5/2-0.5/4 + -2,5.5+0 + 1) circle (3/4pt);
    \filldraw[gray] (0.5/2-0.5/4 + -2,5.5+0 + 1) circle (3/4pt);
    \filldraw[gray] (0 + -2,5.5+1/2 + 1) circle (3/4pt);
    \filldraw[gray] (0 + -2,5.5+-1/2 + 1) circle (3/4pt);
    \filldraw[gray] (0 + -2,5.5+-0.5/2 + 1) circle (3/4pt);
    \filldraw[gray] (0 + -2,5.5+-1.5/2 + 1) circle (3/4pt);
    \filldraw[gray] (0 + -2,5.5+1.5/2 + 1) circle (3/4pt);
    \filldraw[gray] (0 + -2,5.5+0.5/2 + 1) circle (3/4pt);
    \filldraw[gray] (0 + -2,5.5+-0.5/2-0.5/4 + 1) circle (3/4pt);
    \filldraw[gray] (0 + -2,5.5+-1.5/2-0.5/4 + 1) circle (3/4pt);
    \filldraw[gray] (0 + -2,5.5+1.5/2-0.5/4 + 1) circle (3/4pt);
    \filldraw[gray] (0 + -2,5.5+0.5/2-0.5/4 + 1) circle (3/4pt);

    \filldraw[gray] (0 + -2,5.5+-0.5/2+0.5/4 + 1) circle (3/4pt);
    \filldraw[gray] (0 + -2,5.5+-1.5/2+0.5/4 + 1) circle (3/4pt);
    \filldraw[gray] (0 + -2,5.5+1.5/2+0.5/4 + 1) circle (3/4pt);
    \filldraw[gray] (0 + -2,5.5+0.5/2+0.5/4 + 1) circle (3/4pt);

    \filldraw[gray] (1/2 + -2,5.5+0 - 1) circle (3/4pt);
    \filldraw[gray] (-1/2 + -2,5.5+0 - 1) circle (3/4pt);
    \filldraw[gray] (-0.5/2 + -2,5.5+0 - 1) circle (3/4pt);
    \filldraw[gray] (-1.5/2 + -2,5.5+0 - 1) circle (3/4pt);
    \filldraw[gray] (1.5/2 + -2,5.5+0 - 1) circle (3/4pt);
    \filldraw[gray] (0.5/2 + -2,5.5+0 - 1) circle (3/4pt);
    \filldraw[gray] (-0.5/2+0.5/4 + -2,5.5+0 - 1) circle (3/4pt);
    \filldraw[gray] (-1.5/2+0.5/4 + -2,5.5+0 - 1) circle (3/4pt);
    \filldraw[gray] (1.5/2+0.5/4 + -2,5.5+0 - 1) circle (3/4pt);
    \filldraw[gray] (0.5/2+0.5/4 + -2,5.5+0 - 1) circle (3/4pt);

    \filldraw[gray] (-0.5/2-0.5/4 + -2,5.5+0 - 1) circle (3/4pt);
    \filldraw[gray] (-1.5/2-0.5/4 + -2,5.5+0 - 1) circle (3/4pt);
    \filldraw[gray] (1.5/2-0.5/4 + -2,5.5+0 - 1) circle (3/4pt);
    \filldraw[gray] (0.5/2-0.5/4 + -2,5.5+0 - 1) circle (3/4pt);
    \filldraw[gray] (0 + -2,5.5+1/2 - 1) circle (3/4pt);
    \filldraw[gray] (0 + -2,5.5+-1/2 - 1) circle (3/4pt);
    \filldraw[gray] (0 + -2,5.5+-0.5/2 - 1) circle (3/4pt);
    \filldraw[gray] (0 + -2,5.5+-1.5/2 - 1) circle (3/4pt);
    \filldraw[gray] (0 + -2,5.5+1.5/2 - 1) circle (3/4pt);
    \filldraw[gray] (0 + -2,5.5+0.5/2 - 1) circle (3/4pt);
    \filldraw[gray] (0 + -2,5.5+-0.5/2+0.5/4 - 1) circle (3/4pt);
    \filldraw[gray] (0 + -2,5.5+-1.5/2+0.5/4 - 1) circle (3/4pt);
    \filldraw[gray] (0 + -2,5.5+1.5/2+0.5/4 - 1) circle (3/4pt);
    \filldraw[gray] (0 + -2,5.5+0.5/2+0.5/4 - 1) circle (3/4pt);
    
    \filldraw[gray] (0 + -2,5.5+-0.5/2-0.5/4 - 1) circle (3/4pt);
    \filldraw[gray] (0 + -2,5.5+-1.5/2-0.5/4 - 1) circle (3/4pt);
    \filldraw[gray] (0 + -2,5.5+1.5/2-0.5/4 - 1) circle (3/4pt);
    \filldraw[gray] (0 + -2,5.5+0.5/2-0.5/4 - 1) circle (3/4pt);

    \filldraw[gray] (1/2 + -4,5.5+0 - 1) circle (3/4pt);
    \filldraw[gray] (-1/2 + -4,5.5+0 - 1) circle (3/4pt);
    \filldraw[gray] (-0.5/2 + -4,5.5+0 - 1) circle (3/4pt);
    \filldraw[gray] (-1.5/2 + -4,5.5+0 - 1) circle (3/4pt);
    \filldraw[gray] (1.5/2 + -4,5.5+0 - 1) circle (3/4pt);
    \filldraw[gray] (0.5/2 + -4,5.5+0 - 1) circle (3/4pt);
    \filldraw[gray] (-0.5/2-0.5/4 + -4,5.5+0 - 1) circle (3/4pt);
    \filldraw[gray] (-1.5/2-0.5/4 + -4,5.5+0 - 1) circle (3/4pt);
    \filldraw[gray] (1.5/2-0.5/4 + -4,5.5+0 - 1) circle (3/4pt);
    \filldraw[gray] (0.5/2-0.5/4 + -4,5.5+0 - 1) circle (3/4pt);

    \filldraw[gray] (-0.5/2+0.5/4 + -4,5.5+0 - 1) circle (3/4pt);
    \filldraw[gray] (-1.5/2+0.5/4 + -4,5.5+0 - 1) circle (3/4pt);
    \filldraw[gray] (1.5/2+0.5/4 + -4,5.5+0 - 1) circle (3/4pt);
    \filldraw[gray] (0.5/2+0.5/4 + -4,5.5+0 - 1) circle (3/4pt);
    \filldraw[gray] (0 + -4,5.5+1/2 - 1) circle (3/4pt);
    \filldraw[gray] (0 + -4,5.5+-1/2 - 1) circle (3/4pt);
    \filldraw[gray] (0 + -4,5.5+-0.5/2 - 1) circle (3/4pt);
    \filldraw[gray] (0 + -4,5.5+-1.5/2 - 1) circle (3/4pt);
    \filldraw[gray] (0 + -4,5.5+1.5/2 - 1) circle (3/4pt);
    \filldraw[gray] (0 + -4,5.5+0.5/2 - 1) circle (3/4pt);
    \filldraw[gray] (0 + -4,5.5+-0.5/2+0.5/4 - 1) circle (3/4pt);
    \filldraw[gray] (0 + -4,5.5+-1.5/2+0.5/4 - 1) circle (3/4pt);
    \filldraw[gray] (0 + -4,5.5+1.5/2+0.5/4 - 1) circle (3/4pt);
    \filldraw[gray] (0 + -4,5.5+0.5/2+0.5/4 - 1) circle (3/4pt);

    \filldraw[gray] (0 + -4,5.5+-0.5/2-0.5/4 - 1) circle (3/4pt);
    \filldraw[gray] (0 + -4,5.5+-1.5/2-0.5/4 - 1) circle (3/4pt);
    \filldraw[gray] (0 + -4,5.5+1.5/2-0.5/4 - 1) circle (3/4pt);
    \filldraw[gray] (0 + -4,5.5+0.5/2-0.5/4 - 1) circle (3/4pt);


    \filldraw[gray] (-4 + 0.5,5.5+1+ 0.5) circle (3/4pt);
    \filldraw[gray] (-2 + 0.5,5.5+1 + 0.5) circle (3/4pt);
    \filldraw[gray] (-2 + 0.5,5.5+-1 + 0.5) circle (3/4pt);
    \filldraw[gray] (-4 + 0.5,5.5+-1 + 0.5) circle (3/4pt);

    \filldraw[gray] (-4 + 0.5,5.5+1 - 0.5) circle (3/4pt);
    \filldraw[gray] (-2 + 0.5,5.5+1 - 0.5) circle (3/4pt);
    \filldraw[gray] (-2 + 0.5,5.5+-1 - 0.5) circle (3/4pt);
    \filldraw[gray] (-4 + 0.5,5.5+-1 - 0.5) circle (3/4pt);

    \filldraw[gray] (-4 - 0.5,5.5+1 + 0.5) circle (3/4pt);
    \filldraw[gray] (-2 - 0.5,5.5+1 + 0.5) circle (3/4pt);
    \filldraw[gray] (-2 - 0.5,5.5+-1 + 0.5) circle (3/4pt);
    \filldraw[gray] (-4 - 0.5,5.5+-1 + 0.5) circle (3/4pt);
    
    \filldraw[gray] (-4 - 0.5,5.5+1 - 0.5) circle (3/4pt);
    \filldraw[gray] (-2 - 0.5,5.5+1 - 0.5) circle (3/4pt);
    \filldraw[gray] (-2 - 0.5,5.5+-1 - 0.5) circle (3/4pt);
    \filldraw[gray] (-4 - 0.5,5.5+-1 - 0.5) circle (3/4pt);

    \filldraw[gray] (-4 + 0.5 + 0.5/2,5.5+1+ 0.5) circle (3/4pt);
    \filldraw[gray] (-2 + 0.5 + 0.5/2,5.5+1 + 0.5) circle (3/4pt);
    \filldraw[gray] (-2 + 0.5 + 0.5/2,5.5+-1 + 0.5) circle (3/4pt);
    \filldraw[gray] (-4 + 0.5 + 0.5/2,5.5+-1 + 0.5) circle (3/4pt);

    \filldraw[gray] (-4 + 0.5 + 0.5/2,5.5+1 - 0.5) circle (3/4pt);
    \filldraw[gray] (-2 + 0.5 + 0.5/2,5.5+1 - 0.5) circle (3/4pt);
    \filldraw[gray] (-2 + 0.5 + 0.5/2,5.5+-1 - 0.5) circle (3/4pt);
    \filldraw[gray] (-4 + 0.5 + 0.5/2,5.5+-1 - 0.5) circle (3/4pt);

    \filldraw[gray] (-4 - 0.5 + 0.5/2,5.5+1 + 0.5) circle (3/4pt);
    \filldraw[gray] (-2 - 0.5 + 0.5/2,5.5+1 + 0.5) circle (3/4pt);
    \filldraw[gray] (-2 - 0.5 + 0.5/2,5.5+-1 + 0.5) circle (3/4pt);
    \filldraw[gray] (-4 - 0.5 + 0.5/2,5.5+-1 + 0.5) circle (3/4pt);
    
    \filldraw[gray] (-4 - 0.5 + 0.5/2,5.5+1 - 0.5) circle (3/4pt);
    \filldraw[gray] (-2 - 0.5 + 0.5/2,5.5+1 - 0.5) circle (3/4pt);
    \filldraw[gray] (-2 - 0.5 + 0.5/2,5.5+-1 - 0.5) circle (3/4pt);
    \filldraw[gray] (-4 - 0.5 + 0.5/2,5.5+-1 - 0.5) circle (3/4pt);
    \filldraw[gray] (-4 + 0.5 - 0.5/2,5.5+1+ 0.5) circle (3/4pt);
    \filldraw[gray] (-2 + 0.5 - 0.5/2,5.5+1 + 0.5) circle (3/4pt);
    \filldraw[gray] (-2 + 0.5 - 0.5/2,5.5+-1 + 0.5) circle (3/4pt);
    \filldraw[gray] (-4 + 0.5 - 0.5/2,5.5+-1 + 0.5) circle (3/4pt);

    \filldraw[gray] (-4 + 0.5 - 0.5/2,5.5+1 - 0.5) circle (3/4pt);
    \filldraw[gray] (-2 + 0.5 - 0.5/2,5.5+1 - 0.5) circle (3/4pt);
    \filldraw[gray] (-2 + 0.5 - 0.5/2,5.5+-1 - 0.5) circle (3/4pt);
    \filldraw[gray] (-4 + 0.5 - 0.5/2,5.5+-1 - 0.5) circle (3/4pt);

    \filldraw[gray] (-4 - 0.5 - 0.5/2,5.5+1 + 0.5) circle (3/4pt);
    \filldraw[gray] (-2 - 0.5 - 0.5/2,5.5+1 + 0.5) circle (3/4pt);
    \filldraw[gray] (-2 - 0.5 - 0.5/2,5.5+-1 + 0.5) circle (3/4pt);
    \filldraw[gray] (-4 - 0.5 - 0.5/2,5.5+-1 + 0.5) circle (3/4pt);
    
    \filldraw[gray] (-4 - 0.5 - 0.5/2,5.5+1 - 0.5) circle (3/4pt);
    \filldraw[gray] (-2 - 0.5 - 0.5/2,5.5+1 - 0.5) circle (3/4pt);
    \filldraw[gray] (-2 - 0.5 - 0.5/2,5.5+-1 - 0.5) circle (3/4pt);
    \filldraw[gray] (-4 - 0.5 - 0.5/2,5.5+-1 - 0.5) circle (3/4pt);

    \filldraw[gray] (-4 + 0.5,5.5+1+ 0.5 + 0.5/2) circle (3/4pt);
    \filldraw[gray] (-2 + 0.5,5.5+1 + 0.5 + 0.5/2) circle (3/4pt);
    \filldraw[gray] (-2 + 0.5,5.5+-1 + 0.5 + 0.5/2) circle (3/4pt);
    \filldraw[black] (-4 + 0.5,5.5+-1 + 0.5 + 0.5/2) circle (3/4pt);

    \filldraw[gray] (-4 + 0.5,5.5+1 - 0.5 + 0.5/2) circle (3/4pt);
    \filldraw[gray] (-2 + 0.5,5.5+1 - 0.5 + 0.5/2) circle (3/4pt);
    \filldraw[gray] (-2 + 0.5,5.5+-1 - 0.5 + 0.5/2) circle (3/4pt);
    \filldraw[gray] (-4 + 0.5,5.5+-1 - 0.5 + 0.5/2) circle (3/4pt);

    \filldraw[gray] (-4 - 0.5,5.5+1 + 0.5 + 0.5/2) circle (3/4pt);
    \filldraw[gray] (-2 - 0.5,5.5+1 + 0.5 + 0.5/2) circle (3/4pt);
    \filldraw[black] (-2 - 0.5,5.5+-1 + 0.5 + 0.5/2) circle (3/4pt);
    \filldraw[gray] (-4 - 0.5,5.5+-1 + 0.5 + 0.5/2) circle (3/4pt);
    
    \filldraw[gray] (-4 - 0.5,5.5+1 - 0.5 + 0.5/2) circle (3/4pt);
    \filldraw[gray] (-2 - 0.5,5.5+1 - 0.5 + 0.5/2) circle (3/4pt);
    \filldraw[gray] (-2 - 0.5,5.5+-1 - 0.5 + 0.5/2) circle (3/4pt);
    \filldraw[gray] (-4 - 0.5,5.5+-1 - 0.5 + 0.5/2) circle (3/4pt);
    \filldraw[gray] (-4 + 0.5,5.5+1+ 0.5 - 0.5/2) circle (3/4pt);
    \filldraw[gray] (-2 + 0.5,5.5+1 + 0.5 - 0.5/2) circle (3/4pt);
    \filldraw[gray] (-2 + 0.5,5.5+-1 + 0.5 - 0.5/2) circle (3/4pt);
    \filldraw[gray] (-4 + 0.5,5.5+-1 + 0.5 - 0.5/2) circle (3/4pt);

    \filldraw[black] (-4 + 0.5,5.5+1 - 0.5 - 0.5/2) circle (3/4pt);
    \filldraw[gray] (-2 + 0.5,5.5+1 - 0.5 - 0.5/2) circle (3/4pt);
    \filldraw[gray] (-2 + 0.5,5.5+-1 - 0.5 - 0.5/2) circle (3/4pt);
    \filldraw[gray] (-4 + 0.5,5.5+-1 - 0.5 - 0.5/2) circle (3/4pt);

    \filldraw[gray] (-4 - 0.5,5.5+1 + 0.5 - 0.5/2) circle (3/4pt);
    \filldraw[gray] (-2 - 0.5,5.5+1 + 0.5 - 0.5/2) circle (3/4pt);
    \filldraw[gray] (-2 - 0.5,5.5+-1 + 0.5 - 0.5/2) circle (3/4pt);
    \filldraw[gray] (-4 - 0.5,5.5+-1 + 0.5 - 0.5/2) circle (3/4pt);
    
    \filldraw[gray] (-4 - 0.5,5.5+1 - 0.5 - 0.5/2) circle (3/4pt);
    \filldraw[black] (-2 - 0.5,5.5+1 - 0.5 - 0.5/2) circle (3/4pt);
    \filldraw[gray] (-2 - 0.5,5.5+-1 - 0.5 - 0.5/2) circle (3/4pt);
    \filldraw[gray] (-4 - 0.5,5.5+-1 - 0.5 - 0.5/2) circle (3/4pt);

    \filldraw[black] (3,5.5+0) circle (3/4pt);
    
    \filldraw[black] (2,5.5+0) circle (3/4pt);
    \filldraw[black] (4,5.5+0) circle (3/4pt);
    \filldraw[black] (2.5,5.5+0) circle (3/4pt);
    \filldraw[black] (3.5,5.5+0) circle (3/4pt);
    \filldraw[black] (1.5,5.5+0) circle (3/4pt);
    \filldraw[black] (4.5,5.5+0) circle (3/4pt);
    \filldraw[black] (2.25,5.5+0) circle (3/4pt);
    \filldraw[black] (3.25,5.5+0) circle (3/4pt);
    \filldraw[black] (1.25,5.5+0) circle (3/4pt);
    \filldraw[black] (4.25,5.5+0) circle (3/4pt);
    \filldraw[black] (2.75,5.5+0) circle (3/4pt);
    \filldraw[black] (3.75,5.5+0) circle (3/4pt);
    \filldraw[black] (1.75,5.5+0) circle (3/4pt);
    \filldraw[black] (4.75,5.5+0) circle (3/4pt);
    \filldraw[black] (2.25+0.5/4,5.5+0) circle (3/4pt);
    \filldraw[black] (3.25+0.5/4,5.5+0) circle (3/4pt);
    \filldraw[black] (1.25+0.5/4,5.5+0) circle (3/4pt);
    \filldraw[black] (4.25+0.5/4,5.5+0) circle (3/4pt);
    \filldraw[black] (2.75+0.5/4,5.5+0) circle (3/4pt);
    \filldraw[black] (3.75+0.5/4,5.5+0) circle (3/4pt);
    \filldraw[black] (1.75+0.5/4,5.5+0) circle (3/4pt);
    \filldraw[black] (4.75+0.5/4,5.5+0) circle (3/4pt);
    
    \filldraw[black] (2.25-0.5/4,5.5+0) circle (3/4pt);
    \filldraw[black] (3.25-0.5/4,5.5+0) circle (3/4pt);
    \filldraw[black] (1.25-0.5/4,5.5+0) circle (3/4pt);
    \filldraw[black] (4.25-0.5/4,5.5+0) circle (3/4pt);
    \filldraw[black] (2.75-0.5/4,5.5+0) circle (3/4pt);
    \filldraw[black] (3.75-0.5/4,5.5+0) circle (3/4pt);
    \filldraw[black] (1.75-0.5/4,5.5+0) circle (3/4pt);
    \filldraw[black] (4.75-0.5/4,5.5+0) circle (3/4pt);

    \filldraw[black] (3,5.5+1) circle (3/4pt);
    \filldraw[black] (3,5.5+-1) circle (3/4pt);
    \filldraw[black] (3,5.5+-0.5) circle (3/4pt);
    \filldraw[black] (3,5.5+-1.5) circle (3/4pt);
    \filldraw[black] (3,5.5+0.5) circle (3/4pt);
    \filldraw[black] (3,5.5+1.5) circle (3/4pt);
    \filldraw[black] (3,5.5+-0.5+0.5/2) circle (3/4pt);
    \filldraw[black] (3,5.5+-1.5+0.5/2) circle (3/4pt);
    \filldraw[black] (3,5.5+0.5+0.5/2) circle (3/4pt);
    \filldraw[black] (3,5.5+1.5+0.5/2) circle (3/4pt);
    \filldraw[black] (3,5.5+-0.5-0.5/2) circle (3/4pt);
    \filldraw[black] (3,5.5+-1.5-0.5/2) circle (3/4pt);
    \filldraw[black] (3,5.5+0.5-0.5/2) circle (3/4pt);
    \filldraw[black] (3,5.5+1.5-0.5/2) circle (3/4pt);

    \filldraw[black] (4,5.5+1) circle (3/4pt);
    \filldraw[black] (2,5.5+-1) circle (3/4pt);
    \filldraw[black] (2,5.5+1) circle (3/4pt);
    \filldraw[black] (4,5.5-1) circle (3/4pt);


    \node[black] at (-5,5.5+-2.3) {\small-0.5};
    \node[black] at (-1,5.5+-2.3) {\small0.5};
    \node[black] at (5,5.5+-2.3) {\small0.5};
    \node[black] at (1,5.5+-2.3) {\small-0.5};

    \node[black] at (-5.5,5.5+-2) {\small-0.5};
    \node[black] at (-5.5,5.5+2) {\small0.5};
    \node[black] at (5.5,5.5+-2) {\small-0.5};
    \node[black] at (5.5,5.5+2) {\small0.5};

    \node[black] at (-3,5.5+-2.5) {\Large\(x_1\)};
    \node[black] at (-5.5,5.5+0) {\Large\(x_2\)};
    \node[black] at (3,5.5+-2.5) {\Large\(x_1\)};
    \node[black] at (5.5,5.5+0) {\Large\(x_2\)};

    \node[black] at (-3,8.8) {Isotropic sparse grid};
    \node[black] at (-3,8.2) {\(\mathbf{p}=(0,0)\)};
    \node[black] at (3,8.8) {Lengthscale-informed};
    \node at (3,8.2) {\(\mathbf{p}=(1,2)\)};


\end{tikzpicture}
\caption{Isotropic and lengthscale-informed sparse grids in two dimensions of level \(L=5\), each corresponding to penalties \(\mathbf{p}=(0,0)\) and \(\mathbf{p}=(1,2)\), respectively.}\label{fig: ISG and LISG}
\end{figure}
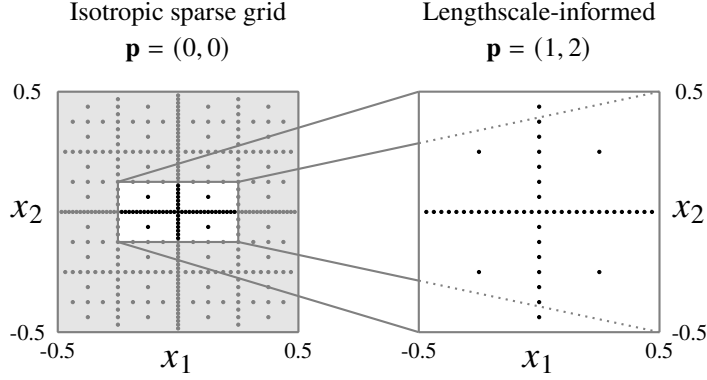

In \cite{addy2025lengthscaleinformedsparsegridskernel}, LISGs were developed to exploit \emph{lengthscale anisotropy}. A function \(f\in H_{\textnormal{mix}}^{\boldsymbol{\nu}+\mathbf{1}/2}(\Gamma^d)\) is considered to exhibit lengthscale anisotropy according to \(\boldsymbol{\lambda}\in\mathbb{R}_{>0}^d\) if
\begin{align}
    \left\lVert f \right\rVert_{\mathcal{N}_{\Phi_{\boldsymbol{\nu},\boldsymbol{\lambda}}}(\Gamma^d)}\leq C_{f}^{(\boldsymbol{\nu},\boldsymbol{\lambda})},\label{eq: lengthscale anisotropy assumption}
\end{align}
for some sufficiently small \(C_{f}^{(\boldsymbol{\nu},\boldsymbol{\lambda})}\in\mathbb{R}_{>0}\). In contexts where the input dimension of \(f\) may be chosen to be arbitrarily large---often by truncating an underlying infinite-dimensional parameter space to meet a prescribed accuracy---it is particularly desirable to identify a sequence \(\{\lambda_j\}_{j\geq1}\) such that the constant \(C_{f}^{(\boldsymbol{\nu},\boldsymbol{\lambda})}\) remains bounded independently of \(d\). 

Employing the isotropic multi-index set \(\mathcal{I}_L\), we obtain LISG interpolation by choosing kernel hyperparameters and the penalty vector \(\mathbf{p}\in\mathbb{N}_0^d\) to reflect the assumed lengthscale-anisotropy of \(f\), according to the relation \(\lambda_j=2^{p_j}\). In Figure~\ref{fig: ISG and LISG}, a two-dimensional LISG of penalty \(\mathbf{p}=(1,2)\) is illustrated in contrast to an ISG of the same level. We have the following bound for the approximation error, where we define the set of subsets of \(\{1,\dots,d\}\) of size \(k\) by \(\mathcal{P}^d_k\coloneqq\{\mathfrak{u}\subset\{1,\dots,d\}\,:\,|\mathfrak{u}|=k, \mathfrak{u}_i<\mathfrak{u}_{i+1}\}\) and, for any \(\mathbf{v}\in\mathbb{R}^d\) and \(\mathfrak{u}\subset\mathcal{P}_k^d\), we denote \((v_{\mathfrak{u}_1},\dots,v_{\mathfrak{u}_k})\in\mathbb{N}_0^k\) by \(\mathbf{v}_{\mathfrak{u}}\).

\begin{theorem}[Theorem 2, \cite{addy2025lengthscaleinformedsparsegridskernel}]\label{thm: error in L}
    Let \(\mathbf{p}\in\mathbb{N}_0^d\) and \(L\in\mathbb{N}_0\) be given, and let \(\boldsymbol{\nu},\boldsymbol{\alpha}\in\mathbb{R}^d_{\geq1/2}\) be such that \(\nu_j-\alpha_j=c\in\mathbb{R}_{\geq0}\) for all \(1\leq j \leq d\). For a constant \(C^{(\boldsymbol{\nu},\boldsymbol{\alpha})}_{\textnormal{L}\ref{lem: tensor hilbert}}\) independent of \(L\) and \(\mathbf{p}\) and defined in Lemma~\ref{lem: tensor hilbert}, we have
    \begin{align}
    \begin{split}
    \left\lVert\, I-S_{\mathcal{I}_L,\mathbf{p}}^{(\boldsymbol{\nu},2^\mathbf{p})}\,\right\rVert_{\mathcal{N}_{\Phi_{\boldsymbol{\nu},2^{\mathbf{p}}}}(\Gamma^d)\rightarrow\mathcal{N}_{\Phi_{\boldsymbol{\alpha},2^{\mathbf{p}}}}(\Gamma^d)}
    \leq C^{(\boldsymbol{\nu},\boldsymbol{\alpha})}_{\textnormal{L}\ref{lem: tensor hilbert}}\sum_{k=1}^d2^{-ck}\sum_{\mathfrak{u}\in\mathcal{P}^d_k}2^{-c|\mathbf{p}_{\mathfrak{u}}|_1}\epsilon^{(k)}_{\boldsymbol{\nu}_{\mathfrak{u}},\boldsymbol{\alpha}_{\mathfrak
    {u}}}(L-|\mathbf{p}_{\mathfrak{u}}|_1-k),
    \end{split}\nonumber
    \end{align}
    where \(\epsilon^{(k)}_{\boldsymbol{\nu}_{\mathfrak{u}},\boldsymbol{\alpha}_{\mathfrak{u}}}(L)\) is 
    as defined in Theorem~\ref{thm: nobile result}.
\end{theorem}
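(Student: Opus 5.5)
We will prove Theorem~\ref{thm: error in L} by reducing the sparse grid error to a sum over excluded multi-indices and then regrouping that sum according to which coordinates are \emph{active}, i.e.\ which coordinates have level exceeding their penalty. The plan has four steps.

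\textbf{Step 1 (error expansion).} Write $\Delta_{\ell}^{(j)} \coloneqq s_{\mathcal{X}_{\ell}^{p_j},\phi_{\nu_j,2^{p_j}}} - s_{\mathcal{X}_{\ell-1}^{p_j},\phi_{\nu_j,2^{p_j}}}$. Since $\mathcal{I}_L$ is downward closed and the point sets are nested, the identity telescopes as $I=\sum_{\boldsymbol{\ell}\in\mathbb{N}_0^d}\bigotimes_j\Delta^{(j)}_{\ell_j}$ (strongly on the native space). Hence
\[
I-S_{\mathcal{I}_L,\mathbf{p}}^{(\boldsymbol{\nu},2^{\mathbf{p}})}=\sum_{\boldsymbol{\ell}\notin\mathcal{I}_L}\bigotimes_j\Delta^{(j)}_{\ell_j}.
\]
By Definition~\ref{def:chi_l^p}, $\mathcal{X}^{p}_{\ell}=\{0\}$ for $0\le\ell\le p$. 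Therefore $\Delta^{(j)}_{\ell}=0$ for $1\le \ell_j\le p_j$, while $\Delta^{(j)}_0 = s_{\{0\}}$. Only multi-indices with each $\ell_j\in\{0\}\cup\{p_j+1,p_j+2,\dots\}$ contribute.

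\textbf{Step 2 (regrouping by active set).} Group the surviving multi-indices by the active set $\mathfrak{u}=\{j:\ell_j\ge p_j+1\}$, with $|\mathfrak{u}|=k$. Reparametrise $\ell_j=m_j+p_j+1$ for $j\in\mathfrak{u}$, with $\mathbf{m}\in\mathbb{N}_0^k$. The condition $\boldsymbol{\ell}\notin\mathcal{I}_L$ becomes $|\mathbf{m}|_1> L-|\mathbf{p}_{\mathfrak{u}}|_1-k$, i.e.\ $\mathbf{m}\notin\mathcal{I}_{L-|\mathbf{p}_\mathfrak{u}|_1-k}$. The term $k=0$ corresponds to $\boldsymbol\ell=\mathbf{0}\in\mathcal{I}_L$, so it never appears. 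For $j\in\mathfrak{u}$, $\Delta^{(j)}_{m_j+p_j+1}$ is the ordinary (unpenalised) hierarchical surplus at level $m_j+1$ on the point sets $\mathcal{X}_{m_j+1}$, taken for a kernel of lengthscale $2^{p_j}$.

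\textbf{Step 3 (one-dimensional scaling bounds).} We then invoke the one-dimensional estimates underlying Lemma~\ref{lem: tensor hilbert} (a constant times $2^{-c\,\ell}$ in the $\mathcal{N}_{\phi_{\nu,\lambda}}\to\mathcal{N}_{\phi_{\alpha,\lambda}}$ norm, with the lengthscale dependence made explicit). The key mechanism is that rescaling $x\mapsto x/\lambda$ turns a kernel of lengthscale $\lambda=2^{p}$ on $\Gamma$ with spacing $2^{-(\ell-p)-1}$ into a unit-lengthscale kernel on a grid of effective resolution $2^{-\ell-1}$. The native-space norms transform by compatible factors, so the surplus bound becomes $C\,2^{-c(m_j+p_j+1)}$. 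For inactive coordinates, $\Delta^{(j)}_0$ is the one-point interpolant, whose operator norm is bounded by a constant. Lemma~\ref{lem: tensor hilbert} turns the tensor product of these bounds into a product bound, giving $C^{(\boldsymbol\nu,\boldsymbol\alpha)}_{\textnormal{L}\ref{lem: tensor hilbert}}\,2^{-ck}2^{-c|\mathbf{p}_\mathfrak{u}|_1}2^{-c|\mathbf{m}|_1}$.

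\textbf{Step 4 (summation).} Sum over $\mathbf{m}\notin\mathcal{I}_{L-|\mathbf{p}_\mathfrak{u}|_1-k}$. By definition this sum yields $\epsilon^{(k)}_{\boldsymbol\nu_\mathfrak{u},\boldsymbol\alpha_\mathfrak{u}}(L-|\mathbf{p}_\mathfrak{u}|_1-k)$, up to the constant absorbed in Theorem~\ref{thm: nobile result}, and summing over $\mathfrak{u}$ and $k$ gives the claim. The triangle inequality suffices here, since we bound each term's operator norm separately.

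The main obstacle is Step 3: obtaining the one-dimensional surplus bound with the explicit factor $2^{-c(\ell-p)}\cdot 2^{-cp}$ uniformly in $p$. Because $\Gamma$ is a fixed domain, the scaling argument does not map $\Gamma$ onto itself. It requires extension operators and native-norm equivalence constants that are uniform in $\lambda\ge1$. The equal-gap assumption $\nu_j-\alpha_j=c$ is what makes the scaling factors combine into a single power $2^{-c(\cdot)}$. I would try first to work directly with the Fourier characterisation of the Mat\'ern native norm on $\mathbb{R}$, where dilation acts exactly, and then restrict to $\Gamma$ via minimal-norm extension.
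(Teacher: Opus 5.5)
Your proposal is correct and follows essentially the same route as the paper. Lemma~\ref{lem: tensor hilbert}, taken with $\boldsymbol{\omega}=\mathbf{1}$ so that $\mathcal{A}_{L,\mathbf{1}}=\mathcal{I}_L$, is exactly your Steps 1 and 3: it expands $I-S$ over the excluded multi-indices with one-dimensional factors $\mathbf{Q}$ that vanish for $1\le l_j\le p_j$, equal $1$ for $l_j=0$, and equal $C_{\textnormal{W}}2^{-c\,l_j}$ otherwise, with $C_{\textnormal{W}}$ independent of $p_j$. Lemma~\ref{lem: bijection}, Corollaries~\ref{cor: level sets double}--\ref{cor: isomporphic double} and Lemma~\ref{lem: supersets double} then carry out your Step 2 regrouping by support set and the shift $l_j=m_j+p_j+1$, which produces the factor $2^{-c(|\mathbf{p}_{\mathfrak{u}}|_1+k)}$ and the constant $\prod_{j\in\mathfrak{u}}C_{\textnormal{W}}$ absorbed into $\epsilon^{(k)}$; the $\lambda$-uniform one-dimensional surplus bound you flag as the main obstacle is imported by the paper (as the $p$-independence of $C_{\textnormal{W}}$), so your scaling discussion explains that input rather than adding a step the argument needs.
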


In contrast to Theorem~\ref{thm: nobile result}, the bound is expressed directly in the Native space norm associated with the separable Mat\'ern kernels, rather than in standard mixed Sobolev norms. By Proposition~\ref{prop: matern native spaces}, these norms are equivalent. Additionally, Proposition~7 in \cite{addy2025lengthscaleinformedsparsegridskernel} immediately provides an upper bound for the \(L^\infty\)-error via the relationship \(\|\cdot\|_{L^\infty(\Gamma^d)}\leq|\boldsymbol{\sigma}|_1\|\cdot\|_{\mathcal{N}_{\Phi_{\boldsymbol{\nu},\boldsymbol{\lambda}}}(\Gamma^d)}\). The bound is given in terms of the level parameter \(L\), where the relationship between \(L\) and the number of interpolation points \(N\) is given explicitly in Theorem~3, \cite{addy2025lengthscaleinformedsparsegridskernel}. We emphasise further that the restriction to isotropic smoothness \(\nu_j-\alpha_j=c\) for all \(1\leq j\leq d\) is adopted solely for presentational convenience. A generalised bound for arbitrary \(\boldsymbol{\nu},\boldsymbol{\alpha}\in\mathbb{R}_{\geq1/2}^d\) satisfying \(\nu_j\geq\alpha_j\) is given in \cite{addy2026thesis}, supporting the application to settings with anisotropic regularity.

The bound is presented as a sum of error contributions, each corresponding to the upper bound for the error of an ISG defined on a subspace of \(\Gamma^d\) identified by \(\mathfrak{u}\). The cofactors \(2^{-c|\mathbf{p}_{\mathfrak{u}}|}\) control the initial magnitude of these contributions. For non-trivial penalties, the cofactors associated with higher dimensional subspaces \(k\)
become significantly smaller. Consequently, to balance these varying initial contributions across subspaces, LISGs begin allocating points in higher dimensional subspaces only after the smaller subspaces have been resolved to sufficient accuracy. The result is an interpolation scheme designed to optimise for the pre-asymptotic convergence of the interpolant, in contrast to ASGs, which target improved asymptotic behaviour. This distinction is particularly important in very high dimensions, where it is often unrealistic to expect all subspaces to be resolved sufficiently for the asymptotic regime to dominate. Nevertheless, the bound shows that the asymptotic error convergence rate of the method coincides with that of the isotropic case in Theorem~\ref{thm: nobile result}, being governed by the slowest-decaying contribution in \(L\), occurring for \(\mathfrak{u}=\{1,\dots,d\}\). In \cite{addy2025lengthscaleinformedsparsegridskernel}, the method was shown to exhibit dimension-independent \(L_2\)-error convergence in practice on a series of test functions with anisotropy encoded through linearly and exponentially growing lengthscales in \(j\), in dimensions up to $d=100$.

\section{Doubly anisotropic sparse grids}\label{sec: dasg construciton}

\begin{figure}
    \centering
    \begin{tikzpicture}[scale=0.8]


    \draw[gray, thick] (-5,5.5+-2) -- (-5,5.5+2);
    \draw[gray, thick] (-1,5.5+-2) -- (-1,5.5+2);
    \draw[gray, thick] (-5,5.5+2) -- (-1,5.5+2);
    \draw[gray, thick] (-5,5.5+-2) -- (-1,5.5+-2);
    \filldraw[gray, fill opacity = 0.2] (-5,5.5+2) -- (-1,5.5+2) -- (-1,5.5+-2) -- (-5,5.5+-2) -- (-5,5.5+2);
    \filldraw[white] (-4,5.5+0.5) -- (-2,5.5+0.5) -- (-2,5.5+-0.5) -- (-4,5.5+-0.5) -- (-4,5.5+0.5);

    \draw[gray, thick] (5,5.5+-2) -- (5,5.5+2);
    \draw[gray, thick] (1,5.5+-2) -- (1,5.5+2);
    \draw[gray, thick] (5,5.5+2) -- (1,5.5+2);
    \draw[gray, thick] (5,5.5+-2) -- (1,5.5+-2);

    \draw[gray, thick] (-4,5.5+-0.5) -- (-4,5.5+0.5);
    \draw[gray, thick] (-2,5.5+-0.5) -- (-2,5.5+0.5);
    \draw[gray, thick] (-4,5.5+0.5) -- (-2,5.5+0.5);
    \draw[gray, thick] (-4,5.5+-0.5) -- (-2,5.5+-0.5);

    \draw[gray, thick] (-4,5.5+-0.5) -- (1,5.5+-2);
    \draw[gray, thick] (-2,5.5+-0.5) -- (1,5.5+-0.5-0.64285714285);
    \draw[gray, thick, dotted] (1,5.5+-0.5-0.64285714285) -- (5,5.5+-2);
    \draw[gray, thick] (-4,5.5+0.5) -- (1,5.5+2);
    \draw[gray, thick] (-2,5.5+0.5) -- (1,5.5+0.5+0.64285714285);
    \draw[gray, thick,dotted] (1,5.5+0.5+0.64285714285) -- (5,5.5+2);

    
    \filldraw[black] (-3,5.5+0) circle (3/4pt);
    
    \filldraw[gray] (-2,5.5+0) circle (3/4pt);
    \filldraw[gray] (-4,5.5+0) circle (3/4pt);
    \filldraw[black] (-2.5,5.5+0) circle (3/4pt);
    \filldraw[black] (-3.5,5.5+0) circle (3/4pt);
    \filldraw[gray] (-1.5,5.5+0) circle (3/4pt);
    \filldraw[gray] (-4.5,5.5+0) circle (3/4pt);
    \filldraw[gray] (-3,5.5+1) circle (3/4pt);
    \filldraw[gray] (-3,5.5+-1) circle (3/4pt);
    \filldraw[gray] (-3,5.5+-0.5) circle (3/4pt);
    \filldraw[gray] (-3,5.5+-1.5) circle (3/4pt);
    \filldraw[gray] (-3,5.5+0.5) circle (3/4pt);
    \filldraw[gray] (-3,5.5+1.5) circle (3/4pt);
    \filldraw[black] (-3,5.5+-0.25) circle (3/4pt);
    \filldraw[black] (-3,5.5+0.25) circle (3/4pt);
    \filldraw[gray] (-3,5.5+1.25) circle (3/4pt);
    \filldraw[gray] (-3,5.5+-1.25) circle (3/4pt);
    \filldraw[gray] (-3,5.5+0.75) circle (3/4pt);
    \filldraw[gray] (-3,5.5+-0.75) circle (3/4pt);
    \filldraw[gray] (-3,5.5+-1.75) circle (3/4pt);
    \filldraw[gray] (-3,5.5+1.75) circle (3/4pt);
    \filldraw[black] (-3,5.5+-0.25-0.125) circle (3/4pt);
    \filldraw[black] (-3,5.5+0.25-0.125) circle (3/4pt);
    \filldraw[gray] (-3,5.5+1.25-0.125) circle (3/4pt);
    \filldraw[gray] (-3,5.5+-1.25-0.125) circle (3/4pt);
    \filldraw[gray] (-3,5.5+0.75-0.125) circle (3/4pt);
    \filldraw[gray] (-3,5.5+-0.75-0.125) circle (3/4pt);
    \filldraw[gray] (-3,5.5+-1.75-0.125) circle (3/4pt);
    \filldraw[gray] (-3,5.5+1.75-0.125) circle (3/4pt);
    
    \filldraw[black] (-3,5.5+-0.25+0.125) circle (3/4pt);
    \filldraw[black] (-3,5.5+0.25+0.125) circle (3/4pt);
    \filldraw[gray] (-3,5.5+1.25+0.125) circle (3/4pt);
    \filldraw[gray] (-3,5.5+-1.25+0.125) circle (3/4pt);
    \filldraw[gray] (-3,5.5+0.75+0.125) circle (3/4pt);
    \filldraw[gray] (-3,5.5+-0.75+0.125) circle (3/4pt);
    \filldraw[gray] (-3,5.5+-1.75+0.125) circle (3/4pt);
    \filldraw[gray] (-3,5.5+1.75+0.125) circle (3/4pt);
    \filldraw[black] (-3,5.5+-0.25-0.125-0.0625) circle (3/4pt);
    \filldraw[black] (-3,5.5+-0.25-0.125+0.0625) circle (3/4pt);
    \filldraw[black] (-3,5.5+0.25-0.125-0.0625) circle (3/4pt);
    \filldraw[black] (-3,5.5+0.25-0.125+0.0625) circle (3/4pt);
    \filldraw[gray] (-3,5.5+1.25-0.125-0.0625) circle (3/4pt);
    \filldraw[gray] (-3,5.5+1.25-0.125+0.0625) circle (3/4pt);
    \filldraw[gray] (-3,5.5+-1.25-0.125-0.0625) circle (3/4pt);
    \filldraw[gray] (-3,5.5+-1.25-0.125+0.0625) circle (3/4pt);
    \filldraw[gray] (-3,5.5+0.75-0.125-0.0625) circle (3/4pt);
    \filldraw[gray] (-3,5.5+0.75-0.125+0.0625) circle (3/4pt);
    \filldraw[gray] (-3,5.5+-0.75-0.125-0.0625) circle (3/4pt);
    \filldraw[gray] (-3,5.5+-0.75-0.125+0.0625) circle (3/4pt);
    \filldraw[gray] (-3,5.5+-1.75-0.125-0.0625) circle (3/4pt);
    \filldraw[gray] (-3,5.5+-1.75-0.125+0.0625) circle (3/4pt);
    \filldraw[gray] (-3,5.5+1.75-0.125-0.0625) circle (3/4pt);
    \filldraw[gray] (-3,5.5+1.75-0.125+0.0625) circle (3/4pt);
    
    \filldraw[black] (-3,5.5+-0.25+0.125-0.0625) circle (3/4pt);
    \filldraw[black] (-3,5.5+-0.25+0.125+0.0625) circle (3/4pt);
    \filldraw[black] (-3,5.5+0.25+0.125-0.0625) circle (3/4pt);
    \filldraw[black] (-3,5.5+0.25+0.125+0.0625) circle (3/4pt);
    \filldraw[gray] (-3,5.5+1.25+0.125-0.0625) circle (3/4pt);
    \filldraw[gray] (-3,5.5+1.25+0.125+0.0625) circle (3/4pt);
    \filldraw[gray] (-3,5.5+-1.25+0.125-0.0625) circle (3/4pt);
    \filldraw[gray] (-3,5.5+-1.25+0.125+0.0625) circle (3/4pt);
    \filldraw[gray] (-3,5.5+0.75+0.125-0.0625) circle (3/4pt);
    \filldraw[gray] (-3,5.5+0.75+0.125+0.0625) circle (3/4pt);
    \filldraw[gray] (-3,5.5+-0.75+0.125-0.0625) circle (3/4pt);
    \filldraw[gray] (-3,5.5+-0.75+0.125+0.0625) circle (3/4pt);
    \filldraw[gray] (-3,5.5+-1.75+0.125-0.0625) circle (3/4pt);
    \filldraw[gray] (-3,5.5+-1.75+0.125+0.0625) circle (3/4pt);
    \filldraw[gray] (-3,5.5+1.75+0.125-0.0625) circle (3/4pt);
    \filldraw[gray] (-3,5.5+1.75+0.125+0.0625) circle (3/4pt);

    \filldraw[gray] (-4,5.5+1) circle (3/4pt);
    \filldraw[gray] (-2,5.5+1) circle (3/4pt);
    \filldraw[gray] (-2,5.5+-1) circle (3/4pt);
    \filldraw[gray] (-4,5.5+-1) circle (3/4pt);

    \filldraw[gray] (1/2 + -4,5.5+0 + 1) circle (3/4pt);
    \filldraw[gray] (-1/2 + -4,5.5+0 + 1) circle (3/4pt);

    \filldraw[gray] (0 + -4,5.5+1/2 + 1) circle (3/4pt);
    \filldraw[gray] (0 + -4,5.5+-1/2 + 1) circle (3/4pt);
    \filldraw[gray] (0 + -4,5.5+-0.5/2 + 1) circle (3/4pt);
    \filldraw[gray] (0 + -4,5.5+-1.5/2 + 1) circle (3/4pt);
    \filldraw[gray] (0 + -4,5.5+1.5/2 + 1) circle (3/4pt);
    \filldraw[gray] (0 + -4,5.5+0.5/2 + 1) circle (3/4pt);
    

    \filldraw[gray] (1/2 + -2,5.5+0 + 1) circle (3/4pt);
    \filldraw[gray] (-1/2 + -2,5.5+0 + 1) circle (3/4pt);

    \filldraw[gray] (0 + -2,5.5+1/2 + 1) circle (3/4pt);
    \filldraw[gray] (0 + -2,5.5+-1/2 + 1) circle (3/4pt);
    \filldraw[gray] (0 + -2,5.5+-0.5/2 + 1) circle (3/4pt);
    \filldraw[gray] (0 + -2,5.5+-1.5/2 + 1) circle (3/4pt);
    \filldraw[gray] (0 + -2,5.5+1.5/2 + 1) circle (3/4pt);
    \filldraw[gray] (0 + -2,5.5+0.5/2 + 1) circle (3/4pt);


    \filldraw[gray] (1/2 + -2,5.5+0 - 1) circle (3/4pt);
    \filldraw[gray] (-1/2 + -2,5.5+0 - 1) circle (3/4pt);

    \filldraw[gray] (0 + -2,5.5+1/2 - 1) circle (3/4pt);
    \filldraw[gray] (0 + -2,5.5+-1/2 - 1) circle (3/4pt);
    \filldraw[gray] (0 + -2,5.5+-0.5/2 - 1) circle (3/4pt);
    \filldraw[gray] (0 + -2,5.5+-1.5/2 - 1) circle (3/4pt);
    \filldraw[gray] (0 + -2,5.5+1.5/2 - 1) circle (3/4pt);
    \filldraw[gray] (0 + -2,5.5+0.5/2 - 1) circle (3/4pt);
    

    \filldraw[gray] (1/2 + -4,5.5+0 - 1) circle (3/4pt);
    \filldraw[gray] (-1/2 + -4,5.5+0 - 1) circle (3/4pt);

    \filldraw[gray] (0 + -4,5.5+1/2 - 1) circle (3/4pt);
    \filldraw[gray] (0 + -4,5.5+-1/2 - 1) circle (3/4pt);
    \filldraw[gray] (0 + -4,5.5+-0.5/2 - 1) circle (3/4pt);
    \filldraw[gray] (0 + -4,5.5+-1.5/2 - 1) circle (3/4pt);
    \filldraw[gray] (0 + -4,5.5+1.5/2 - 1) circle (3/4pt);
    \filldraw[gray] (0 + -4,5.5+0.5/2 - 1) circle (3/4pt);
    \filldraw[black] (3,5.5+0) circle (3/4pt);
    
    \filldraw[black] (2,5.5+0) circle (3/4pt);
    \filldraw[black] (4,5.5+0) circle (3/4pt);
    

    \filldraw[black] (3,5.5+1) circle (3/4pt);
    \filldraw[black] (3,5.5+-1) circle (3/4pt);
    \filldraw[black] (3,5.5+-0.5) circle (3/4pt);
    \filldraw[black] (3,5.5+-1.5) circle (3/4pt);
    \filldraw[black] (3,5.5+0.5) circle (3/4pt);
    \filldraw[black] (3,5.5+1.5) circle (3/4pt);
    \filldraw[black] (3,5.5+-0.5+0.5/2) circle (3/4pt);
    \filldraw[black] (3,5.5+-1.5+0.5/2) circle (3/4pt);
    \filldraw[black] (3,5.5+0.5+0.5/2) circle (3/4pt);
    \filldraw[black] (3,5.5+1.5+0.5/2) circle (3/4pt);
    \filldraw[black] (3,5.5+-0.5-0.5/2) circle (3/4pt);
    \filldraw[black] (3,5.5+-1.5-0.5/2) circle (3/4pt);
    \filldraw[black] (3,5.5+0.5-0.5/2) circle (3/4pt);
    \filldraw[black] (3,5.5+1.5-0.5/2) circle (3/4pt);



    \node[black] at (-5,5.5+-2.3) {\small-0.5};
    \node[black] at (-1,5.5+-2.3) {\small0.5};
    \node[black] at (5,5.5+-2.3) {\small0.5};
    \node[black] at (1,5.5+-2.3) {\small-0.5};

    \node[black] at (-5.5,5.5+-2) {\small-0.5};
    \node[black] at (-5.5,5.5+2) {\small0.5};
    \node[black] at (5.5,5.5+-2) {\small-0.5};
    \node[black] at (5.5,5.5+2) {\small0.5};

    \node[black] at (-3,5.5+-2.5) {\Large\(x_1\)};
    \node[black] at (-5.5,5.5+0) {\Large\(x_2\)};
    \node[black] at (3,5.5+-2.5) {\Large\(x_1\)};
    \node[black] at (5.5,5.5+0) {\Large\(x_2\)};


    \node[black] at (-3,8.8) {Anisotropic};
    \node[black] at (-3,8.2) {\(\boldsymbol{\omega}=(2,1),\:\: \mathbf{p}=(0,0)\)};
    \node[black] at (3,8.8) {Doubly anisotropic};
    \node at (3,8.2) {\(\boldsymbol{\omega}=(2,1),\:\: \mathbf{p}=(1,2)\)};


\end{tikzpicture}
\caption{Anisotropic and doubly-anisotropic sparse grids in two dimensions, each of level \(L=5\) and weighted according to \(\boldsymbol{\omega}=(2,1)\). The designs correspond to penalty choices of \(\mathbf{p}=(0,0)\) and \(\mathbf{p}=(1,2)\), respectively.}\label{fig: ASG and DASG}
\end{figure}
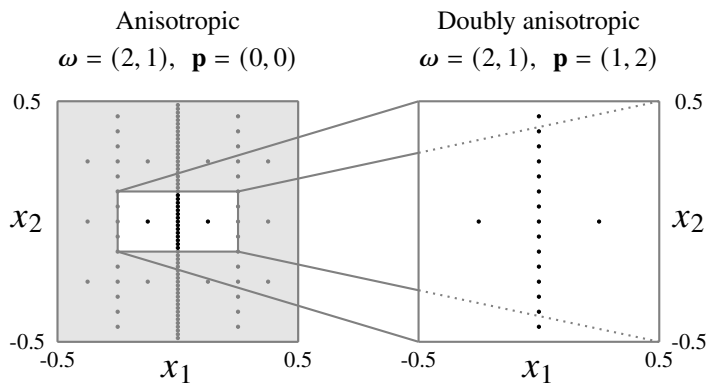

In the previous sections, we introduced Mat\'ern kernel interpolation on ASGs and LISGs, and described how these constructions exploit anisotropy in \(f\), as characterised by the hyperparameters \(\boldsymbol{\nu}\) and \(\boldsymbol{\lambda}\), optimising for faster asymptotic and pre-asymptotic error convergence, respectively. It is therefore natural to consider a generalised sparse grid construction that simultaneously accounts for both types of anisotropy. Under the same lengthscale-anisotropic assumption on \(f\) as in \eqref{eq: lengthscale anisotropy assumption}, \emph{doubly anisotropic sparse grids} (DASGs) achieve this by combining the weighted multi-index set \(\mathcal{A}_{L,\boldsymbol{\omega}}\) defined in \eqref{eq: anisotropic index set} with a non-trivial penalty \(\mathbf{p}\), where the components are again determined by the lengthscales via \(\lambda_j=2^{p_j}\). In each coordinate direction \(j\), the construction is therefore adapted to the relative lengthscale by delaying the onset of the growth points by the penalty \(p_j\), as well as the relative regularity by slowing the rate of growth of points by the weight \(\omega_j\). In Figure~\ref{fig: ASG and DASG}, a two-dimensional DASG  is shown next to an ASG of the same level and weighting.

Similar to Theorem \ref{thm: error in L}, the native space approximation error for DASGs can be bounded component-wise in terms of the approximation error of smaller sparse grids defined over subspaces of \(\Gamma^d\). However, each component now corresponds to an ASG.

\begin{theorem}\label{thm: doubly anisotropic error}
Let \(\mathbf{p}\in\mathbb{N}_0^d\), \(\boldsymbol{\omega}\in\mathbb{R}_{>0}^d\) and \(L\in\mathbb{N}_0\) be given, and let \(\boldsymbol{\alpha},\boldsymbol{\nu}\in\mathbb{R}^d_{\geq1/2}\) be such that \(\alpha_j<\nu_j\) for all \(1\leq j\leq d\). For a constant \(C^{(\boldsymbol{\nu},\boldsymbol{\alpha})}_{\textnormal{L}\ref{lem: tensor hilbert}}\) independent of \(L\) and \(\mathbf{p}\) and defined in Lemma~\ref{lem: tensor hilbert}, we have
\begin{align}
    \left\lVert\, I-S_{\mathcal{A}_{L,\boldsymbol{\omega}},\mathbf{p}}^{(\boldsymbol{\nu},2^\mathbf{p})}\,\right\rVert_{\mathcal{N}_{\Phi_{\boldsymbol{\nu},2^{\mathbf{p}}}}(\Gamma^d)\rightarrow\mathcal{N}_{\Phi_{\boldsymbol{\alpha},2^{\mathbf{p}}}}(\Gamma^d)}\leq C^{(\boldsymbol{\nu},\boldsymbol{\alpha})}_{\textnormal{L}\ref{lem: tensor hilbert}}\sum_{k=1}^d\sum_{\mathfrak{u}\in\mathcal{P}^d_k}2^{-(\boldsymbol{\nu}_{\mathfrak{u}}-\boldsymbol{\alpha}_{\mathfrak{u}})\cdot(\mathbf{p}_{\mathfrak{u}}+\mathbf{1})}\varepsilon^{(k)}_{\boldsymbol{\nu}_{\mathfrak{u}},\boldsymbol{\alpha}_{\mathfrak{u}},\boldsymbol{\omega}_{\mathfrak{u}}}(L-|\mathbf{p}_{\mathfrak{u}}|_1 -k),\nonumber
\end{align}
where \(\varepsilon^{(k)}_{\boldsymbol{\nu}_{\mathfrak{u}},\boldsymbol{\alpha}_{\mathfrak{u}}}(L)\) is 
as defined in \eqref{eq: anisotropic bound}.
\end{theorem}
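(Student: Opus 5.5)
The plan is to follow the proof of Theorem~\ref{thm: error in L} closely, changing only the index set. Write the error as a sum over the excluded multi-indices, and then regroup that sum by support.

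\textbf{Step 1: error as a sum of detail operators.} Set \(\Delta_{\ell}^{(j)}\coloneqq s_{\mathcal{X}_{\ell}^{p_j},\phi_{\nu_j,2^{p_j}}}-s_{\mathcal{X}_{\ell-1}^{p_j},\phi_{\nu_j,2^{p_j}}}\). The set \(\mathcal{A}_{L,\boldsymbol{\omega}}\) is downward closed and the point sets are nested. Hence the telescoping identity gives
\begin{align}
I-S_{\mathcal{A}_{L,\boldsymbol{\omega}},\mathbf{p}}^{(\boldsymbol{\nu},2^{\mathbf{p}})}=\sum_{\boldsymbol{\ell}\in\mathbb{N}_0^d\setminus\mathcal{A}_{L,\boldsymbol{\omega}}}\bigotimes_{j=1}^d\Delta^{(j)}_{\ell_j},\nonumber
\end{align}
with convergence in the native space norm. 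The triangle inequality and the cross-norm property of the Hilbert tensor product (Lemma~\ref{lem: tensor hilbert}, which I take to supply the constant \(C^{(\boldsymbol{\nu},\boldsymbol{\alpha})}_{\textnormal{L}\ref{lem: tensor hilbert}}\)) reduce the operator norm to \(\sum_{\boldsymbol{\ell}\notin\mathcal{A}_{L,\boldsymbol{\omega}}}\prod_j\|\Delta^{(j)}_{\ell_j}\|_{\mathcal{N}_{\phi_{\nu_j,2^{p_j}}}\to\mathcal{N}_{\phi_{\alpha_j,2^{p_j}}}}\).

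\textbf{Step 2: one-dimensional detail bounds.} By Definition~\ref{def:chi_l^p}, \(\mathcal{X}^{p}_\ell=\mathcal{X}^p_{\ell-1}\) for \(1\le\ell\le p\). So \(\Delta^{(j)}_{\ell}=0\) for \(1\le \ell\le p_j\). The term \(\Delta^{(j)}_0\) is the one-point interpolant, of norm \(O(1)\). For \(\ell\ge p_j+1\), the point set \(\mathcal{X}_{\ell-p_j}\) has fill distance \(\asymp2^{-(\ell-p_j)}\) relative to lengthscale \(2^{p_j}\). After rescaling, the standard Sobolev sampling inequality gives
\begin{align}
\|\Delta^{(j)}_\ell\|\lesssim 2^{-(\nu_j-\alpha_j)\ell}=2^{-(\nu_j-\alpha_j)(p_j+1)}\,2^{-(\nu_j-\alpha_j)(\ell-p_j-1)}.\nonumber
\end{align}
The key point is that the rescaling exchanges factors of \(\lambda_j=2^{p_j}\) with \(2^{-\ell}\), which yields the prefactor \(2^{-(\nu_j-\alpha_j)(p_j+1)}\). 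I expect this step to be the main obstacle: the implied constant must be uniform in \(p_j\), i.e.\ independent of the lengthscale. I would take the lengthscale-explicit one-dimensional estimate from the proof of Theorem~\ref{thm: error in L}, which is valid for arbitrary \(\nu_j-\alpha_j\) by the generalisation in \cite{addy2026thesis}, and absorb its constant into \(C^{(\boldsymbol{\nu},\boldsymbol{\alpha})}_{\textnormal{L}\ref{lem: tensor hilbert}}\).

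\textbf{Step 3: regrouping by support.} Split the excluded indices according to \(\mathfrak{u}=\{j:\ell_j\neq0\}\), with \(|\mathfrak{u}|=k\ge1\); the index \(\boldsymbol{0}\) always lies in \(\mathcal{A}_{L,\boldsymbol{\omega}}\). Only indices with \(\ell_j\ge p_j+1\) for \(j\in\mathfrak{u}\) contribute. Substitute \(\mathbf{m}=\boldsymbol{\ell}_{\mathfrak{u}}-\mathbf{p}_{\mathfrak{u}}-\mathbf{1}\in\mathbb{N}_0^k\). The exclusion condition \(\boldsymbol{\omega}\cdot\boldsymbol{\ell}>L\) becomes
\begin{align}
\boldsymbol{\omega}_{\mathfrak{u}}\cdot\mathbf{m}>L-\boldsymbol{\omega}_{\mathfrak{u}}\cdot(\mathbf{p}_{\mathfrak{u}}+\mathbf{1}).\nonumber
\end{align}
Each such group then contributes
\begin{align}
2^{-(\boldsymbol{\nu}_{\mathfrak{u}}-\boldsymbol{\alpha}_{\mathfrak{u}})\cdot(\mathbf{p}_{\mathfrak{u}}+\mathbf{1})}\sum_{\mathbf{m}\notin\mathcal{A}_{L',\boldsymbol{\omega}_{\mathfrak{u}}}}2^{-(\boldsymbol{\nu}_{\mathfrak{u}}-\boldsymbol{\alpha}_{\mathfrak{u}})\cdot\mathbf{m}}.\nonumber
\end{align}
By \eqref{eq: anisotropic bound} this is \(\varepsilon^{(k)}_{\boldsymbol{\nu}_{\mathfrak{u}},\boldsymbol{\alpha}_{\mathfrak{u}},\boldsymbol{\omega}_{\mathfrak{u}}}(L')\). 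If \(L'\le0\), the whole lattice is counted and the sum is bounded by a constant, consistent with the convention.

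\textbf{Step 4: matching the shifted level.} It remains to reconcile the shifted level \(L'=L-\boldsymbol{\omega}_{\mathfrak{u}}\cdot(\mathbf{p}_{\mathfrak{u}}+\mathbf{1})\) with the stated \(L-|\mathbf{p}_{\mathfrak{u}}|_1-k\). I would check the normalisation of \(\boldsymbol{\omega}\): with \(\omega_j=1\) the two agree exactly, recovering Theorem~\ref{thm: error in L}. In general, \(\varepsilon\) is nonincreasing in its argument, so whenever \(\boldsymbol{\omega}_{\mathfrak{u}}\cdot(\mathbf{p}_{\mathfrak{u}}+\mathbf{1})\ge|\mathbf{p}_{\mathfrak{u}}|_1+k\) (for instance when \(\omega_j\ge1\)) the claimed bound follows. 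Otherwise I would read the argument as the weighted shift, adjusting constants if needed.
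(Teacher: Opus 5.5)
Steps~1--3 follow the paper's own route (Lemma~\ref{lem: tensor hilbert}, then regrouping the excluded indices by support as in Lemma~\ref{lem: supersets double}). Your Step~3 substitution, which gives the shifted level \(L'=L-\boldsymbol{\omega}_{\mathfrak{u}}\cdot(\mathbf{p}_{\mathfrak{u}}+\mathbf{1})\), is correct. The gap is Step~4, where the monotonicity argument runs backwards. Since \(\varepsilon^{(k)}\) is nonincreasing in its argument, the condition \(\boldsymbol{\omega}_{\mathfrak{u}}\cdot(\mathbf{p}_{\mathfrak{u}}+\mathbf{1})\geq|\mathbf{p}_{\mathfrak{u}}|_1+k\) gives \(L'\leq L-|\mathbf{p}_{\mathfrak{u}}|_1-k\). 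Hence \(\varepsilon^{(k)}(L')\geq\varepsilon^{(k)}(L-|\mathbf{p}_{\mathfrak{u}}|_1-k)\), so what you have proved is \emph{weaker} than the displayed bound, not stronger. The deduction works only when \(\boldsymbol{\omega}_{\mathfrak{u}}\cdot(\mathbf{p}_{\mathfrak{u}}+\mathbf{1})\leq|\mathbf{p}_{\mathfrak{u}}|_1+k\) for every \(\mathfrak{u}\) (e.g.\ all \(\omega_j\leq1\)). That rules out the weights \(\omega_j=\nu_j-\alpha_j+1>1\) used in the paper. ``Adjusting constants'' cannot repair this: the level mismatch \((\boldsymbol{\omega}_{\mathfrak{u}}-\mathbf{1})\cdot(\mathbf{p}_{\mathfrak{u}}+\mathbf{1})\) is unbounded in \(\mathbf{p}\), whereas the constant must be independent of \(L\) and \(\mathbf{p}\). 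Already for \(d=1\), \(p=0\), \(\omega=2\), \(L=5\) and \(c=\nu-\alpha\), the excluded indices are \(\ell\geq3\). Your bound is then a constant times \(\sum_{\ell\geq3}2^{-c\ell}\), while the displayed right-hand side is the same constant times \(\sum_{\ell\geq4}2^{-c\ell}\).

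This is not just a slip in your write-up. The paper closes the same step by citing Lemma~\ref{lem: supersets double}, which asserts the identity with level \(\max\{0,L-|\mathbf{p}_{\mathfrak{u}}|-k\}\), on the grounds that one simply replaces \(\mathcal{I}_L\) by \(\mathcal{A}_{L,\boldsymbol{\omega}}\) in the LISG argument. Your substitution \(\mathbf{m}=\boldsymbol{\ell}_{\mathfrak{u}}-\mathbf{p}_{\mathfrak{u}}-\mathbf{1}\) is exactly the computation behind that lemma, and it shows the identity holds only for \(\boldsymbol{\omega}_{\mathfrak{u}}=\mathbf{1}\); the example above is a counterexample to it. So you cannot close Step~4 by appeal to the lemma either. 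What Steps~1--3 genuinely establish is the bound with \(\varepsilon^{(k)}_{\boldsymbol{\nu}_{\mathfrak{u}},\boldsymbol{\alpha}_{\mathfrak{u}},\boldsymbol{\omega}_{\mathfrak{u}}}\bigl(L-\boldsymbol{\omega}_{\mathfrak{u}}\cdot(\mathbf{p}_{\mathfrak{u}}+\mathbf{1})\bigr)\) in place of \(\varepsilon^{(k)}_{\boldsymbol{\nu}_{\mathfrak{u}},\boldsymbol{\alpha}_{\mathfrak{u}},\boldsymbol{\omega}_{\mathfrak{u}}}(L-|\mathbf{p}_{\mathfrak{u}}|_1-k)\). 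State that, noting that it coincides with the displayed form for \(\boldsymbol{\omega}=\mathbf{1}\) and implies it under the inequality above. Finally, your remark that the full-lattice sum for \(L'\leq0\) is ``consistent with the convention'' is wrong. \eqref{eq: anisotropic bound} sets \(\varepsilon:=0\) for \(L\leq0\), which would make the right-hand side vanish at \(L=0\), where \(S_{\mathcal{A}_{0,\boldsymbol{\omega}},\mathbf{p}}^{(\boldsymbol{\nu},2^{\mathbf{p}})}=\bigotimes_{j}s_{\mathcal{X}_0,\phi_{\nu_j,2^{p_j}}}\neq I\). You must evaluate \(\varepsilon\) through its defining sum for every argument, with \(\mathcal{A}_{L',\boldsymbol{\omega}}=\emptyset\) when \(L'<0\).
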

\begin{proof}
    Given in Section~\ref{sec: double error analysis}
\end{proof}

The benefit of accounting for lengthscale anisotropy remains the same as with LISGs; subspaces of \(\Gamma^d\) associated with a large combined penalty contribute much less to the overall error, and consequently the interpolant is less developed in these subspaces. 
As with LISGs, DASGs inherit the same asymptotic rates as the corresponding ASGs.

In contrast to Theorem~\ref{thm: error in L}, we see an interaction between the two anisotropy types in the factors \(2^{-(\boldsymbol{\nu}_{\mathfrak{u}}-\boldsymbol{\alpha}_{\mathfrak{u}})\cdot(\mathbf{p}_{\mathfrak{u}}+\mathbf{1})}\); the effect of the lengthscale in a given axial direction \(j\) is multiplied by the associated difference in smoothness \(\nu_j-\alpha_j\), compounding the theoretical benefits of large lengthscales when \(\nu_j\) is also large. We note, however, that this construction does not account for how the constant \(C_{\textnormal{L}\ref{lem: supersets double}}^{(\boldsymbol{\nu}_{\mathfrak{u}},\boldsymbol{\alpha}_{\mathfrak{u}})}\) changes with varying regularities. It was found to grow substantially in a brief numerical study in \cite{addy2026thesis}. To account for this growth in practice, we introduce a tuning parameter \(\mathbf{r}\in\mathbb{N}_0^d\) that increases the resolution in each coordinate as appropriate according to \(S_{\mathcal{A}_{L,\boldsymbol{\omega}},\mathbf{p}-\mathbf{r}}^{(\boldsymbol{\nu},2^\mathbf{p})}\). 

The relationship between \(L\) and the number of interpolation points \(N\) is significantly more complicated than in the case of LISGs. For example, if \(\omega_j>1\) for all \(1\leq j\leq d\), then it is now possible that \(S_{\mathcal{A}_{L+1,\boldsymbol{\omega}},\mathbf{p}}^{(\boldsymbol{\nu},2^\mathbf{p})}=S_{\mathcal{A}_{L,\boldsymbol{\omega}},\mathbf{p}}^{(\boldsymbol{\nu},2^\mathbf{p})}\) for a given \(L\in\mathbb{N}_0\). Nonetheless, for the case \(\omega_j\geq 1\) for all \(1\leq j \leq d\), the number of interpolation points in a DASG will always be less than or equal to the number of points in the associated LISG.

\section{Fast implementation}\label{sec:dasg implementation}
As an additional benefit of sparse grid interpolation, the combination of separable kernels and tensor products induces a Kronecker structure in the associated Gram matrices. This structure enables linear solves that are substantially faster than the naive \(\mathcal{O}(N^3)\) cost and, in favourable cases, reduces the computational complexity to nearly \(\mathcal{O}(N\log N)\) \cite{Plumlee2014,addy2025lengthscaleinformedsparsegridskernel}. Evaluating a given interpolant requires computing the weight vector \(\mathbf{w}\in\mathbb{R}^N\) such that,
\[
S_{\mathcal{I},\mathbf{p}}^{(\boldsymbol{\nu},\boldsymbol{\lambda})}(f)(\mathbf{x})=\sum_{i=1}^N w_i\Phi_{\boldsymbol{\nu},\boldsymbol{\lambda}}(\mathbf{x}, \mathbf{x}_i),
\]
for all \(\mathbf{x}\in\Gamma^d\). For the isotropic case, with \(\mathcal{I}=\mathcal{I}_L\) and \(\mathbf{p}=0\), a fast interference algorithm for constructing \(\mathbf{w}\) was developed in \cite{Plumlee2014}. This algorithm was subsequently adapted to LISGs (\(\mathbf{p}\in\mathbb{N}_0^d\)) in \cite{addy2025lengthscaleinformedsparsegridskernel}. In Algorithm~\ref{alg: 1}, we further generalise the approach to anisotropic multi-index sets \(\mathcal{A}_{L,\boldsymbol{\omega}}\), enabling the efficient evaluation of \(\mathbf{w}\) for the DASG interpolant \(S_{\mathcal{A}_{L,\boldsymbol{\omega}},\mathbf{p}}^{(\boldsymbol{\nu},2^{\mathbf{p}})}\). A proof that Algorithm~\ref{alg: 1} produces the correct solution is omitted, as it closely follows the argument presented in \cite{Plumlee2014} and Section 5, \cite{addy2025lengthscaleinformedsparsegridskernel}. The only substantive modification is the appearance of the coefficient \(b_{L,\boldsymbol{\omega},\mathbf{p}}(\boldsymbol{\ell})\), whose form is established in Proposition~\ref{prop: alt DASG def}.

\begin{algorithm}[h]
\caption{Fast inference algorithm for doubly anisotropic sparse grids.}
    \begin{algorithmic}
    \State Initialise $\mathbf{w}=\mathbf{0}\in\mathbb{R}^{N}$
    \For{$\boldsymbol{\ell}\in{\mathcal{W}_{L,\boldsymbol{\omega},\mathbf{p}}}$, defined in Proposition \ref{prop: alt DASG def},}
        \State $\mathbf{w}_{\boldsymbol{\ell}}=\mathbf{w}_{\boldsymbol{\ell}} + b_{L,\boldsymbol{\omega},\mathbf{p}}(\boldsymbol{\ell})\left[\bigotimes_{j=1}^d\phi_{\nu_j,2^{p_j}}(\mathcal{X}_{\ell_j}^{p_j},\mathcal{X}_{\ell_j}^{p_j})^{-1}\right]f(\mathcal{X}_{\ell_1}^{p_j}\times\cdots\times\mathcal{X}_{\ell_d}^{p_j})$
    \EndFor
    \end{algorithmic}
\label{alg: 1}
\end{algorithm}

\begin{proposition}\label{prop: alt DASG def}
Define the set \(\mathcal{W}_{L,\boldsymbol{\omega},\mathbf{p}}=\{\boldsymbol{\ell}\in\mathbb{N}_0^d\,:\,\boldsymbol{\ell}\cdot\boldsymbol{\omega}\leq L, \ell_j> p_j\textrm{ or }\ell_j=0\}\). The sparse grid approximation operator given in Definition~\ref{def: sparse grid operator}, employing the anisotropic multi-index set \(\mathcal{A}_{L,\boldsymbol{\omega}}\), can be rewritten 
    \begin{align}
        S_{\mathcal{A}_{L,\boldsymbol{\omega}},\mathbf{p}}^{(\boldsymbol{\nu},\boldsymbol{\lambda})} &= \sum_{\boldsymbol{\ell}\in \mathcal{W}_{L,\boldsymbol{\omega},\mathbf{p}}}b_{L,\boldsymbol{\omega},\mathbf{p}}(\boldsymbol{\ell})\bigotimes_{j=1}^d s_{\mathcal{X}^{{p_j}}_{\ell_j},\phi_{\nu_j,\lambda_j}},\nonumber
    \end{align}
    where
    \(
        b_{L,\boldsymbol{\omega},\mathbf{p}}(\boldsymbol{\ell})=\sum_{\mathfrak{u
        }\in\mathcal{U}_{L,\boldsymbol{\omega},\mathbf{p}}(\boldsymbol{\ell})}(-1)^{|\mathfrak{u}|},\nonumber
    \)
    and \(\mathcal{U}_{L,\boldsymbol{\omega},\mathbf{p}}(\boldsymbol{\ell})\coloneqq \{\mathfrak{u}\subset\{1,\dots,d\}\,:\,\boldsymbol{\ell}\cdot\boldsymbol{\omega}+\sum_{j\in\mathfrak{u}}[\omega_{j}+\mathbf{1}_{\{\ell_{j}=0\}}p_{j}]\leq L\}\).
\end{proposition}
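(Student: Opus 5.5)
We expand the tensor product of differences in Definition~\ref{def: sparse grid operator} and regroup the resulting terms by the one-dimensional interpolants that actually occur. Write \(\Delta_j^{\ell}\coloneqq s_{\mathcal{X}_{\ell}^{p_j},\phi_{\nu_j,\lambda_j}}-s_{\mathcal{X}_{\ell-1}^{p_j},\phi_{\nu_j,\lambda_j}}\), and abbreviate \(s_j^{\ell}\coloneqq s_{\mathcal{X}^{p_j}_{\ell},\phi_{\nu_j,\lambda_j}}\), with the convention \(s_j^{-1}=0\) because \(\mathcal{X}^{p_j}_{-1}=\emptyset\).

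The key preliminary observation is that the penalised point sets repeat. By Definition~\ref{def:chi_l^p}, \(\mathcal{X}^{p_j}_\ell=\{0\}\) for \(0\le \ell\le p_j\). Hence \(\Delta_j^\ell=0\) for \(1\le\ell\le p_j\), while \(\Delta_j^0=s_j^0\).

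Expanding the product gives
\[
\bigotimes_j\Delta_j^{\ell'_j}=\sum_{\mathfrak{u}\subset\{1,\dots,d\}}(-1)^{|\mathfrak{u}|}\bigotimes_j s_j^{\ell'_j-\mathbf{1}_{\{j\in\mathfrak{u}\}}}.
\]
We restrict to multi-indices \(\boldsymbol{\ell}'\in\mathcal{A}_{L,\boldsymbol{\omega}}\) with \(\ell'_j\notin\{1,\dots,p_j\}\) for all \(j\), since all others contribute zero. Next we reindex by \(\boldsymbol{\ell}=\boldsymbol{\ell}'-\mathbf{1}_{\mathfrak{u}}\). To land on a canonical representative, we use that \(s_j^{m}=s_j^{0}\) for \(0\le m\le p_j\). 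So whenever \(j\in\mathfrak{u}\) and \(\ell'_j-1\le p_j\), the factor \(s_j^{\ell'_j-1}\) is either \(s_j^0\) (when \(\ell'_j=p_j+1\)) or zero (when \(\ell'_j=0\)). Each surviving term is therefore \(\bigotimes_j s_j^{\ell_j}\) with \(\ell_j=0\) or \(\ell_j>p_j\), that is, with \(\boldsymbol{\ell}\in\mathcal{W}_{L,\boldsymbol{\omega},\mathbf{p}}\).

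We then read off the coefficient of a fixed \(\boldsymbol{\ell}\in\mathcal{W}_{L,\boldsymbol{\omega},\mathbf{p}}\). Its preimages are indexed by \(\mathfrak{u}\), with \(\ell'_j=\ell_j+1\) if \(j\in\mathfrak{u}\) and \(\ell_j>0\), and \(\ell'_j=p_j+1\) if \(j\in\mathfrak{u}\) and \(\ell_j=0\). These are exactly the admissible \(\boldsymbol{\ell}'\): the choice \(\ell'_j=1\) with \(\ell_j=0\) is excluded because \(\Delta^1_j=0\) when \(p_j\ge1\), and it coincides with \(p_j+1\) when \(p_j=0\). Each such preimage has weighted level
\[
\boldsymbol{\ell}'\cdot\boldsymbol{\omega}=\boldsymbol{\ell}\cdot\boldsymbol{\omega}+\sum_{j\in\mathfrak{u}}\bigl[\omega_j+\mathbf{1}_{\{\ell_j=0\}}p_j\omega_j\bigr].
\]

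This is where the main care is needed. The stated set \(\mathcal{U}_{L,\boldsymbol{\omega},\mathbf{p}}(\boldsymbol{\ell})\) has \(p_j\) rather than \(p_j\omega_j\) in the penalty term. The argument therefore has to be reconciled with the intended indexing convention, either by interpreting the penalty within the weighted level exactly as the definition of \(\mathcal{U}\) reads, or by adjusting it. I would verify this carefully on the two-dimensional example of Figure~\ref{fig: ASG and DASG}.

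Assuming the membership condition \(\boldsymbol{\ell}'\in\mathcal{A}_{L,\boldsymbol{\omega}}\) matches the condition defining \(\mathcal{U}_{L,\boldsymbol{\omega},\mathbf{p}}(\boldsymbol{\ell})\), summing the signs \((-1)^{|\mathfrak{u}|}\) over all valid \(\mathfrak{u}\) gives \(b_{L,\boldsymbol{\omega},\mathbf{p}}(\boldsymbol{\ell})\). Downward closedness of \(\mathcal{A}_{L,\boldsymbol{\omega}}\) ensures the map \(\mathfrak{u}\mapsto\boldsymbol{\ell}'\) is injective and that no other contributions occur. The remaining bookkeeping is routine: each pair \((\boldsymbol{\ell}',\mathfrak{u})\) with nonzero contribution appears exactly once.
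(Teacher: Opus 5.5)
Your argument is the same combination-technique computation the paper uses: expand each difference, discard $\Delta_j^{\ell}$ for $1\le\ell\le p_j$, and collect the signs $(-1)^{|\mathfrak{u}|}$ over the neighbours $\boldsymbol{\ell}'$ of each $\boldsymbol{\ell}\in\mathcal{W}_{L,\boldsymbol{\omega},\mathbf{p}}$. Your bookkeeping is correct, including $\boldsymbol{\ell}'\cdot\boldsymbol{\omega}=\boldsymbol{\ell}\cdot\boldsymbol{\omega}+\sum_{j\in\mathfrak{u}}\omega_j\bigl(1+\mathbf{1}_{\{\ell_j=0\}}p_j\bigr)$.

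The gap is your final step, ``assuming the membership condition matches the condition defining $\mathcal{U}_{L,\boldsymbol{\omega},\mathbf{p}}(\boldsymbol{\ell})$''. That assumption is false. The discrepancy you noticed is an error in the statement, not a convention to be reconciled. Take $d=2$, $\boldsymbol{\omega}=(2,1)$, $\mathbf{p}=(1,2)$ (the parameters of Figure~\ref{fig: ASG and DASG}) and $L=3$.
\begin{itemize}
\item Then $\mathcal{A}_{3,\boldsymbol{\omega}}=\{(0,0),(0,1),(0,2),(0,3),(1,0),(1,1)\}$, and every difference except those at $(0,0)$ and $(0,3)$ vanishes. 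The operator is therefore $s_1^0\otimes s_2^3$, so $b_{3,\boldsymbol{\omega},\mathbf{p}}(\mathbf{0})$ must be $0$.
\item The printed $\mathcal{U}_{3,\boldsymbol{\omega},\mathbf{p}}(\mathbf{0})$ instead contains $\emptyset$, $\{1\}$ (cost $\omega_1+p_1=3$) and $\{2\}$ (cost $\omega_2+p_2=3$). This gives $b_{3,\boldsymbol{\omega},\mathbf{p}}(\mathbf{0})=-1$ and the wrong operator $s_1^0\otimes s_2^3-s_1^0\otimes s_2^0$.
\item With your cost, $\omega_1(1+p_1)=4>3$, so $\{1\}$ drops out and the coefficient is $0$, as it should be.
\end{itemize}
At $L=5$, the level drawn in that figure, the two rules happen to give identical coefficients. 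So the check you propose would not have exposed the problem.

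No proof of the proposition as printed is possible. What your argument does prove, completely, is the version with $\mathcal{U}_{L,\boldsymbol{\omega},\mathbf{p}}(\boldsymbol{\ell})$ defined by $\boldsymbol{\ell}\cdot\boldsymbol{\omega}+\sum_{j\in\mathfrak{u}}\omega_j\bigl(1+\mathbf{1}_{\{\ell_j=0\}}p_j\bigr)\le L$. You should state and conclude with that version. It agrees with the printed condition whenever $\omega_j=1$ or $p_j=0$ for each $j$, for instance for LISGs, where $\boldsymbol{\omega}=\mathbf{1}$.

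The paper's own proof does not detect the problem. It describes the contributing neighbours on the zero coordinates by $|a_j-\ell_j|\in\{0,p_j\}$ rather than $\{0,p_j+1\}$, and never computes their weighted level. Your treatment of those coordinates is the accurate one.

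One minor point: injectivity of $\mathfrak{u}\mapsto\boldsymbol{\ell}'$ does not need downward closedness, since $\ell'_j\neq\ell_j$ exactly for $j\in\mathfrak{u}$. The only property of $\mathcal{A}_{L,\boldsymbol{\omega}}$ you use is that $\boldsymbol{\ell}\le\boldsymbol{\ell}'$ componentwise keeps the weighted level at most $L$. That is what places the canonical $\boldsymbol{\ell}$ in $\mathcal{W}_{L,\boldsymbol{\omega},\mathbf{p}}$.
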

\begin{proof}
This is an instance of the well-known sparse grid combination technique (see e.g. \cite{pflaum1999,DELVOS198299}), which for completeness we give here. For a given \(\boldsymbol{\ell}\in\mathcal{A}_{L,\boldsymbol{\omega}}\), we wish to find the coefficient of \(s_{\mathcal{X}^{p_1}_{\ell_1},\phi_{\nu_1,\lambda_1}}\otimes\cdots\otimes  s_{\mathcal{X}^{p_d}_{\ell_d},\phi_{\nu_d,\lambda_d}}\), denoted \(b_{L,\boldsymbol{\omega},\mathbf{p}}(\boldsymbol{\ell})\), in the expansion of \(S_{\mathcal{A}_{L,\boldsymbol{\omega}},\mathbf{p}}^{(\boldsymbol{\nu},\boldsymbol{\lambda})}\) given in Definition~\ref{def: sparse grid operator}. 

Let \(\boldsymbol{\ell}\in\mathcal{A}_{L,\boldsymbol{\omega}}\) and define \(\mathfrak{w}\coloneqq\{\mathfrak{w}\subset\{1,\dots,d\}\,:\,0<{\ell}_{i}\leq p_i\textrm{ for all }i\in\mathfrak{w}\}\). Then,
\begin{align}
&\bigotimes_{j=1}^d\left(s_{\mathcal{X}_{\ell_j}^{p_j},\phi_{\nu_j,\lambda_j}}-s_{\mathcal{X}_{\ell_j-1}^{p_j},\phi_{\nu_j,\lambda_j}}\right)\nonumber\\ = &\bigotimes_{j\in\mathfrak{w}}\left(s_{\mathcal{X}_{\ell_j}^{p_j},\phi_{\nu_j,\lambda_j}}-s_{\mathcal{X}_{\ell_j-1}^{p_j},\phi_{\nu_j,\lambda_j}}\right) \otimes \bigotimes_{j\in\{1,\dots,d\}\setminus\mathfrak{w}}\left(s_{\mathcal{X}_{\ell_j}^{p_j},\phi_{\nu_j,\lambda_j}}-s_{\mathcal{X}_{\ell_j-1}^{p_j},\phi_{\nu_j,\lambda_j}}\right).\nonumber
\end{align}
By definition of the point sets, for all \(j\in\mathfrak{w}\), we have \(\mathcal{X}_{\ell_j}^{p_j}=\mathcal{X}_{\ell_j+1}^{p_j}=\{0\}\), and therefore \(s_{\mathcal{X}_{\ell_j}^{p_j},\phi_{\nu_j,\lambda_j}}-s_{\mathcal{X}_{\ell_j-1}^{p_j},\phi_{\nu_j,\lambda_j}} = 0\). Thus the only non-zero contributions correspond to multi-indices in the subset \(\mathcal{W}_{L,\boldsymbol{\omega},\mathbf{p}}\subset\mathcal{A}_{L,\boldsymbol{\omega}}\).

Now, let \(\ell_j> p_j\) for all \(1\leq j\leq d\). By construction of the operators as differences between successive interpolants, all non-zero contributions to \(b_{L,\boldsymbol{\omega},\mathbf{p}}(\boldsymbol{\ell})\) arise from the multi-indices \(\mathbf{a}\in\{\mathbf{a}\in\mathcal{W}_{L,\boldsymbol{\omega},\mathbf{p}}\,:\,|\mathbf{a}-\boldsymbol{\ell}|_{\ell^\infty}\in\{0,1\}\}\). For each such multi-index \(\mathbf{a}\), 
\begin{align}
\bigotimes_{j=1}^d\left(s_{\mathcal{X}_{a_j}^{p_j},\phi_{\nu_j,\lambda_j}}-s_{\mathcal{X}_{a_j-1}^{p_j},\phi_{\nu_j,\lambda_j}}\right)=\sum_{\tilde{\mathbf{a}}\in\{0,1\}^d}(-1)^{|\tilde{\mathbf{a}}|_1}\bigotimes_{i=1}^ds_{\mathcal{X}_{a_j-\tilde{a}_j}^{p_j},\phi_{\nu_j,\lambda_j}},\label{eq: binary expansion}
\end{align}
and hence the contribution to \(b_{L,\boldsymbol{\omega},\mathbf{p}}(\boldsymbol{\ell})\) associated to the multi-index \(\mathbf{a}\) is the cofactor in the sum in \eqref{eq: binary expansion} for \(\tilde{\mathbf{a}}=\mathbf{a}-\boldsymbol{\ell}\), given by \((-1)^{|\mathbf{a}-\boldsymbol{\ell}|_1}\). By noticing that each multi-index \(\mathbf{a}\) is uniquely determined by a subset \(\mathfrak{u}\subset\mathcal{U}_{L,\boldsymbol{\omega},\mathbf{p}}(\boldsymbol{\ell})\), where \(j\in\mathfrak{u}\) indicates that \(a_j-l_j=1\), and \(j\in\{0,\dots,d\}\setminus\mathfrak{u}\) indicates \(a_j-l_j=0\), we arrive at the statement of the proposition.

Finally, the same logic applies for \(\boldsymbol{\ell}\in\mathcal{W}_{L,\boldsymbol{\omega},\mathbf{p}}\) such that \(\ell_{\mathfrak{v}}=\mathbf{0}\in\mathbb{N}_0^k\) for some subset \(\mathfrak{v}\subset\{1,\dots,d\}\) of size \(k\), where the telescoping of sums means that the non-zero contributixons arise more generally from the multi-indices \(\mathbf{a}\in\{\mathbf{a}\in\mathcal{W}_{L,\boldsymbol{\omega},\mathbf{p}}\,:|a_j-\ell_j|\in\{0,p_j\}\textrm{ for all }j\in\mathfrak{v}\textrm{ and }\,|{a}_{j}-{\ell}_{j}|\in\{0,1\}\textrm{ for all }j\in\{1,\dots,d\}\setminus\mathfrak{v}\}\).
\end{proof}

\section{Error analysis}\label{sec: double error analysis}

Here we give the proof of Theorem~\ref{thm: doubly anisotropic error}, 
adapted from that of Theorem~\ref{thm: error in L} in \cite{addy2025lengthscaleinformedsparsegridskernel}.

\begin{lemma}[Adapted from Lemma 4, \cite{addy2025lengthscaleinformedsparsegridskernel}]\label{lem: tensor hilbert}
Let \(L\in\mathbb{N}\), \(\mathbf{p}\in\mathbb{N}_0^d\), \(\Gamma\coloneqq (-1/2,1/2)\) and let \(\boldsymbol{\nu},\boldsymbol{\alpha}\in\mathbb{R}^d_{\geq1/2}\) be such that \(\alpha_j\leq\nu_j\) for all \(1\leq j \leq d\). Define the multi-index set \(\mathcal{K}^d_k\coloneqq\{\boldsymbol{l}\in\mathbb{N}_{0}^d:|\{1\leq j\leq d:l_j\neq0\}|=k\}\). Then,
    \begin{align}
    \left\lVert\, I-S_{\mathcal{A}_{L,\boldsymbol{\omega}},\mathbf{p}}^{(\boldsymbol{\nu},2^\mathbf{p})}\,\right\rVert_{\mathcal{N}_{\Phi_{\boldsymbol{\nu},2^{\mathbf{p}}}}(\Gamma^d)\rightarrow\mathcal{N}_{\Phi_{\boldsymbol{\alpha},2^{\mathbf{p}}}}(\Gamma^d)}\leq C^{(\boldsymbol{\nu},\boldsymbol{\alpha})}_{\textnormal{L}\ref{lem: tensor hilbert}}\sum_{k=1}^d\sum_{\boldsymbol{l}\in\mathcal{K}_k^d\setminus \mathcal{A}_{L,\boldsymbol{\omega}}}\prod_{j=1}^d\mathbf{Q}_{\boldsymbol{\nu},\boldsymbol{\alpha}}(\boldsymbol{l},\mathbf{p})_j,\nonumber
    \end{align}
    where
    \begin{align}
        \mathbf{Q}_{\boldsymbol{\nu},\boldsymbol{\alpha}}(\boldsymbol{l},\mathbf{p})_j \coloneqq \begin{cases}
            C^{(\nu_j,\alpha_j)}_{\textnormal{W}}2^{-(\nu_j-\alpha_j)l_j} & \textrm{ if }l_j\geq p_j+1,\\
            0 & \textrm{ if }1\leq l_j\leq p_j, \textrm{ and}\\
            1 & \textrm{ if }l_j = 0,
        \end{cases}\nonumber
    \end{align}
    and the constant \(C^{(\nu,\alpha)}_{\textnormal{W}}\) is independent of \(\mathbf{p}\) (equal to \(C^{(\nu,\alpha)}_{3}\) in \cite{addy2025lengthscaleinformedsparsegridskernel}), and the constant \(C^{(\boldsymbol{\nu},\boldsymbol{\alpha})}_{\textnormal{L}\ref{lem: tensor hilbert}}\) is given by
    \(
        C^{(\boldsymbol{\nu},\boldsymbol{\alpha})}_{\textnormal{L}\ref{lem: tensor hilbert}} = \prod_{j=1}^d\sqrt{\frac{\Gamma(\alpha_j+1/2)\Gamma(\nu_j)}{\Gamma(\alpha_j)\Gamma(\nu_j+1/2)}}.\nonumber
    \)
\end{lemma}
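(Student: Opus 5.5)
The plan is to follow the argument of Lemma~4 in \cite{addy2025lengthscaleinformedsparsegridskernel}. Write the identity as a telescoping sum of hierarchical surplus operators, and only change the index set from \(\mathcal{I}_L\) to \(\mathcal{A}_{L,\boldsymbol{\omega}}\).

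First I define, for each \(j\), the one-dimensional detail operators \(\Delta_{\ell}^{(j)}\coloneqq s_{\mathcal{X}_{\ell}^{p_j},\phi_{\nu_j,2^{p_j}}}-s_{\mathcal{X}_{\ell-1}^{p_j},\phi_{\nu_j,2^{p_j}}}\), with \(s_{\emptyset}=0\). Since the one-dimensional interpolants converge to the identity in the native space as \(\ell\to\infty\), I get \(I=\sum_{\ell\ge0}\Delta^{(j)}_\ell\) on \(\mathcal{N}_{\phi_{\nu_j,2^{p_j}}}(\Gamma)\). Because the native space of the separable kernel is the Hilbert tensor product (Proposition~\ref{prop: matern native spaces}(b)), the identity on \(\mathcal{N}_{\Phi_{\boldsymbol{\nu},2^{\mathbf{p}}}}(\Gamma^d)\) is \(\sum_{\boldsymbol{l}\in\mathbb{N}_0^d}\bigotimes_j\Delta^{(j)}_{l_j}\). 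Subtracting Definition~\ref{def: sparse grid operator} with \(\mathcal{I}=\mathcal{A}_{L,\boldsymbol{\omega}}\) gives
\[
I-S_{\mathcal{A}_{L,\boldsymbol{\omega}},\mathbf{p}}^{(\boldsymbol{\nu},2^{\mathbf{p}})}=\sum_{\boldsymbol{l}\in\mathbb{N}_0^d\setminus\mathcal{A}_{L,\boldsymbol{\omega}}}\bigotimes_{j=1}^d\Delta^{(j)}_{l_j}.
\]
The multi-index \(\mathbf{0}\) lies in \(\mathcal{A}_{L,\boldsymbol{\omega}}\) for \(L\ge0\), so I can partition the complement according to the number \(k\ge1\) of nonzero entries, i.e.\ over \(\mathcal{K}^d_k\setminus\mathcal{A}_{L,\boldsymbol{\omega}}\). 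The triangle inequality then reduces the claim to bounding each tensor-product term.

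The operator norm of a tensor product of operators between Hilbert tensor product spaces is the product of the one-dimensional norms. So I need \(\|\Delta^{(j)}_{l_j}\|_{\mathcal{N}_{\phi_{\nu_j,2^{p_j}}}\to\mathcal{N}_{\phi_{\alpha_j,2^{p_j}}}}\) in three cases.
\begin{itemize}
\item For \(l_j=0\) the operator is the interpolant at \(\{0\}\). It is a native-space orthogonal projection, hence of norm at most \(1\) in \(\mathcal{N}_{\phi_{\nu_j,2^{p_j}}}\). The embedding \(\mathcal{N}_{\phi_{\nu_j,\lambda}}\hookrightarrow\mathcal{N}_{\phi_{\alpha_j,\lambda}}\) costs the factor \(\sqrt{\Gamma(\alpha_j+1/2)\Gamma(\nu_j)/(\Gamma(\alpha_j)\Gamma(\nu_j+1/2))}\), obtained by comparing the Fourier transforms of the Matérn kernels and their normalisation constants.
\item For \(1\le l_j\le p_j\), Definition~\ref{def:chi_l^p} gives \(\mathcal{X}^{p_j}_{l_j}=\mathcal{X}^{p_j}_{l_j-1}=\{0\}\), so \(\Delta^{(j)}_{l_j}=0\).
\item For \(l_j\ge p_j+1\), I bound \(\|\Delta^{(j)}_{l_j}\|\le\|I-s_{\mathcal{X}^{p_j}_{l_j}}\|+\|I-s_{\mathcal{X}^{p_j}_{l_j-1}}\|\) and invoke the one-dimensional lengthscale-explicit error estimate (the bound with constant \(C^{(\nu,\alpha)}_3\) in \cite{addy2025lengthscaleinformedsparsegridskernel}). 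It states that for a point set of fill distance \(h\asymp2^{-(l_j-p_j)}\) and lengthscale \(\lambda=2^{p_j}\), the error is bounded by a \(\mathbf{p}\)-independent constant times \((h/\lambda)^{\nu_j-\alpha_j}\asymp2^{-(\nu_j-\alpha_j)l_j}\). Absorbing the factor from the coarser level gives \(C^{(\nu_j,\alpha_j)}_{\textnormal{W}}2^{-(\nu_j-\alpha_j)l_j}\).
\end{itemize}
Pulling the embedding constants out as the global factor \(C^{(\boldsymbol{\nu},\boldsymbol{\alpha})}_{\textnormal{L}\ref{lem: tensor hilbert}}\) yields exactly \(\prod_j\mathbf{Q}_{\boldsymbol{\nu},\boldsymbol{\alpha}}(\boldsymbol{l},\mathbf{p})_j\).

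The main obstacle is the third case. The constant must genuinely not depend on \(p_j\), which requires the scaling argument that the error depends on \(h\) and \(\lambda\) only through \(h/\lambda\). This needs care because the domain \(\Gamma\) is fixed while the lengthscale varies. Since the lemma is stated as adapted from \cite{addy2025lengthscaleinformedsparsegridskernel}, I will import this one-dimensional estimate verbatim. I will only check that nothing in it used the isotropic index set, which is clear because the estimate is purely one-dimensional. The rest, including the unconditional convergence of the tensorised telescoping sum, carries over unchanged from the isotropic argument, since only the summation domain differs.
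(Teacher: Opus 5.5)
Your route is the paper's own: its proof just reruns Lemma~4 of the source with $\mathcal{I}_L$ replaced by $\mathcal{A}_{L,\boldsymbol{\omega}}$. The index-set part of your sketch is fine: writing $I-S$ as the sum of $\bigotimes_j\Delta^{(j)}_{l_j}$ over $\boldsymbol{l}\notin\mathcal{A}_{L,\boldsymbol{\omega}}$, grouping by $\mathcal{K}^d_k$, and $\Delta^{(j)}_{l_j}=0$ on the plateau $1\le l_j\le p_j$. The one-dimensional bounds you reconstruct, however, do not hold.

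For $l_j=0$, note that $\Delta^{(j)}_0=s_{\{0\}}$ fixes $k_0\coloneqq\phi_{\nu_j,2^{p_j}}(\cdot,0)$, whose $\mathcal{N}_{\phi_{\nu_j,2^{p_j}}}$-norm is $\sigma$. The reproducing property in $\mathcal{N}_{\phi_{\alpha_j,2^{p_j}}}(\Gamma)$ gives $\sigma^2=k_0(0)\le\sigma\|k_0\|_{\mathcal{N}_{\phi_{\alpha_j,2^{p_j}}}}$. Hence $\|\Delta^{(j)}_0\|\ge1$, and the embedding likewise has norm at least $1$. Since $x\mapsto\Gamma(x+1/2)/\Gamma(x)$ is strictly increasing, the Gamma factor you attribute to the embedding is strictly below $1$ whenever $\alpha_j<\nu_j$; it equals $1/\sqrt2$ for $(\nu_j,\alpha_j)=(3/2,1/2)$. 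So it is not an embedding constant. Pulling it out over all $d$ coordinates is also unjustified, including those with $l_j\ge p_j+1$, where you never produced it. A Gamma ratio does arise when converting between the Mat\'ern native norms and Gamma-free spectrally weighted norms. But that conversion acts on the whole tensor norm at once, and with a common $\sigma$ its Gamma part is $\sqrt{\Gamma(\nu_j+1/2)\Gamma(\alpha_j)/(\Gamma(\nu_j)\Gamma(\alpha_j+1/2))}\ge1$ per coordinate, the reciprocal of the displayed factor.

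The more serious failure is at $l_j=p_j+1$. You correctly flagged the fixed-domain, varying-lengthscale issue as the crux, but it does not resolve the way you assume. Your triangle inequality needs $\|I-s_{\mathcal{X}^{p_j}_{p_j}}\|=\|I-s_{\{0\}}\|\lesssim2^{-(\nu_j-\alpha_j)p_j}$. After rescaling to unit lengthscale the domain has length $2^{-p_j}$, and a single node reproduces only constants, so the rate saturates.

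Concretely, take $(\nu,\alpha)=(3/2,1/2)$, $\sigma=1$, $\lambda=2^p$, $\kappa=\sqrt3/\lambda$, and operator norms from $\mathcal{N}_{\phi_{3/2,\lambda}}(\Gamma)$ to $\mathcal{N}_{\phi_{1/2,\lambda}}(\Gamma)$. Let $f(x)=\partial_y\phi_{3/2,\lambda}(x,y)|_{y=0}=\kappa^2xe^{-\kappa|x|}$, the representer of $u\mapsto u'(0)$. Then $\|f\|^2_{\mathcal{N}_{\phi_{3/2,\lambda}}}=\kappa^2$, and $s_{\{0\}}f=0$. Pair $f$ with $h=\phi_{1/2,\lambda}(\cdot,\tfrac14)-\phi_{1/2,\lambda}(\cdot,-\tfrac14)$, for which $\|h\|^2=2-2e^{-1/(2\lambda)}\le\lambda^{-1}$. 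This gives $\|f\|_{\mathcal{N}_{\phi_{1/2,\lambda}}}\ge\sqrt\lambda\,(f(\tfrac14)-f(-\tfrac14))=\tfrac12\kappa^2e^{-\kappa/4}\sqrt\lambda$. Hence $\|I-s_{\{0\}}\|\ge\tfrac{\sqrt3}{2}e^{-\kappa/4}2^{-p/2}$, not $O(2^{-p})$. The same test gives the same lower bound for $\Delta_{p+1}=s_{\mathcal{X}_1}-s_{\{0\}}$, because $s_{\mathcal{X}_1}f$ agrees with $f$ at $\pm\tfrac14$.

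This is not only a defect of your argument. For $d=1$, $\boldsymbol{\omega}=(1)$ and $1\le L\le p$, one has $S=s_{\{0\}}$, while the right-hand side of the lemma equals $C\,2^{-p}$ with $C$ independent of $p$. So the displayed inequality itself fails for large $p$. Importing the source's one-dimensional estimate verbatim therefore cannot close the gap. The $\mathbf{p}$-independence of $C_{\textnormal{W}}$ at the first post-plateau level is exactly what breaks. A correct version needs a $p_j$-dependent factor at that level, or a separate treatment of the one-point coarse level.
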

\begin{proof}
    This follows that of Lemma~4, replacing the multi-index set $\mathcal{I}^d_L$ with $\mathcal{A}_{L,\boldsymbol{\omega}}$.
\end{proof}

\begin{lemma}[Lemma 7, \cite{addy2025lengthscaleinformedsparsegridskernel}]\label{lem: bijection}
    Let \(d\in\mathbb{N}\) and \(k\in\mathbb{N}_0\). We have that \(\mathcal{K}_k^d\cong\{\mathbf{0}\}\cup\mathcal{P}^d_k\times\mathbb{N}^k\) with bijection, for a given \(\boldsymbol{l}\in\mathbb{N}_0^d\),
    \begin{align}
        \boldsymbol{\rho}(\boldsymbol{l}) = \begin{cases}
            \mathbf{0}\in\mathbb{N}_0^k&\textrm{if }\boldsymbol{l}=\mathbf{0}, \textrm{ and}\\
            \left(\mathfrak{u}(\boldsymbol{l}),\boldsymbol{l}_{\mathfrak{u}(\boldsymbol{l})}\right)&\textrm{otherwise},
            \end{cases}\nonumber
    \end{align}
    where \(\mathfrak{u}(\boldsymbol{l})\coloneqq\{j\in\{1,\dots,d\}:l_j=0\}\subset\mathcal{P}^d_k\) and, if \(|\mathfrak{u}(\boldsymbol{l})|=k\), we define, for a given \(\mathbf{b}\in\mathbb{N}_0^d\), \(\mathbf{b}_{\mathfrak{u}(\boldsymbol{l})}\coloneqq(b_{\mathfrak{u}(\boldsymbol{l})_1},\dots,b_{\mathfrak{u}(\boldsymbol{l})_k})\).
\end{lemma}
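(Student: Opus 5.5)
The proof is a direct verification that the map $\boldsymbol{\rho}$ has a two-sided inverse. One point needs fixing first. For the statement to be consistent with the definition of $\mathcal{K}_k^d$, the set $\mathfrak{u}(\boldsymbol{l})$ must be read as the \emph{support} $\{j\in\{1,\dots,d\}:l_j\neq0\}$, not the zero set. With the zero set, $|\mathfrak{u}(\boldsymbol{l})|=d-k$ and the entries $\boldsymbol{l}_{\mathfrak{u}(\boldsymbol{l})}$ would all vanish, so they could not lie in $\mathbb{N}^k$. I would state this reading explicitly at the start.

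I would also separate off the degenerate index. For $k=0$, the only multi-index with empty support is $\mathbf{0}$, so $\mathcal{K}_0^d=\{\mathbf{0}\}$. It is matched with the isolated point $\mathbf{0}$ in the target, and $\mathcal{P}^d_0\times\mathbb{N}^0$ plays no further role. For $k\geq1$ we have $\mathbf{0}\notin\mathcal{K}_k^d$, so only the second branch of $\boldsymbol{\rho}$ is used.

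The core case is $k\geq1$, in three steps.
\begin{enumerate}
\item \emph{Well-definedness.} Take $\boldsymbol{l}\in\mathcal{K}_k^d$. Its support $\mathfrak{u}(\boldsymbol{l})$ has exactly $k$ elements by the definition of $\mathcal{K}_k^d$. Listing them in increasing order gives an element of $\mathcal{P}^d_k$. Every entry of $\boldsymbol{l}_{\mathfrak{u}(\boldsymbol{l})}$ is a nonzero element of $\mathbb{N}_0$, hence lies in $\mathbb{N}$. So $\boldsymbol{\rho}(\boldsymbol{l})\in\mathcal{P}^d_k\times\mathbb{N}^k$.
\item \emph{The inverse.} Define $\boldsymbol{\tau}(\mathfrak{u},\mathbf{m})$ to be the multi-index $\boldsymbol{l}\in\mathbb{N}_0^d$ with $l_{\mathfrak{u}_i}=m_i$ for $i=1,\dots,k$ and $l_j=0$ for $j\notin\mathfrak{u}$. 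Since every $m_i\geq1$, the support of $\boldsymbol{\tau}(\mathfrak{u},\mathbf{m})$ is exactly $\mathfrak{u}$. It therefore has exactly $k$ nonzero entries, so $\boldsymbol{\tau}(\mathfrak{u},\mathbf{m})\in\mathcal{K}_k^d$.
\item \emph{Two-sided inverse.} We have $\boldsymbol{\rho}\circ\boldsymbol{\tau}=\mathrm{id}$, because the support of $\boldsymbol{\tau}(\mathfrak{u},\mathbf{m})$ is $\mathfrak{u}$ and its restriction to $\mathfrak{u}$ is $\mathbf{m}$. We have $\boldsymbol{\tau}\circ\boldsymbol{\rho}=\mathrm{id}$, because $\boldsymbol{l}$ is recovered from its values on its support together with zeros off the support.
\end{enumerate}

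There is no real obstacle here; the argument is bookkeeping. The only care needed is the support-versus-zero-set reading above, and the positivity $m_i\geq1$. That positivity is exactly what makes the support of $\boldsymbol{\tau}(\mathfrak{u},\mathbf{m})$ equal to $\mathfrak{u}$, and hence what makes $\boldsymbol{\rho}\circ\boldsymbol{\tau}=\mathrm{id}$ hold.

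I would end by recording how the lemma is used later. It lets the sum over $\mathcal{K}_k^d\setminus\mathcal{A}_{L,\boldsymbol{\omega}}$ in Lemma~\ref{lem: tensor hilbert} be rewritten as $\sum_{\mathfrak{u}\in\mathcal{P}^d_k}\sum_{\mathbf{m}\in\mathbb{N}^k}$. Under this rewriting the condition $\boldsymbol{l}\cdot\boldsymbol{\omega}>L$ becomes $\mathbf{m}\cdot\boldsymbol{\omega}_{\mathfrak{u}}>L$, since $\boldsymbol{l}$ vanishes off $\mathfrak{u}$.
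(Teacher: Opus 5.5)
Your proof is correct. The paper gives no argument of its own here, since it imports the lemma from the earlier LISG paper, so there is nothing to compare against; constructing the two-sided inverse $\boldsymbol{\tau}$ directly is the natural bookkeeping argument, and your reading of $\mathfrak{u}(\boldsymbol{l})$ as the support $\{j:l_j\neq0\}$ rather than the zero set is needed for the statement to make sense. Your split into $k=0$ and $k\geq1$ is also the right reading of the target set: for $k\geq1$ the map $\boldsymbol{\rho}$ on $\mathcal{K}_k^d$ never attains $\mathbf{0}$, so the bijection is really onto $\mathcal{P}^d_k\times\mathbb{N}^k$ alone, which is the form used in the complement statement of Corollary~\ref{cor: isomporphic double}.
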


\begin{corollary}[Adapted from Corollary 4, \cite{addy2025lengthscaleinformedsparsegridskernel}]\label{cor: level sets double}
    Let \(d,k\in\mathbb{N}\), \(L\in\mathbb{N}_0\), and fix \(\boldsymbol{\omega}\in\mathbb{R}_{>0}^d\). Let \(\mathbf{0}\neq\boldsymbol{l}\in\mathcal{K}_k^d\) and define \(\boldsymbol{\rho}(\boldsymbol{l})=(\mathfrak{u},\mathbf{a})\). Then \(\boldsymbol{l}\in\mathcal{A}_{L,\boldsymbol{\omega}}\) if and only if \(\mathbf{a}\in\mathcal{A}_{L,\boldsymbol{\omega}_{\mathfrak{u}}}\subset\mathbb{N}_0^k\).
\end{corollary}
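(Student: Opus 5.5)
The proof is a direct computation of the weighted sum \(\boldsymbol{\omega}\cdot\boldsymbol{l}\). The only point needing care is which index set \(\mathfrak{u}\) the bijection of Lemma~\ref{lem: bijection} picks out. Since \(\boldsymbol{l}\in\mathcal{K}^d_k\) has exactly \(k\) non-zero entries and \(\boldsymbol{\rho}\) maps into \(\mathcal{P}^d_k\times\mathbb{N}^k\), with strictly positive second component, \(\mathfrak{u}=\mathfrak{u}(\boldsymbol{l})\) must be read as the \emph{support} \(\{j:l_j\neq 0\}\). I would fix this interpretation first, writing \(\mathfrak{u}=\{\mathfrak{u}_1<\dots<\mathfrak{u}_k\}\) and \(\mathbf{a}=\boldsymbol{l}_{\mathfrak{u}}=(l_{\mathfrak{u}_1},\dots,l_{\mathfrak{u}_k})\in\mathbb{N}^k\). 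Read literally as the zero set, \(\boldsymbol{l}_{\mathfrak{u}}\) would be the zero vector and the map would not be a bijection onto \(\mathcal{P}^d_k\times\mathbb{N}^k\).

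Next I split the weighted sum according to \(\mathfrak{u}\) and its complement:
\begin{align}
\boldsymbol{\omega}\cdot\boldsymbol{l}=\sum_{j=1}^d\omega_j l_j=\sum_{j\in\mathfrak{u}}\omega_j l_j+\sum_{j\notin\mathfrak{u}}\omega_j\cdot 0=\sum_{i=1}^k\omega_{\mathfrak{u}_i}a_i=\boldsymbol{\omega}_{\mathfrak{u}}\cdot\mathbf{a}.\nonumber
\end{align}
Only the vanishing of \(l_j\) off \(\mathfrak{u}\) is used here. Positivity of \(\boldsymbol{\omega}\) is not even needed for this identity.

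Finally, I compare with the definition \eqref{eq: anisotropic index set}. By that definition, \(\boldsymbol{l}\in\mathcal{A}_{L,\boldsymbol{\omega}}\) iff \(\boldsymbol{\omega}\cdot\boldsymbol{l}\leq L\), and \(\mathbf{a}\in\mathcal{A}_{L,\boldsymbol{\omega}_{\mathfrak{u}}}\subset\mathbb{N}_0^k\) iff \(\boldsymbol{\omega}_{\mathfrak{u}}\cdot\mathbf{a}\leq L\). These two inequalities coincide by the identity above, which gives both directions of the equivalence at once.

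There is no genuine obstacle. The only step requiring attention is the bookkeeping in the first paragraph, namely identifying \(\mathfrak{u}\) with the support of \(\boldsymbol{l}\) and keeping \(\boldsymbol{\omega}_{\mathfrak{u}}\) in the same ordering as \(\mathbf{a}\).
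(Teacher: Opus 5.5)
Your proof is correct and is essentially the paper's argument: both reduce the two memberships to comparing \(\sum_{j} l_j\omega_j\) with \(\sum_{i} a_i\omega_{\mathfrak{u}_i}\). The paper proves the forward direction with an inequality that uses \(l_j\geq 0\) and \(\omega_j>0\), and uses the exact identity only for the converse, whereas your single identity gives both directions at once. Your reading of \(\mathfrak{u}(\boldsymbol{l})\) as the support of \(\boldsymbol{l}\) is also the one the paper's proof actually relies on, since it invokes \(l_j=0\) for \(j\notin\mathfrak{u}(\boldsymbol{l})\); so the condition \(l_j=0\) in Lemma~\ref{lem: bijection} is indeed a typo.
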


\begin{proof}
Let \(\boldsymbol{l}\in \mathcal{A}_{L,\boldsymbol{\omega}}\). Then \(\sum_{j=1}^dl_j\omega_j\leq L\).
By definition of \(\boldsymbol{\rho}\), \(\mathbf{a}=\boldsymbol{l}_{\mathfrak{u}(\boldsymbol{l})}\), and hence \(\sum_{i=1}^ka_i\omega_{\mathfrak{u}_i}=\sum_{i=1}^kl_{\mathfrak{u}(\boldsymbol{l})_i}\omega_{\mathfrak{u}_i}\leq\sum_{j=1}^dl_j\omega_j\leq L\), since \(l_j\geq0\) and \(\omega_j>0\). Thus \(\mathbf{a}\in\mathcal{A}_{L,\boldsymbol{\omega}_{\mathfrak{u}}}\).

For the opposite direction, let \(\mathbf{a}\in\mathcal{A}_{L,\boldsymbol{\omega}_{\mathfrak{u}}}\). By definition of \(\boldsymbol{\rho}\), \(l_j=0\) if \(j\notin\mathfrak{u}(\boldsymbol{l})\). Since \(\boldsymbol{l}\in\mathcal{K}^d_k\), we have \(|\mathfrak{u}(\boldsymbol{l})|=k\), and hence \(\sum_{j=1}^dl_j\omega_j=\sum_{i=1}^kl_{\mathfrak{u}(\boldsymbol{l})_i}\omega_{\mathfrak{u}_i}=\sum_{i=1}^ka_i\omega_{\mathfrak{u}_i}\leq L\). Thus \(\boldsymbol{l}\in\mathcal{A}_{L,\boldsymbol{\omega}}\) also.
\end{proof}

\begin{corollary}[Adapted from Corollary 5, \cite{addy2025lengthscaleinformedsparsegridskernel}]\label{cor: isomporphic double}
    Let \(d,k\in\mathbb{N}\), \(L\in\mathbb{N}_0\), and fix \(\boldsymbol{\omega}\in\mathbb{R}_{>0}^d\). Then
    \begin{align}
    \mathcal{K}_k^d\cap\mathcal{A}_{L,\boldsymbol{\omega}}\cong\{\mathbf{0}\}\cup\bigsqcup_{\mathfrak{u}\in\mathcal{P}^d_k}\mathbb{N}^k\cap\mathcal{A}_{L,\boldsymbol{\omega}_{\mathfrak{u}}}\nonumber,
    \end{align}
    and
    \begin{align}
    \mathcal{K}_k^d\setminus\mathcal{A}_{L,\boldsymbol{\omega}}\cong\bigsqcup_{\mathfrak{u}\in\mathcal{P}^d_k}\mathbb{N}^k\setminus\mathcal{A}_{L,\boldsymbol{\omega}_{\mathfrak{u}}},\nonumber
    \end{align}
    where the disjoint unions are defined by \(\bigsqcup_{a\in A}B_a\coloneqq\bigcup_{a\in A}\{(a,b)\,:\,b\in B_a\}\).
\end{corollary}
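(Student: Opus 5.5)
The plan is to deduce the statement directly from Lemma~\ref{lem: bijection} and Corollary~\ref{cor: level sets double}, restricting the bijection \(\boldsymbol{\rho}\) to each side of the partition of \(\mathcal{K}_k^d\) induced by \(\mathcal{A}_{L,\boldsymbol{\omega}}\).

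\emph{Step 1: the general identification.} By Lemma~\ref{lem: bijection}, \(\boldsymbol{\rho}\) is a bijection from \(\mathcal{K}_k^d\) onto \(\{\mathbf{0}\}\cup\mathcal{P}^d_k\times\mathbb{N}^k\). Here we read \(\mathfrak{u}(\boldsymbol{l})\) as the support \(\{j:l_j\neq0\}\), which is what makes \(\boldsymbol{l}_{\mathfrak{u}(\boldsymbol{l})}\in\mathbb{N}^k\). The product set can be written as a disjoint union: \(\mathcal{P}^d_k\times\mathbb{N}^k=\bigsqcup_{\mathfrak{u}\in\mathcal{P}^d_k}\mathbb{N}^k\), using the convention \(\bigsqcup_{a\in A}B_a=\bigcup_{a}\{(a,b):b\in B_a\}\). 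The element \(\mathbf{0}\) only lies in \(\mathcal{K}_0^d\), so for \(k\geq1\) it plays no role beyond the notation.

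\emph{Step 2: image of the intersection.} Let \(\boldsymbol{l}\neq\mathbf{0}\) be in \(\mathcal{K}_k^d\) with \(\boldsymbol{\rho}(\boldsymbol{l})=(\mathfrak{u},\mathbf{a})\). Corollary~\ref{cor: level sets double} gives \(\boldsymbol{l}\in\mathcal{A}_{L,\boldsymbol{\omega}}\) if and only if \(\mathbf{a}\in\mathcal{A}_{L,\boldsymbol{\omega}_{\mathfrak{u}}}\). Hence \(\boldsymbol{\rho}\) maps \(\mathcal{K}_k^d\cap\mathcal{A}_{L,\boldsymbol{\omega}}\) into \(\{\mathbf{0}\}\cup\bigsqcup_{\mathfrak{u}}\mathbb{N}^k\cap\mathcal{A}_{L,\boldsymbol{\omega}_{\mathfrak{u}}}\). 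For surjectivity, take any \((\mathfrak{u},\mathbf{a})\) with \(\mathbf{a}\in\mathbb{N}^k\cap\mathcal{A}_{L,\boldsymbol{\omega}_{\mathfrak{u}}}\). Its preimage \(\boldsymbol{l}\) under \(\boldsymbol{\rho}\) sets \(l_{\mathfrak{u}_i}=a_i\) and all other coordinates to zero. This \(\boldsymbol{l}\) lies in \(\mathcal{K}_k^d\), and by the same corollary it also lies in \(\mathcal{A}_{L,\boldsymbol{\omega}}\). Injectivity is inherited from \(\boldsymbol{\rho}\).

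\emph{Step 3: image of the complement.} Negating the equivalence in Step 2 shows that \(\boldsymbol{\rho}\) maps \(\mathcal{K}_k^d\setminus\mathcal{A}_{L,\boldsymbol{\omega}}\) bijectively onto \(\bigsqcup_{\mathfrak{u}}\mathbb{N}^k\setminus\mathcal{A}_{L,\boldsymbol{\omega}_{\mathfrak{u}}}\). Equivalently, it is the complement of the Step 2 image inside the full image from Step 1. The point \(\mathbf{0}\) does not appear here, because \(\mathbf{0}\in\mathcal{A}_{L,\boldsymbol{\omega}}\) whenever \(L\geq0\).

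The only delicate point is bookkeeping rather than a real obstacle. Lemma~\ref{lem: bijection} is stated with \(\mathfrak{u}(\boldsymbol{l})=\{j:l_j=0\}\), which has to be read as the support for the claimed codomain \(\mathbb{N}^k\) to make sense. The \(\{\mathbf{0}\}\) term also needs the care described above. Once these are handled, each claim follows from restricting a bijection to a subset and its complement.
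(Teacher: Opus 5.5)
Your proposal is correct and follows the paper's own route. You restrict the bijection \(\boldsymbol{\rho}\) of Lemma~\ref{lem: bijection} to \(\mathcal{K}_k^d\cap\mathcal{A}_{L,\boldsymbol{\omega}}\) and to its complement via the membership equivalence of Corollary~\ref{cor: level sets double}, and use \(\mathbf{0}\in\mathcal{A}_{L,\boldsymbol{\omega}}\) to place the \(\{\mathbf{0}\}\) term.

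Your two bookkeeping remarks are also right and go beyond the paper's one-line proof. First, \(\mathfrak{u}(\boldsymbol{l})\) must be read as the support \(\{j:l_j\neq0\}\). Second, since \(\mathbf{0}\notin\mathcal{K}_k^d\) for \(k\geq1\), the restricted \(\boldsymbol{\rho}\) in the first display is a bijection onto the disjoint union alone, so the \(\{\mathbf{0}\}\) there is purely notational.
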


\begin{proof}
    This follows immediately from Lemma~\ref{lem: bijection} and Corollary~\ref{cor: level sets double} with \(\boldsymbol{\rho}\) acting as a bijection in both cases, where we note that \(\mathbf{0}\in\mathcal{A}_{L,\boldsymbol{\omega}_{\mathfrak{u}}}\) for all \(L>0\).
\end{proof}

\begin{lemma}[Adapted from Lemma 5, \cite{addy2025lengthscaleinformedsparsegridskernel}]\label{lem: supersets double}
    Let \(L\in\mathbb{N}_{0}\), \(\mathbf{p}\in\mathbb{N}_0^d\), and \(\boldsymbol{\omega}\in\mathbb{R}_{>0}^d\). Then,
    \begin{align}
        \sum_{\boldsymbol{l}\in\mathcal{K}^d_k\setminus \mathcal{A}_{L,\boldsymbol{\omega}}}\prod_{j=1}^d{\mathbf{Q}_{\boldsymbol{\nu},\boldsymbol{\alpha}}(\boldsymbol{l},\mathbf{p})_j} =\sum_{\mathfrak{u}\in\mathcal{P}^d_k}C_{\textnormal{L}\ref{lem: supersets double}}^{(\boldsymbol{\nu}_{\mathfrak{u}},\boldsymbol{\alpha}_{\mathfrak{u}})}\sum_{\boldsymbol{l}\in\mathbb{N}_0^k\setminus\mathcal{A}_{\max\{0,L-|\mathbf{p}_{\mathfrak{u}}|-k\},\boldsymbol{\omega}_{\mathfrak{u}}}}2^{-(\boldsymbol{\nu}_{\mathfrak{u}}-\boldsymbol{\alpha}_{\mathfrak{u}})\cdot(\boldsymbol{l}+\mathbf{p}_{\mathfrak{u}}+\mathbf{1})},\nonumber
    \end{align}
    where \(C_{\textnormal{L}\ref{lem: supersets double}}^{(\boldsymbol{\nu}_{\mathfrak{u}},\boldsymbol{\alpha}_{\mathfrak{u}})}\coloneqq\prod_{i=1}^kC_{\textnormal{W}}^{(\nu_{\mathfrak{u}_i},\alpha_{\mathfrak{u}_i})}\).
\end{lemma}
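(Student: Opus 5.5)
The plan is a direct reindexing of the sum, using the bijection of Lemma~\ref{lem: bijection} and the level-set characterisation of Corollaries~\ref{cor: level sets double} and~\ref{cor: isomporphic double}. No analytic estimates are needed: both sides are sums of nonnegative terms over index sets, and I will show the two index sets correspond term by term.

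First, I apply the second identity of Corollary~\ref{cor: isomporphic double}. It splits the left-hand sum over \(\mathcal{K}^d_k\setminus\mathcal{A}_{L,\boldsymbol{\omega}}\) into a sum over \(\mathfrak{u}\in\mathcal{P}^d_k\) and, for each \(\mathfrak{u}\), a sum over \(\mathbf{a}\in\mathbb{N}^k\setminus\mathcal{A}_{L,\boldsymbol{\omega}_{\mathfrak{u}}}\). Here \(\boldsymbol{l}\) is recovered from \(\boldsymbol{\rho}(\boldsymbol{l})=(\mathfrak{u},\mathbf{a})\) by setting \(l_{\mathfrak{u}_i}=a_i\) and \(l_j=0\) for \(j\notin\mathfrak{u}\). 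For such \(\boldsymbol{l}\), every coordinate outside \(\mathfrak{u}\) contributes the factor \(1\), by the case \(l_j=0\) in the definition of \(\mathbf{Q}_{\boldsymbol{\nu},\boldsymbol{\alpha}}\). Hence
\[
\prod_{j=1}^d\mathbf{Q}_{\boldsymbol{\nu},\boldsymbol{\alpha}}(\boldsymbol{l},\mathbf{p})_j=\prod_{i=1}^k\mathbf{Q}_{\boldsymbol{\nu}_{\mathfrak{u}},\boldsymbol{\alpha}_{\mathfrak{u}}}(\mathbf{a},\mathbf{p}_{\mathfrak{u}})_i .
\]

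Next, I use the case \(1\leq l_j\leq p_j\), where \(\mathbf{Q}\) vanishes. Because of this, only those \(\mathbf{a}\) with \(a_i\geq p_{\mathfrak{u}_i}+1\) for every \(i\) contribute. On these terms the product equals
\[
C_{\textnormal{L}\ref{lem: supersets double}}^{(\boldsymbol{\nu}_{\mathfrak{u}},\boldsymbol{\alpha}_{\mathfrak{u}})}\,2^{-(\boldsymbol{\nu}_{\mathfrak{u}}-\boldsymbol{\alpha}_{\mathfrak{u}})\cdot\mathbf{a}} .
\]
I then substitute \(\mathbf{a}=\boldsymbol{l}'+\mathbf{p}_{\mathfrak{u}}+\mathbf{1}\) with \(\boldsymbol{l}'\in\mathbb{N}_0^k\). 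This map is a bijection from \(\mathbb{N}_0^k\) onto \(\{\mathbf{a}\in\mathbb{N}^k:a_i\geq p_{\mathfrak{u}_i}+1\}\), and it turns the exponent into \(-(\boldsymbol{\nu}_{\mathfrak{u}}-\boldsymbol{\alpha}_{\mathfrak{u}})\cdot(\boldsymbol{l}'+\mathbf{p}_{\mathfrak{u}}+\mathbf{1})\), which is exactly the summand on the right.

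It remains to translate the exclusion condition \(\mathbf{a}\notin\mathcal{A}_{L,\boldsymbol{\omega}_{\mathfrak{u}}}\), i.e.\ \(\boldsymbol{\omega}_{\mathfrak{u}}\cdot\mathbf{a}>L\), into a condition on \(\boldsymbol{l}'\). After the substitution it reads \(\boldsymbol{\omega}_{\mathfrak{u}}\cdot\boldsymbol{l}'>L-\boldsymbol{\omega}_{\mathfrak{u}}\cdot(\mathbf{p}_{\mathfrak{u}}+\mathbf{1})\), that is, \(\boldsymbol{l}'\notin\mathcal{A}_{L-\boldsymbol{\omega}_{\mathfrak{u}}\cdot(\mathbf{p}_{\mathfrak{u}}+\mathbf{1}),\boldsymbol{\omega}_{\mathfrak{u}}}\).

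This level shift is the step I expect to be the main obstacle. The statement writes the shifted level as \(L-|\mathbf{p}_{\mathfrak{u}}|-k\) (truncated at \(0\)), which coincides with \(L-\boldsymbol{\omega}_{\mathfrak{u}}\cdot(\mathbf{p}_{\mathfrak{u}}+\mathbf{1})\) when \(\boldsymbol{\omega}_{\mathfrak{u}}=\mathbf{1}\). To obtain the equality as worded for general \(\boldsymbol{\omega}\), the shift has to be read as the weighted shift \(\boldsymbol{\omega}_{\mathfrak{u}}\cdot(\mathbf{p}_{\mathfrak{u}}+\mathbf{1})\). I will make that identification explicit in the proof rather than prove a one-sided bound.

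The truncation at \(0\) also needs a boundary check. If the shifted level is negative, every \(\boldsymbol{l}'\in\mathbb{N}_0^k\) satisfies the exclusion condition, so the whole \(\mathbb{N}_0^k\) is summed. I will treat this case by the convention that \(\mathcal{A}\) at a negative level is empty, mirroring the convention \(\varepsilon:=0\) for \(L\leq0\) after \eqref{eq: anisotropic bound}, and check that the \(\max\{0,\cdot\}\) in the statement is consistent with it. With the boundary case handled, summing over \(\mathfrak{u}\in\mathcal{P}^d_k\) yields the claimed identity.
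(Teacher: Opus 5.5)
Your reindexing is the argument the paper intends. Its proof only says that Lemma~5 of the earlier work carries over with $\mathcal{I}_L$ replaced by $\mathcal{A}_{L,\boldsymbol{\omega}}$: pass through $\boldsymbol{\rho}$, use $\mathbf{Q}=1$ off the support and $\mathbf{Q}=0$ for $1\leq l_j\leq p_j$, then shift $\mathbf{a}=\boldsymbol{l}'+\mathbf{p}_{\mathfrak{u}}+\mathbf{1}$.

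Your observation about the level is correct, and it is not a matter of reading the statement charitably. The shift $|\mathbf{p}_{\mathfrak{u}}|_1+k=\mathbf{1}\cdot(\mathbf{p}_{\mathfrak{u}}+\mathbf{1})$ is what that literal substitution produces, and with it the identity is false in general for $\boldsymbol{\omega}_{\mathfrak{u}}\neq\mathbf{1}$. Take $d=k=1$, $p=0$, $\omega=2$, $L=1$ and $c=\nu-\alpha>0$. Since $\mathcal{A}_{1,2}=\{0\}$, the left-hand side is $C_{\textnormal{W}}^{(\nu,\alpha)}\sum_{l\geq1}2^{-cl}$. The printed right-hand side has level $\max\{0,1-0-1\}=0$ and equals $C_{\textnormal{W}}^{(\nu,\alpha)}\sum_{l\geq1}2^{-c(l+1)}$, which is smaller by $C_{\textnormal{W}}^{(\nu,\alpha)}2^{-c}$. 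More generally, for $\omega_j\geq1$ (the regime the paper recommends) the printed right-hand side is never larger than the left. It therefore fails even as the one-sided bound used in the proof of Theorem~\ref{thm: doubly anisotropic error}. So say plainly that you prove a corrected lemma, with level $L-\boldsymbol{\omega}_{\mathfrak{u}}\cdot(\mathbf{p}_{\mathfrak{u}}+\mathbf{1})$.

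The step that would actually fail is your boundary check: the $\max\{0,\cdot\}$ is inconsistent with the identity for every $\boldsymbol{\omega}$. Suppose $M\coloneqq L-\boldsymbol{\omega}_{\mathfrak{u}}\cdot(\mathbf{p}_{\mathfrak{u}}+\mathbf{1})<0$. Then every $\boldsymbol{l}'\in\mathbb{N}_0^k$, including $\boldsymbol{l}'=\mathbf{0}$, contributes on the left. But $\mathcal{A}_{0,\boldsymbol{\omega}_{\mathfrak{u}}}=\{\mathbf{0}\}$ because $\boldsymbol{\omega}>\mathbf{0}$, so truncating $M$ to $0$ deletes the positive term $C_{\textnormal{L}\ref{lem: supersets double}}^{(\boldsymbol{\nu}_{\mathfrak{u}},\boldsymbol{\alpha}_{\mathfrak{u}})}2^{-(\boldsymbol{\nu}_{\mathfrak{u}}-\boldsymbol{\alpha}_{\mathfrak{u}})\cdot(\mathbf{p}_{\mathfrak{u}}+\mathbf{1})}$. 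In the example above the weighted level is $-1$, and truncating it to $0$ reproduces exactly the wrong printed value. The same loss occurs for $\boldsymbol{\omega}=\mathbf{1}$ whenever $L<|\mathbf{p}_{\mathfrak{u}}|_1+k$.

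The fix is to drop the truncation. That $\mathcal{A}_{M,\boldsymbol{\omega}_{\mathfrak{u}}}=\emptyset$ for $M<0$ follows directly from the definition, with no convention needed, and it yields the full sum over $\mathbb{N}_0^k$. Your appeal to the convention $\varepsilon\coloneqq0$ for $L\leq0$ after \eqref{eq: anisotropic bound} points the wrong way: an empty index set gives the largest tail, not zero. So if you carry the corrected lemma into Theorem~\ref{thm: doubly anisotropic error}, that convention and the level $L-|\mathbf{p}_{\mathfrak{u}}|_1-k$ there need the same correction.
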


\begin{proof}
    This proof is almost identical to that of Lemma 5, \cite{addy2025lengthscaleinformedsparsegridskernel}, where the multi-index set \(\mathcal{I}_L^d\) is replaced by \(\mathcal{A}_{L,\boldsymbol{\omega}}\). The complete proof is given in \cite{addy2026thesis}.
\end{proof}

\begin{proof}[Proof of Theorem~\ref{thm: doubly anisotropic error}]
    Simply combining Lemmas~\ref{lem: tensor hilbert} and \ref{lem: supersets double} gives

    \begin{align}
    \begin{split}
    &\quad\left\lVert\, I-S_{\mathcal{A}_{L,\boldsymbol{\omega}},\mathbf{p}}^{(\boldsymbol{\nu},2^\mathbf{p})}\,\right\rVert_{\mathcal{N}_{\Phi_{\boldsymbol{\nu},2^{\mathbf{p}}}}(\Gamma^d)\rightarrow\mathcal{N}_{\Phi_{\boldsymbol{\alpha},2^{\mathbf{p}}}}(\Gamma^d)}\\
    &\leq C^{(\boldsymbol{\nu},\boldsymbol{\alpha})}_{\textnormal{L}\ref{lem: tensor hilbert}}\sum_{k=1}^d\sum_{\boldsymbol{l}\in\mathcal{K}_k^d\setminus \mathcal{A}_{L,\boldsymbol{\omega}_{\mathfrak{u}}}}\prod_{j=1}^d\mathbf{Q}_{\boldsymbol{\nu},\boldsymbol{\alpha}}(\boldsymbol{l},\mathbf{p})_j,
    \end{split}\nonumber\\
    &\leq C^{(\boldsymbol{\nu},\boldsymbol{\alpha})}_{\textnormal{L}\ref{lem: tensor hilbert}}\sum_{k=1}^d\sum_{\mathfrak{u}\in\mathcal{P}^d_k}C_{\textnormal{L}\ref{lem: supersets double}}^{(\boldsymbol{\nu}_{\mathfrak{u}},\boldsymbol{\alpha}_{\mathfrak{u}})}\sum_{\boldsymbol{l}\in\mathbb{N}_0^k\setminus\mathcal{A}_{\max\{0,L-|\mathbf{p}_{\mathfrak{u}}|-k\},\boldsymbol{\omega}_{\mathfrak{u}}}}2^{-(\boldsymbol{\nu}_{\mathfrak{u}}-\boldsymbol{\alpha}_{\mathfrak{u}})\cdot(\boldsymbol{l}+\mathbf{p}_{\mathfrak{u}}+\mathbf{1})},\nonumber\\
    &=C^{(\boldsymbol{\nu},\boldsymbol{\alpha})}_{\textnormal{L}\ref{lem: tensor hilbert}}\sum_{k=1}^d\sum_{\mathfrak{u}\in\mathcal{P}^d_k}2^{-(\boldsymbol{\nu}_{\mathfrak{u}}-\boldsymbol{\alpha}_{\mathfrak{u}})\cdot(\mathbf{p}_{\mathfrak{u}}+\mathbf{1})}C_{\textnormal{L}\ref{lem: supersets double}}^{(\boldsymbol{\nu}_{\mathfrak{u}},\boldsymbol{\alpha}_{\mathfrak{u}})}\sum_{\boldsymbol{l}\in\mathbb{N}_0^k\setminus\mathcal{A}_{\max\{0,L-|\mathbf{p}_{\mathfrak{u}}|-k\},\boldsymbol{\omega}_{\mathfrak{u}}}}2^{-(\boldsymbol{\nu}_\mathfrak{u}-\boldsymbol{\alpha}_{\mathfrak{u}})\cdot\boldsymbol{l}}.\nonumber
    \end{align}
    Substituting in the anisotropic sparse grid error bound \(\varepsilon^{(k)}_{\boldsymbol{\nu}_{\mathfrak{u}},\boldsymbol{\alpha}_{\mathfrak{u}},\boldsymbol{\omega}_{\mathfrak{u}}}(L-|\mathbf{p}_{\mathfrak{u}}|_1 -k)\) in \eqref{eq: anisotropic bound} gives us the statement of the theorem.
\end{proof}

\section{Numerical experiments}\label{sec: double experiments}


In Figure~\ref{fig: DASG vs LISG}, we numerically assess the performance of interpolants constructed on DASG designs against those defined on ISG, ASG and LISG designs, respectively. For target functions, we use realisations of random linear combinations of \(d\)-dimensional separable Mat\'ern kernels, defined by
\begin{align}
f_{\Phi,\boldsymbol{\nu},\mathbf{p}}\coloneqq\sum_{i=1}^{20}\xi_i\Phi_{\boldsymbol{\nu},2^{\mathbf{p}}}(\cdot,\mathbf{y}_i),\label{eq: d dim kernel function}
\end{align}
where \(\xi_{i}\sim\mathcal{N}(0,5)\) and \(\mathbf{y}_{i}\sim U(\Gamma^d)\) are i.i.d. for \(1\leq i\leq 20\). These are the same test functions used in \cite{addy2025lengthscaleinformedsparsegridskernel}, and are particularly useful in this setting since the anisotropic structure characterised by \(\boldsymbol{\nu}\) and \(\boldsymbol{\lambda}\) is known exactly \emph{a priori}. The relative \(L^2\)-errors presented are averaged over 10 realisations \(f\) of \(f_{\Phi,\boldsymbol{\nu},\mathbf{p}}\), and each error is approximated by 100 Monte Carlo samples. In all experiments, we consider only lengthscale-anisotropy in \(f\) specified by linearly growing penalties; \(\mathbf{p}=\mathbf{p}_{\textrm{lin}}=(0,1,\dots,d-1)\), corresponding to exponentially growing lengthscales in the dimension coordinate, \(j\). Two parametrisations for the smoothness anisotropy are considered: \(\boldsymbol{\nu}=(\mathbf{3}/2,\mathbf{5}/2)\in\mathbb{R}^{d}\) and  \(\boldsymbol{\nu}=(\mathbf{5}/2,\mathbf{3}/2)\in\mathbb{R}^{d}\), where \(\mathbf{5}/2,\mathbf{3}/2\in\mathbb{R}^{d/2}\). These correspond to two distinct regimes. In the first, both the regularity and lengthscales grow with \(j\), and in the second, the regularity decreases as the lengthscale increases. {We emphasise that, relative to the degree of anisotropy in the lengthscales, the anisotropy in \(\boldsymbol{\nu}\) mild; in particular, the smoothness in a given coordinate direction \(j\) never exceeds \(\nu_j=5/2\). 
The primary objective of the experiments is to investigate the extent to which the approximation error for ASG and LISG methods may be reduced by combining their respective methodologies. The focus is therefore on this interaction, rather than the potential dimension insensitivity that may arise from having the regularity increase in \(j\), as is the case in \cite{Nobile2008}, or dimension-independence as a consequence of lengthscale-anisotropy, which has been studied in \cite{addy2025lengthscaleinformedsparsegridskernel}}. To counteract the growth of the constants \(C_{\textnormal{L}\ref{lem: supersets double}}^{(\boldsymbol{\nu}_{\mathfrak{u}},\boldsymbol{\alpha}_{\mathfrak{u}})}\), tuning parameters \(\mathbf{r}=(\mathbf{0},\mathbf{2})\) with \(\mathbf{0},\mathbf{2}\in\mathbb{N}_0^{d/2}\) were used only in the case \(\boldsymbol{\nu}=(\mathbf{3}/2,\mathbf{5}/2)\).  Each kernel interpolant was evaluated using Algorithm~\ref{alg: 1} (or the corresponding version thereof) and constructed for increasing levels until either \(N\) exceeded \(10^5\) or {the corresponding Gram matrix ceased to be positive definite in double precision}. 

The most striking effect observed in Figure \ref{fig: DASG vs LISG} is the significantly lower error achieved by the methods employing kernels with anisotropic lengthscales (DASG and LISG) compared with those that do not (ASG and LISG). 
Notably, DASGs appear to inherit the favourable pre-asymptotic behaviour of LISGs.

In our numerical experiments, substantial differences were observed in the efficacy of DASGs depending on the orientation of the anisotropies. In particular, when both regularities and lengthscales grow with \(j\), highly inconsistent error convergence rates in \(N\) were observed already in \cite{addy2026thesis}, and was attributed to unaccounted-for growth in the constants \(C_{\textnormal{L}\ref{lem: supersets double}}^{(\boldsymbol{\nu}_{\mathfrak{u}},\boldsymbol{\alpha}_{\mathfrak{u}})}\). Supporting this interpretation, slightly reducing the penalties in the smoother directions in Figure \ref{fig: DASG vs LISG} via the tuning parameter \(\mathbf{r}\), as described above, substantially mitigates this effect. This selection of \(\mathbf{r}\) was informed by exploratory, non-exhaustive experimentation.

In the case \(d=4\), DASGs achieve an improved convergence rate compared to LISGs for both anisotropy configurations considered. For \(d=8\) and \(d=16\), however, the observed differences in the error between the two methods are minimal. This supports our emphasis on the importance of pre-asymptotic convergence in high dimensions, as discussed in Subsection~\ref{subsec: LISGs}, where it was suggested that the onset of the asymptotic regime becomes increasingly delayed as \(d\) increases. A clear advantage of DASGs over LISGs is that, by placing relatively fewer points in the smoothest directions, DASGs appear significantly more resilient to ill-conditioning of the associated Gram matrices. In practice, this often enables the construction of interpolants with a substantially larger number of interpolation points \(N\). 

Somewhat unexpectedly, ISGs consistently yield a smaller observed error than ASGs in this setting, most apparent when \(d=4\). This suggests that the onset of the asymptotic regime may also be delayed when the lengthscales \(\boldsymbol{\lambda}\), determined by the penalty \(\mathbf{p}\), are poorly chosen---here, taken to be significantly smaller than those of \(f_{\Phi,\boldsymbol{\nu},\mathbf{p}}\).

\begin{figure}[H]
    \begin{subfigure}{.45\textwidth}
    \begin{tikzpicture}[scale=0.75]
        \begin{axis}[
        cycle list name = myCycleList,
        grid = both,
        xmode = log, ymode = log,
        xmin = 0.8, xmax = 1e5,
        ymin = 1e+-10, ymax= 1,
        legend pos=south west,
        xticklabels={}
        ]
        \addplot table [x index=1,y index=0, col sep=space] {author/L2_error_dim_4_lin_p_same_orient_plus_2.txt};
        \addplot[
            only marks,
            mark=x,
            mark options={scale=3, line width = 2pt},
            color = darkgray,
            forget plot
        ] table [
            x index=1,
            y index=0,
            col sep=space,
            skip first n=19
        ] {author/L2_error_dim_4_lin_p_same_orient_plus_2.txt};
        \addplot table [x index=1,y index=0, col sep=space] {author/L2_error_dim_4_lin_p_same_orient_LISG.txt};
        \addplot[
            only marks,
            mark=x,
            mark options={scale=3, line width = 2pt},
            color = skyblue,
            forget plot
        ] table [
            x index=1,
            y index=0,
            col sep=space,
            skip first n=8
        ] {author/L2_error_dim_4_lin_p_same_orient_LISG.txt};
        \addplot table [x index=1,y index=0, col sep=space] {author/L2_error_dim_4_lin_p_same_orient_ASG.txt};
        \addplot table [x index=1,y index=0, col sep=space] {author/L2_error_dim_4_lin_p_same_orient_ISG.txt};

        \legend{DASG, LISG, ASG, ISG};
        \end{axis}
        \node at (3.5,6.5) {\Large\(\boldsymbol{\nu}=(\mathbf{3}/2,\mathbf{5}/2)\)};
        \node[rotate=90] at (-1.3,2.8) {\large Approx. \(L^2\)-Error};
    \end{tikzpicture}
    \end{subfigure}
    \hspace{10mm}
    \begin{subfigure}{.45\textwidth}
    \begin{tikzpicture}[scale=0.75]
        \begin{axis}[
        cycle list name = myCycleList,
        grid = both,
        xmode = log, ymode = log,
        xmin = 0.8, xmax = 1e5,
        ymin = 1e+-10, ymax= 1,
        legend pos=south west,
        xticklabels={}, yticklabels={}
        ]
        \addplot table [x index=1,y index=0, col sep=space] {author/L2_error_dim_4_lin_p_opposite_orient_plus_0.txt};
        \addplot table [x index=1,y index=0, col sep=space] {author/L2_error_dim_4_lin_p_same_orient_LISG.txt};
        \addplot[
            only marks,
            mark=x,
            mark options={scale=3, line width = 2pt},
            color = skyblue,
            forget plot
        ] table [
            x index=1,
            y index=0,
            col sep=space,
            skip first n=8
        ] {author/L2_error_dim_4_lin_p_opposite_orient_LISG.txt};
        \addplot table [x index=1,y index=0, col sep=space] {author/L2_error_dim_4_lin_p_opposite_orient_ASG.txt};
        \addplot table [x index=1,y index=0, col sep=space] {author/L2_error_dim_4_lin_p_same_orient_ISG.txt};
        
        \legend{DASG, LISG, ASG, ISG};
        \end{axis}
        \node at (3.5,6.5) {\Large$\boldsymbol{\nu}=(\mathbf{5}/2,\mathbf{3}/2)$};
        \node[rotate=-90] at (7.7,2.8) {\Large$d=4$};
    \end{tikzpicture}
    \end{subfigure}\\
    \begin{subfigure}{.45\textwidth}
    \begin{tikzpicture}[scale=0.75]
        \begin{axis}[
        cycle list name = myCycleList,
        grid = both,
        xmode = log, ymode = log,
        xmin = 0.8, xmax = 1e5,
        ymin = 1e+-10, ymax= 1,
        legend pos=south west,
        xticklabels = {}
        ]
        \addplot table [x index=1,y index=0, col sep=space] {author/L2_error_dim_8_lin_p_same_orient_plus_2.txt};
        \addplot table [x index=1,y index=0, col sep=space] {author/L2_error_dim_8_lin_p_same_orient_LISG.txt};
        \addplot[
            only marks,
            mark=x,
            mark options={scale=3, line width = 2pt},
            color = skyblue,
            forget plot
        ] table [
            x index=1,
            y index=0,
            col sep=space,
            skip first n=8
        ] {author/L2_error_dim_8_lin_p_same_orient_LISG.txt};
        \addplot table [x index=1,y index=0, col sep=space] {author/L2_error_dim_8_lin_p_same_orient_ASG.txt};
        \addplot table [x index=1,y index=0, col sep=space] {author/L2_error_dim_8_lin_p_same_orient_ISG.txt};

        \legend{DASG,LISG,ASG,ISG};
        \end{axis}
        \node[rotate=90] at (-1.3,2.8) {\large Approx. \(L^2\)-Error};
    \end{tikzpicture}
    \end{subfigure}
    \hspace{10mm}
    \begin{subfigure}{.45\textwidth}
    \begin{tikzpicture}[scale=0.75]
        \begin{axis}[
        cycle list name = myCycleList,
        grid = both,
        xmode = log, ymode = log,
        xmin = 0.8, xmax = 1e5,
        ymin = 1e+-10, ymax= 1,
        legend pos=south west,
        yticklabels = {}, xticklabels = {}
        ]
        \addplot table [x index=1,y index=0, col sep=space] {author/L2_error_dim_8_lin_p_opposite_orient_plus_0.txt};
        \addplot[
            only marks,
            mark=x,
            mark options={scale=3, line width = 2pt},
            color = darkgray,
            forget plot
        ] table [
            x index=1,
            y index=0,
            col sep=space,
            skip first n=24
        ] {author/L2_error_dim_8_lin_p_opposite_orient_plus_0.txt};
        \addplot table [x index=1,y index=0, col sep=space] {author/L2_error_dim_8_lin_p_opposite_orient_LISG.txt};
        \addplot[
            only marks,
            mark=x,
            mark options={scale=3, line width = 2pt},
            color = skyblue,
            forget plot
        ] table [
            x index=1,
            y index=0,
            col sep=space,
            skip first n=8
        ] {author/L2_error_dim_8_lin_p_opposite_orient_LISG.txt};
        \addplot table [x index=1,y index=0, col sep=space] {author/L2_error_dim_8_lin_p_opposite_orient_ASG.txt};
        \addplot table [x index=1,y index=0, col sep=space] {author/L2_error_dim_8_lin_p_same_orient_ISG.txt};
        
        \legend{DASG,LISG,ASG,ISG};
        \end{axis}
        \node[rotate=-90] at (7.7,2.8) {\Large$d=8$};
    \end{tikzpicture}
    \end{subfigure}\\
    \begin{subfigure}{.45\textwidth}
    \begin{tikzpicture}[scale=0.75]
        \begin{axis}[
        cycle list name = myCycleList,
        grid = both,
        xmode = log, ymode = log,
        xmin = 0.8, xmax = 1e5,
        ymin = 1e+-10, ymax= 1,
        xlabel={\Large$N$},
        legend pos=south west
        ]
        \addplot table [x index=1,y index=0, col sep=space] {author/L2_error_dim_16_lin_p_same_orient_plus_2.txt};
        \addplot table [x index=1,y index=0, col sep=space] {author/L2_error_dim_16_lin_p_same_orient_LISG.txt};
        \addplot[
            only marks,
            mark=x,
            mark options={scale=3, line width = 2pt},
            color = skyblue,
            forget plot
        ] table [
            x index=1,
            y index=0,
            col sep=space,
            skip first n=9
        ] {author/L2_error_dim_16_lin_p_same_orient_LISG.txt};
        \addplot table [x index=1,y index=0, col sep=space] {author/L2_error_dim_16_lin_p_same_orient_ASG.txt};
        \addplot table [x index=1,y index=0, col sep=space] {author/L2_error_dim_16_lin_p_same_orient_ISG.txt};

        \legend{DASG, LISG, ASG, ISG};
        \end{axis}
        \node[rotate=90] at (-1.3,2.8) {\large Approx. \(L^2\)-Error};
    \end{tikzpicture}
    \end{subfigure}
    \hspace{10mm}
    \begin{subfigure}{.45\textwidth}
    \begin{tikzpicture}[scale=0.75]
        \begin{axis}[
        cycle list name = myCycleList,
        grid = both,
        xmode = log, ymode = log,
        xmin = 0.8, xmax = 1e5,
        ymin = 1e+-10, ymax= 1,
        xlabel={\Large$N$},
        legend pos=south west,
        yticklabels = {}
        ]
        \addplot table [x index=1,y index=0, col sep=space] {author/L2_error_dim_16_lin_p_opposite_orient_plus_0.txt};
        \addplot[
            only marks,
            mark=x,
            mark options={scale=3, line width = 2pt},
            color = darkgray,
            forget plot
        ] table [
            x index=1,
            y index=0,
            col sep=space,
            skip first n=20
        ] {author/L2_error_dim_16_lin_p_opposite_orient_plus_0.txt};
        \addplot table [x index=1,y index=0, col sep=space] {author/L2_error_dim_16_lin_p_opposite_orient_LISG.txt};
        \addplot[
            only marks,
            mark=x,
            mark options={scale=3, line width = 2pt},
            color = skyblue,
            forget plot
        ] table [
            x index=1,
            y index=0,
            col sep=space,
            skip first n=8
        ] {author/L2_error_dim_16_lin_p_opposite_orient_LISG.txt};
        \addplot table [x index=1,y index=0, col sep=space] {author/L2_error_dim_16_lin_p_opposite_orient_ASG.txt};
        \addplot table [x index=1,y index=0, col sep=space] {author/L2_error_dim_16_lin_p_same_orient_ISG.txt};

        \legend{DASG, LISG, ASG, ISG};
        \end{axis}
        \node[rotate=-90] at (7.7,2.8) {\Large$d=16$};
    \end{tikzpicture}
    \end{subfigure}
    \caption{Mean \(L^2\)-approximation error of separable Mat\'ern kernel interpolants of realisations of \(f_{\Phi,\boldsymbol{\nu},\mathbf{p}}\), constructed on sparse grid designs of increasing level, \(L\). Both doubly anisotropic sparse grid (DASG) and lengthscale-informed sparse grid (LISG) methods employ kernels with anisotropic lengthscales, specified by \(\boldsymbol{\lambda}=2^{\mathbf{p}_{\textrm{lin}}}\), whereas the anisotropic sparse grid (ASG) and isotropic (ISG) methods use \(\boldsymbol{\lambda}=\mathbf{1}\). All kernels correctly specify the smoothness anisotropy via \(\boldsymbol{\nu}\). {An \(\times\) indicates that the Gram matrix of the subsequent interpolant was non positive-definite in double precision.}
    }
    \label{fig: DASG vs LISG}
\end{figure}
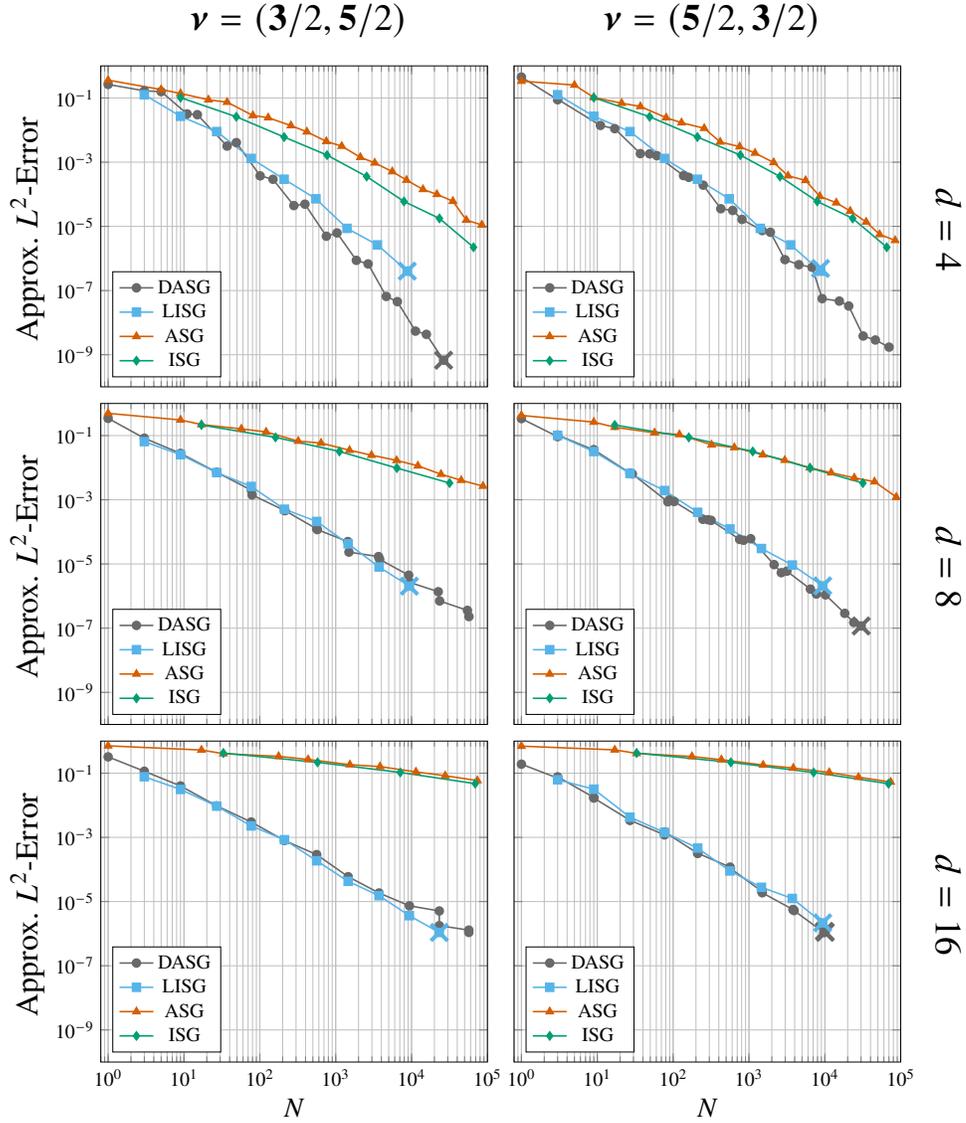

\begin{acknowledgement}
EJA was supported by Biomathematics and Statistics Scotland (BioSS) and the EPSRC Centre
for Doctoral Training in Mathematical Modelling, Analysis and Computation (MAC-MIGS) funded by
the EPSRC (EP/S023291/1), Heriot-Watt University, and The University of Edinburgh. {ALT was partially
supported by EPSRC grants EP/X01259X/1 and EP/Y028783/1.}
\end{acknowledgement}

\printbibliography

\end{document}